\newtheorem{thm}{Theorem}[section]
\newtheorem{cor}[thm]{Corollary}
\newtheorem{lem}[thm]{Lemma}
\newtheorem{lemma}[thm]{Lemma}
\newtheorem{prop}[thm]{Proposition}
\newtheorem{remark}[thm]{Remark}
\newtheorem{defn}[thm]{Definition}
\newtheorem{question}[thm]{Question}
\newtheorem{example}[thm]{Example}
\newcommand{\Q}{\mathbb{Q}}
\newcommand{\Z}{\mathbb{Z}}
\newcommand{\N}{\mathbb{N}}
\newcommand{\rk}{\operatorname{rank}}
\newcommand{\dom}[1]{\text{dom}(#1)}
\newcommand{\tfa}{\operatorname{TFAb}}
\newcommand{\TFAb}{\operatorname{TFAb}}
\newcommand{\FD}{\operatorname{FD}}
\newcommand{\TD}{\operatorname{FD}}
\newcommand{\set}[2]{\ensuremath{ \{ #1 : #2 \} }}
\newcommand{\bfd}{\boldsymbol{d}}
\newcommand{\xvec}{\vec{x}}
\newcommand{\A}{\mathcal{A}}
\newcommand{\B}{\mathcal{B}}
\newcommand{\C}{\mathcal{C}}
\newcommand{\Ftilde}{\widetilde{F}}
\newcommand{\Gtilde}{\widetilde{G}}
\newcommand{\Htilde}{\widetilde{H}}
\newcommand{\Xtilde}{\tilde{X}}
\newcommand{\la}{\langle}
\newcommand{\ra}{\rangle}
\newcommand{\Iso}[1]{\text{Iso}(#1)}
\newcommand{\comment}[1]{}
\newenvironment{pf}{\begin{trivlist}\item[\hskip\labelsep
{\it Proof.}]}{\end{trivlist}}
\date{\today}
\begin{document} 

\title[$\tfa_r$ and $\TD_r$]{Torsion-free abelian groups of finite rank and fields of finite transcendence degree}

\author{Meng-Che ``Turbo'' Ho \and Julia Knight \and Russell Miller}
\thanks{This material is based on work supported by the National Science Foundation under Grant \#DMS-1928930 while the authors participated in programs hosted by the Mathematical Sciences Research Institute in Berkeley, California, during Fall 2020 and Summer 2022.}
\thanks{The first author acknowledges support from the National Science Foundation under Grant \#DMS-2054558.}
\thanks{The second author acknowledges support from the National Science Foundation under Grant \#DMS-1800692.}
\thanks{The third author acknowledges support from the Simons Foundation under Grant \#581896,
and from The City University of New York PSC-CUNY Research Award Program under under Grant \#66379-00 54.}

\maketitle 

\begin{abstract}

Let $\TFAb_r$ be the class of torsion-free abelian groups of rank $r$, and let $\TD_r$ be the class of fields of characteristic $0$ and transcendence degree~$r$.  
We compare these classes using various notions. Considering Scott complexity of the structures in the classes and the complexity of the isomorphism relations on the classes, the classes seem very similar. Hjorth and Thomas showed that the $\TFAb_r$ are strictly increasing under Borel reducibility.  This is not so for the classes $\TD_r$.  Thomas and Velickovic showed that for sufficiently large $r$, the classes $\TD_r$ are equivalent under Borel reducibility.  We try to compare the groups with the fields, using Borel reducibility, and also using some effective variants.  We give functorial Turing computable embeddings of $\TFAb_r$ in $\TD_r$, and of $\TD_r$ in $\TD_{r+1}$.  We show that under computable countable reducibility, $\TFAb_1$ lies on top among the classes we are considering.  In fact, under computable countable reducibility, isomorphism on $\TFAb_1$ lies on top among equivalence relations that are effective $\Sigma_3$, along with the Vitali equivalence relation on $2^\omega$.  
\end{abstract}         

\section{Introduction}
\label{sec:intro}

There are substantial similarities between the class $\TFAb$ of torsion-free abelian groups of finite rank
and the class $\TD$ of fields of characteristic $0$ having finite transcendence degree over $\Q$.
Both of these well-studied classes consist of countable structures. Except for the trivial group, which we ignore, all are infinite. Hence, we may suppose that the universe of each structure is $\omega$.  For each class, there is a dependence notion such that the size of a maximal independent set or \emph{basis} is well-defined.  Each structure is determined, up to isomorphism, by the existential type of a basis. The existential type of the basis says which rational linear combinations are present (in the group), or which polynomials have roots (in the field). 

We are particularly interested in the subclasses of $\TFAb$ and $\TD$ for which the size of a basis is fixed.  We write $\TFAb_r$ for the class of torsion-free abelian groups of rank $r$, and $\TD_r$ for the class of fields of characteristic $0$ and transcendence degree~$r$.  Each of these sub-classes has a universal structure.  The elements of $\TFAb_r$ are precisely the isomorphic copies of the rank $r$ subgroups of the additive group $\Q^r$.  The elements of $\TD_r$ are precisely the isomorphic copies of degree $r$ subfields of $\overline{\Q(t_1,\ldots,t_r)}$, where $\Q(t_1,\ldots,t_r)$ is the purely transcendental extension of $\Q$ of degree $r$, and $\overline{Q(t_1,\ldots,t_r)}$ is its algebraic closure.     

The class $\TFAb$ and the class $\TD$ share many computable-structure-theoretic properties.  With one exception, the degree spectra for structures in the two classes are the same---they are the degrees of sets $C$ that are c.e.\ relative to some fixed $d$.   The lone exception is the 
trivial group $\{0\}$, which is the unique group in $\TFAb_0$; for fields, in contrast,
$\TD_0$ contains uncountably many fields, all of which follow the rule above. 
The proposition below gives the complexities of the isomorphism relations on the subclasses $\TFAb_r$ and $\TD_r$.  We say only a little about the proof. 

\begin{prop}\

\begin{enumerate}

\item  For $\TFAb_0$, the isomorphism relation is trivial.

\item  For $\TD_0$, the isomorphism relation is effective $\Pi_2$.

\item  For $\TFAb_r$ and $\TD_r$, where $r \ge 1$, the isomorphism relation is effective $\Sigma_3$.

\end{enumerate}

\end{prop}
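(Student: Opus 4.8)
The plan is to treat the three items in turn, with (1) immediate, (2) reducing to the complexity of equality of $\Sigma_1$-theories, and (3) resting on a single idea. For (1): a torsion-free abelian group of rank $0$ has empty basis, so every element is torsion and hence equal to $0$; thus $\TFAb_0$ consists, up to isomorphism, of the single group $\{0\}$, and the isomorphism relation is trivial. For (2): a field in $\TD_0$ is exactly an algebraic extension of $\Q$, so its transcendence basis is empty, and by the principle recalled in the introduction --- each structure is determined up to isomorphism by the existential type of a basis --- such a field $K$ is determined up to isomorphism by the existential type of the empty tuple, i.e.\ by its $\Sigma_1$-theory (which polynomials over $\Q$ have roots). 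Hence $K \cong L$ iff $K$ and $L$ have the same $\Sigma_1$-theory, and this last relation is effective $\Pi_2$: for a fixed $\Sigma_1$ sentence $\sigma$, each of $K \models \sigma$ and $L \models \sigma$ is c.e.\ relative to $K \oplus L$, so ``$K \models \sigma \leftrightarrow L \models \sigma$'' is a Boolean combination of c.e.\ conditions, hence $\Delta_2$ uniformly in $\sigma$, and the quantifier $\forall \sigma$ gives $\Pi_2$. One still owes the back-and-forth argument that equality of $\Sigma_1$-theories really forces isomorphism for algebraic fields; the decisive feature here --- and the reason (2) sits below (3) --- is that every finitely generated subfield is finite-dimensional over $\Q$, so the back-and-forth can be run with partial isomorphisms between finite-dimensional subfields, each determined by finitely much data, and there is no need for an existential quantifier over bases (the transcendence basis being empty and unique).

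For (3), the key point is that an isomorphism between two rank-$r$ torsion-free abelian groups (resp.\ two transcendence-degree-$r$ fields) is determined by finitely much data --- the images of a fixed basis (resp.\ transcendence basis) --- so that the set-quantifier ``$\exists$ isomorphism'', which a priori lands us at $\Sigma^1_1$, can be replaced by a number quantifier over finite tuples. For $G, H \in \TFAb_r$ one has $G \cong H$ iff there are a basis $\bar a$ of $G$ and a basis $\bar b$ of $H$ with
\[ q_1 a_1 + \dots + q_r a_r \in G \iff q_1 b_1 + \dots + q_r b_r \in H \qquad \text{for all } (q_1,\dots,q_r) \in \Q^r, \]
since the $\Q$-linear map $a_i \mapsto b_i$ then restricts to an isomorphism $G \to H$, and conversely every isomorphism arises this way. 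Here ``$\bar a$ is a basis of $G$'' is $\Pi_2$ ($\Q$-linear independence is $\Pi_1$, maximality is $\Pi_2$), and for each fixed $(q_1,\dots,q_r)$ the displayed biconditional is, after clearing denominators, an equivalence of two $\Sigma_1$ statements, hence $\Delta_2$; so its universal closure over $\Q^r$ is $\Pi_2$, and prefixing the number-quantifiers $\exists \bar a\, \exists \bar b$ puts $\cong$ at the effective $\Sigma_3$ level.

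The case $\TD_r$ is entirely parallel: $F \cong F'$ iff there are transcendence bases $\bar t$ of $F$ and $\bar t'$ of $F'$ and a $\Q$-algebra isomorphism $\sigma \colon \Q(\bar t) \to \Q(\bar t')$ --- specified by finitely many rational functions, hence found by a number search --- such that for every polynomial $p$ over $\Q(\bar t)$, $p$ has a root in $F$ iff $\sigma(p)$ has a root in $F'$. As before, ``$\bar t$ is a transcendence basis'' is $\Pi_2$, the matching condition is $\Pi_2$ (for each $p$ it is an equivalence of two $\Sigma_1$ statements), and the prefix $\exists \bar t\, \exists \bar t'\, \exists \sigma$ yields effective $\Sigma_3$. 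In both cases, for $r \ge 1$, it is precisely the existential quantifier over bases that raises the complexity from the $\Pi_2$ of item (2) to $\Sigma_3$.

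The one place I expect real work is the point deferred in (2): verifying that isomorphism of arbitrary algebraic extensions of $\Q$ is genuinely captured by equality of $\Sigma_1$-theories, equivalently that the back-and-forth through finite-dimensional subfields always succeeds and no subtler invariant is needed. The rest of (1)--(3) is routine bookkeeping with quantifiers in the arithmetical hierarchy; if one also wants the classifications in (3) to be sharp, that $\cong$ there is properly $\Sigma_3$ can be extracted already from $\TFAb_1$ via the familiar analysis of Baer types.
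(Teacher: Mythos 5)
Your proposal is correct and follows essentially the same route as the paper's (partial) proof: isomorphism on $\TD_0$ is equated with equality of existential theories (effective $\Pi_2$), and for $r\ge 1$ isomorphism is expressed by an existential quantifier over $r$-element bases whose existential types agree, giving effective $\Sigma_3$; your extra quantifier over the map $\sigma$ in the field case is harmless but unnecessary. The one fact you defer---that for algebraic extensions of $\Q$ (and, relativized, for the algebraic part over a transcendence basis) agreement of existential types really yields an isomorphism---is likewise asserted without proof in the paper, which points to \cite{HKMS} for the underlying argument.
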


\begin{proof} [Partial proof]

(2)  Fields in $\FD_0$ are isomorphic if and only if they satisfy the same existential sentences.  This is effective $\Pi_2$.  

(3)  For $\TFAb_r$ and $\FD_r$ for $r > 0$, the isomorphism relation is defined by a computable $\Sigma_3$ formula saying that there are bases of size $r$ for the two structures such that the existential formulas true of the bases are the same.  
\end{proof} 

The results above are sharp.  In \cite[Cor.\ 2.8]{HKMS}, it is shown that the set of pairs of indices for isomorphic computable structures in $\FD_0$ is $\Pi^0_2$.  (To see that this is $1$-complete at this level, just notice that
when $\{ W_e\}_{e\in\omega}$ is the usual effective listing of c.e.\ sets and $p_0<p_1<\cdots$ are the primes,
$W_i=W_j$ just if $\Q(\sqrt{p_n}~:~n\in W_i) \cong \Q(\sqrt{p_n}~:~n\in W_j)$.)
In Section 6, we will show more.  The isomorphism relation on $\FD_0$ is complete effective $\Pi_2$, and complete $\mathbf{\Pi_2}$, under reducibilities appropriate for the effective Borel, and Borel hierarchies.  Similarly, for the classes $\TFAb_r$ and $\FD_r$ for $r > 0$, we will show that the isomorphism relation is complete effective $\Sigma_3$, and complete $\mathbf{\Sigma_3}$.  

\bigskip 

In her PhD thesis, Alvir \cite{Alvir} generalized the notion of finitely generated structure.

\begin{defn} [Alvir]
\label{defn:alphafingen}

A structure $\A$ is \emph{$\alpha$-finitely generated} if there is a finite tuple $\bar{a}$ such that
for all tuples $\bar{b}$ from $\A^{<\omega}$, the orbit of $\bar{b}$ over $\bar{a}$ is defined by an
infinitary $\Sigma_\alpha$ formula.  

\end{defn}

In the classes $\TFAb_r$ and $\FD_r$, all structures are $1$-finitely generated,
with any basis serving as $\bar a$.  The orbit of a tuple $\bar b$ \emph{over the basis $\bar{a}$} is defined by an existential formula.  For the groups in $\TFAb$, each element $b$ is actually defined
over a basis $\bar a$ by a quantifier-free formula of the form $n\cdot b = \sum m_ia_i$
with integer coefficients $n,m_i$.  In $\TD$, however, existential quantifiers are required,
and finitely many distinct tuples $\bar b$ can realize the same existential type over $\bar a$.
In both classes, viewing the structures as subgroups of $\Q^r$ or subfields
of $\overline{\Q(t_1,\ldots,t_d)}$ endows them (as a class) with computability-theoretic
properties different from those they possess as free-standing structures.  In particular, the relations of linear independence (for the groups) and algebraic independence (for the fields) are uniformly decidable.  In the setting of free-standing structures (which we use), independence is decidable
from the atomic diagram of the structure, but not uniformly so, as one needs
to know a basis for the structure.  (The exceptions are
$\TFAb_0$ and $\TD_0$, where every basis is empty,
and $\TFAb_1$, where every nonzero singleton is a basis).

There are differences among classes $\TFAb_r$ for different $r$ that are not accounted for by the complexity of the Scott sentences or the isomorphism relation.  There are ``invariants'' for $\TFAb_1$ that are widely accepted as useful.  This is not the case for $\TFAb_r$ for $r > 1$.  Hjorth \cite{H} and Thomas \cite{T} used the notion of Borel embedding (see Definition \ref{defn:Borelembedding} below) to say in a precise way that the complexity of the invariants increases with $r$.

\begin{thm} [Hjorth, Thomas]
\label{thm:HjorthThomas}

For each $r\geq 1$, $\TFAb_r <_B \TFAb_{r+1}$.

\end{thm}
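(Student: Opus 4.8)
The plan is to follow Hjorth~\cite{H} and Thomas~\cite{T}, splitting the claim into the easy \emph{upper} half $\TFAb_r\le_B\TFAb_{r+1}$ and the hard \emph{lower} half $\TFAb_{r+1}\not\le_B\TFAb_r$. For the upper half I would send a rank-$r$ group $G$ to $G\oplus\Q$, which has rank $r+1$; on codes this is visibly Borel, indeed functorial and Turing computable. It is a reduction because $\Q$-cancellation holds among finite-rank torsion-free groups: each such group is uniquely $\Q^k\oplus G'$ with $G'$ reduced, and from $G_1\oplus\Q\cong G_2\oplus\Q$ one recovers the divisible rank $k$ by taking maximal divisible subgroups and recovers the reduced part $G'$ by taking the quotient. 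Hence $G_1\oplus\Q\cong G_2\oplus\Q$ iff $G_1\cong G_2$, the forward direction being trivial.

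For the lower half I would reach for superrigidity, in the style Adams and Kechris developed for $S$-arithmetic groups. The first step is a recoding: once a basis is chosen, every $G\in\TFAb_n$ becomes a full subgroup of $\Q^n$; two full subgroups are isomorphic exactly when some element of $GL_n(\Q)$ carries one onto the other; and a full subgroup is determined by its family of localizations $\Z_{(p)}\otimes G$, that is, by a $\Z_p$-lattice in $\Q_p^n$ for each prime $p$. So isomorphism on $\TFAb_n$ is Borel bireducible with the orbit equivalence relation $E_n$ of the diagonal $GL_n(\Q)$-action on a suitable Borel space of families of local lattices, carrying a natural quasi-invariant measure class.

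Now suppose, for contradiction, that $E_{r+1}\le_B E_r$. Restricting to a conull invariant set, the action of an $S$-arithmetic group such as $SL_{r+1}(\Z[1/p])$ --- a higher-rank lattice in $SL_{r+1}(\R)\times SL_{r+1}(\Q_p)$, chosen so the action is ergodic and essentially free --- falls under Zimmer's cocycle superrigidity theorem. A Borel reduction then yields a Borel cocycle into $GL_r(\Q)$ which, after the usual measure-theoretic and adelic bookkeeping, is cohomologous to one coming from an algebraic homomorphism $GL_{r+1}\to GL_r$. But any such homomorphism factors through the determinant, its kernel containing $SL_{r+1}$, so the cocycle is far too degenerate to be implemented by an honest reduction: one concludes that a non-null set of distinct $E_{r+1}$-classes would be collapsed, a contradiction. (For the base case $r=1$ there is also a soft argument: $GL_1(\Q)=\Q^\times$ is abelian, so $E_1$ is hyperfinite, whereas $E_2$ is not, by Hjorth.)

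The step I expect to be the real obstacle is the passage from a bare Borel reduction of countable Borel equivalence relations to an actual algebraic homomorphism of algebraic groups. This demands the correct ergodic-theoretic model for isomorphism on $\TFAb_n$ --- the right invariant measure class on the space of rank-$n$ groups, together with ergodicity and essential freeness of the relevant higher-rank $S$-arithmetic actions --- and then the full force of cocycle superrigidity and a careful clean-up of the resulting cocycle over all places. By comparison, classifying the homomorphisms $GL_{r+1}\to GL_r$ and running the degeneracy count are routine. This recoding-plus-superrigidity package is precisely what \cite{H} and \cite{T} provide, and I would not expect to improve on it.
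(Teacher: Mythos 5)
The paper does not actually prove this theorem: it states it as a citation of Hjorth \cite{H} and Thomas \cite{T}, observing only that the non-strict half $\TFAb_r\leq_B\TFAb_{r+1}$ is witnessed by the simple map $G\mapsto G\times\Z$ and that strictness rests on deep descriptive-set-theoretic results in those papers. Your proposal takes essentially the same stance: your $G\mapsto G\oplus\Q$ map, justified via the divisible/reduced decomposition, is a perfectly good variant of the paper's easy half (arguably cleaner than $\Z$-cancellation), and your superrigidity outline of the hard half is a reasonable summary of, but explicitly a deferral to, the arguments of \cite{H} and \cite{T} --- which is exactly what the paper itself does.
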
  

The fact that $\TFAb_r \leq_B\TFAb_{r+1}$ simply says that there is a Borel function $F$
that, given the atomic diagram of any $G\in\TFAb_{r+1}$, produces the atomic diagram
of some group $F(G)\in\TFAb_r$, in such a way that $G_0\cong G_1$ if and only if
$F(G_0)\cong F(G_1)$.  That is, the isomorphism problem for $\TFAb_{r}$ is
\emph{Borel-reducible} to that for $\TFAb_{r+1}$.  This is not surprising; indeed,
a straightforward computable function $F(G) = G\times \Z$ can accomplish this task.  However,
the result of Hjorth and Thomas gives strict Borel reducibility
$\TFAb_r <_B \TFAb_{r+1}$, meaning that there is no Borel reduction in the
opposite direction:  $\TFAb_{r+1} \not\leq_B\TFAb_r$.  The proof uses deep results from descriptive set theory.  
Hjorth and Thomas mentioned the case of fields of finite transcendence degree, but did not
address it to any significant extent.  Thomas and Velickovic \cite{TV} have shown that the classes $\TD_n$ are not strictly increasing under Borel reducibility.  There is some (fairly small) $n$ such that for all $m$, $\TD_m\leq_B \TD_n$.      

Our purpose in this article is both to consider the parallel questions for the different ranks $\TD_r$ of fields of finite transcendence degree, and to apply the notions of computable reducibility that were
subsequently developed in \cite{CCKM} and \cite{MPSS18}.  We will show
that for each $r$, there is a Turing computable reduction from $\FD_r$ to $\FD_{r+1}$.  (This is not nearly so simple as it was for $\TFAb$, as we discuss in Section \ref{sec:fieldsup}.)
We will also give, in Section \ref{sec:groupstofields}, a Turing computable reduction
from each $\TFAb_r$ to the corresponding $\TD_r$.  All of these reductions
will in fact be \emph{functorial}, a particularly strong type of Turing computable 
reduction
that we describe in Section \ref{sec:reducibility} after giving history and technical details
about Borel and Turing computable reductions.

By results of Hjorth \cite{H} and Thomas \cite{T}, there is no Borel reduction from $\TFAb_r$ to $\TFAb_{r'}$ for $r > r'$.  We do not know a specific Borel reduction from $\TD_r$ to $\TD_{r'}$ for $r > r'$, but results of Thomas and Velickovic imply that for all sufficiently large $r$, the classes $\TD_r$ are $\equiv_B$-equivalent. 
While the previously mentioned results imply that for large enough $d$, there is no Borel reduction from $\FD_d$ to $\TFAb_r$, we do not yet know a Borel reduction from any $\TD_d$ with $d > 0$ to any $\TFAb_r$.
However, we will show that under \emph{countable computable reducibility}, all effective $\Sigma_3$ equivalence relations on $2^\omega$ reduce to $\TFAb_r$.  Isomorphism on 
$\TFAb_r$, for $r\geq 1$, and $\TD_r$ are effective $\Sigma_3$. The notion of countable computable reducibility was introduced in~\cite{MSEALS}.  






\subsection*{Acknowledgment}
The authors would like to thank Phillip Dittman, Matthew Harrison-Trainor, Vincent Ye, and Arno Fehm for a useful discussion that 
led to the proof in Section \ref{sec:fieldsup}.

\section{Borel and Turing computable reducibility}
\label{sec:reducibility}

For a countable language $L$, $Mod(L)$ is the set of all $L$-structures with universe~$\omega$.  For convenience, we may suppose that $L$ consists of relation symbols.  We can identify $Mod(L)$ with $2^\omega$.  Let $(\alpha_n)_{n\in\omega}$ be an enumeration of the sentences of form $R(\bar{a})$, where $R$ is a relation symbol of $L$ and $\bar{a}$ is an appropriate tuple of natural numbers.  We identify the structure $\mathcal{A}\in Mod(L)$ with the function $f\in 2^\omega$ such that
$$f(n) = \left\{\begin{array}{ll}
1 & \mbox{if $\mathcal{A}\models\alpha_n$}\\
0 & \mbox{otherwise.}
\end{array}\right.$$

We have the usual topology on $2^\omega$, and on $Mod(L)$.  The basic clopen sets in $Mod(L)$ have the form $Mod(\varphi) = \set{\mathcal{A}\in Mod(L)}{\mathcal{A}\models \varphi}$, where $\varphi$ is finitary and quantifier-free, in the language $L$ with added constants for the natural numbers.  The Borel sets are the members of the $\sigma$-algebra generated by the basic clopen sets.  The Borel sets may be obtained from the basic clopen sets by closing under countable unions and intersections.  The \emph{Borel hierarchy} classifies Borel sets as $\mathbf{\Sigma_\alpha}$ or $\mathbf{\Pi_\alpha}$ for countable ordinals $\alpha$.  The \emph{effective Borel sets} are obtained from the basic clopen sets by taking c.e.\ unions and intersections.  The \emph{effective Borel hierarchy} classifies sets as $\Sigma_\alpha$ or $\Pi_\alpha$ for computable ordinals $\alpha$.  Recall that the $L_{\omega_1\omega}$ formulas allow countably infinite disjunctions and conjunctions but only finite strings of quantifiers.  The formulas are classified as $\Sigma_\alpha$ or $\Pi_\alpha$ for countable ordinals $\alpha$.
The \emph{computable infinitary formulas} are formulas of $L_{\omega_1\omega}$ in which the disjunctions and conjunctions are over c.e.\ sets.            

Fixing a language $L$, we consider classes $K\subseteq Mod(L)$ such that $K$ is closed under isomorphism; i.e., $K$ is closed under the action of the permutation group $S_\infty$ on $\omega$.  Lopez-Escobar \cite{Lopez-Escobar} showed that such a class is Borel if and only if it is axiomatized by a sentence of $L_{\omega_1\omega}$.    
Vaught \cite{Vaught} showed that for any countable ordinal $\alpha\geq 1$ and any class $K\subseteq Mod(L)$ (closed under isomorphism), $K$ is $\mathbf{\Sigma_\alpha}$ if and only if it is axiomatized by a $\Sigma_\alpha$ sentence of $L_{\omega_1\omega}$.  Vanden Boom \cite{VdB} proved the effective version of Vaught's Theorem, saying that for any computable ordinal $\alpha\geq 1$, a class $K\subseteq Mod(L)$ (closed under isomorphism) is effective $\Sigma_\alpha$ if and only if it is axiomatized by a computable $\Sigma_\alpha$ sentence.    

We have the usual product topology on $2^{\omega}\times 2^{\omega}$, and on $Mod(L)\times Mod(L')$,
so we may consider Borel relations on $Mod(L)\times Mod(L')$ and Borel functions from $Mod(L)$ to $Mod(L')$.  
Friedman and Stanley \cite{FS} introduced the notion of Borel embedding as a precise way
to compare the problems of classifying, up to isomorphism, the members of different classes of countable structures.  

\begin{defn} [Friedman--Stanley]
\label{defn:Borelembedding}
Suppose $K\subseteq Mod(L)$, $K'\subseteq Mod(L')$ are closed under isomorphism.  A \emph{Borel embedding} of $K$ in $K'$ is a Borel function $\Phi:K\rightarrow K'$ such that for $\mathcal{A},\mathcal{B}\in K$, $\mathcal{A}\cong\mathcal{B}$ if and only if $\Phi(\mathcal{A})\cong\Phi(\mathcal{B})$.  We say that class $K$ is \emph{Borel embeddable in} the class $K'$, and we write \emph{$K \leq_B K'$}, if there is such an embedding.        

\end{defn}

Friedman and Stanley gave a number of examples involving familiar classes of structures.  In particular, they showed that fields and linear orderings lie on top under $\leq_B$.  (This means that each of these classes is maximal under $\leq_B$ among Borel classes of $L$-structures for every countable language $L$.)  Using known results, they obtained the fact that $2$-step nilpotent groups lie on top, but showed that abelian $p$-groups do not.  They asked whether the class of torsion-free abelian groups lies on top. After quite some time, Shelah and Paolini \cite{PS} gave an affirmative answer. Independently and around the same time, Laskowski and Ulrich \cite{LU, LU2} provided an alternative proof by showing a more general result that certain classes of countable $R$-modules lie on top.

Kechris suggested that it would be good to develop an effective version of the notion of Borel embedding. The definition below is from \cite{CCKM}. That paper includes results on classes of finite structures, and the structures are allowed to have universe a proper subset of $\omega$.  As we said earlier, our structures will have universe~$\omega$. 

\begin{defn}
\label{defn:TCembedding}
Suppose $K\subseteq Mod(L), K'\subseteq Mod(L')$ are closed under isomorphism, and $L,L'$ are computable languages.  Suppose that the classes $K,K'$ are both closed under isomorphism. A \emph{Turing-computable embedding} (or \emph{$tc$-embedding}) of $K$ in $K'$ is a Turing operator $\Phi:K\rightarrow K'$ such that $\mathcal{A}\cong\mathcal{B}$ if and only if $\Phi(\mathcal{A})\cong\Phi(\mathcal{B})$.   If there is such an embedding, we write $K\leq_{tc} K'$.  The embedding is called a \emph{$tc$-reduction} from $K$ to $K'$.
\end{defn}

The proof that an operator is one-to-one on isomorphism types often involves showing that the input structure is interpreted (in a uniform way) in the output structure.  
In \cite{M14}, Montalb\'{a}n defined a very general notion of effective interpretation,
in which the interpreting formulas have no fixed arity.  A \emph{generalized} computable $\Sigma_1$
formula is a c.e.\ disjunction of existential formulas, possibly of different arities.
For example, there is a generalized computable $\Sigma_1$ formula that defines dependence
in $\mathbb{Q}$-vector spaces of tuples of all finite lengths.

\begin{defn}[Definition 5.1 in \cite{M14}]
\label{defn:effectiveinterpretation}
Let $\mathcal{A}$ be an $L$-structure, and let $\mathcal{B}$ be an $L'$-structure.  For simplicity, we suppose that $L$ is a finite relational language.  An \emph{effective interpretation of $\mathcal{A}$ in $\mathcal{B}$} is a tuple of generalized computable $\Sigma_1$ formulas defining a set $D\subseteq \mathcal{B}^{<\omega}$, an equivalence relation $\sim$ on $D$, the complementary relation $\not\sim$, and, for each $n$-ary relation symbol $R$ of $L$, an $n$-ary relation $R^*$ on $D$, and the complementary relation $\neg(R^*)$, such that the quotient structure $(D,R^*_{R\in L})/_{\!\sim}$ is well-defined and isomorphic to $\mathcal{A}$.  

\end{defn}  

We note that each computable structure $\mathcal{A}$ (with universe $\omega$) can be effectively interpreted in every infinite structure $\mathcal{B}$.  In the interpretation, the set $D$ is equal to $\mathcal{B}^{<\omega}$, and the element $n$ in $\mathcal{A}$ is represented by all $n$-tuples in $\mathcal{B}$.  An effective interpretation of an $L$-structure $\A$ in an $L'$-structure $\B$, as in Definition \ref{defn:effectiveinterpretation} above, gives a uniform effective method
of producing a copy of $\A$ from any copy of $\B$.  In some cases, the same formulas
define interpretations in many different structures $\B$.  For example, if $K$ is the class
of all countable fields (with domain $\omega$), there is an effective interpretation, uniform
for every $F\in K$, of the polynomial ring $F[X]$ in the field $F$.  (Notice that this cannot
be done using the ordinary model-theoretic notion of interpretation, with finitary formulas.)
In this way, a uniform effective interpretation may at times produce a Turing-computable
embedding of one class $K$ into another class $K'$, as in Definition \ref{defn:TCembedding}.
In the example above, the rings $E[X]$ and $F[X]$ are isomorphic if and only if the fields
$E$ and $F$ were isomorphic, and so this is indeed a $tc$-embedding.

Not all Turing-computable embeddings arise from effective 
interpretations. 
One example is the Friedman-Stanley embedding of graphs in linear orderings.  
However, in \cite{HTM^3}, Harrison-Trainor, Melnikov, Miller, and Montalb\'{a}n connected
effective interpretations with a notion recently formulated by Miller, Poonen, Schoutens,
and Shlapentokh in \cite{MPSS18}, which we now describe.

\begin{defn}
\label{defn:IsoK}

For a class $K$ of $L$-structures, we define the category $\Iso{K}$ to have as its objects
all structures in $K$, and to have as its morphisms all isomorphisms between objects in $K$.

Normally $K$ is closed under isomorphism (the structures all have universe $\omega$).
For a single structure $\A$, we define $\Iso{\A}$ to have $\set{\A'\cong\A}{\dom{\A'}=\omega}$
as its objects and all isomorphisms among these objects as the morphisms.  
\end{defn}

\begin{defn}[{\cite[Definition 3.1]{MPSS18}}]
\label{defn:functor}

A \emph{computable functor} from $\Iso{K'}$ to $\Iso{K}$ consists of two Turing functionals
$\Phi$ and $\Psi$ such that:
\begin{itemize}
\item
for every $\B\in\Iso{K'}$, $\Phi(\B)$ is (the atomic diagram of) a structure in $\Iso{K}$, 
\item
for every isomorphism $f:\B\to\B'$ in $\Iso{K'}$, $\Psi^{\B\oplus f\oplus \B'}$ is an isomorphism
from $\Phi(\B)$ onto $\Phi(\B')$, and 
\item
these two maps define a functor from $\Iso{K'}$ into $\Iso{K}$.  (Specifically, the map on morphisms
respects composition $\circ$ and preserves the identity isomorphism.)
\end{itemize}
A Turing-computable embedding $\Phi$ from $K'$ into $K$ is \emph{functorial} if there exists another
functional $\Psi$ such that $(\Phi,\Psi)$ is a computable functor from $\Iso{K'}$ into $\Iso{K}$.
\end{defn}

In a functor, $\Psi$ ensures that the map $\Phi$ on objects preserves the relation of being isomorphic.
In an embedding (whether Turing-computable or Borel), one requires $\Phi$ also to preserve non-isomorphism, so that
$\B\cong\B'$ if and only if $\Phi(\B)\cong\Phi(\B')$. As an example, the
$tc$-embedding described earlier, taking fields $F$ to rings $F[X]$, extends to a computable
functor in an obvious way.  Harrison-Trainor, Melnikov, Miller, and Montalb\'{a}n
showed that this reflects the uniform effective interpretability of the polynomial rings
in the fields.  The statement we give here is an amalgam of their Theorems 1.5 and 1.12
from \cite{HTM^3}.
\begin{thm}[\cite{HTM^3}]
\label{thm:equivalence}
Let $\Iso{\A}$ and $\Iso{\B}$ be categories as above.  Then effective interpretations
of $\A$ in $\B$ correspond bijectively to computable
functors from $\Iso{\B}$ into $\Iso{\A}$.  Each interpretation produces its functor
in the natural way, mapping each $\widetilde{\B}\cong\B$ to the copy of $\A$
given by the interpretation within $\widetilde{\B}$.

More generally, for categories $\Iso{K}$ and $\Iso{K'}$ as above, computable functors
from $\Iso{K}$ into $\Iso{K'}$ correspond bijectively to uniform effective interpretations
of $K'$ in $K$ in the same natural way.
(By a \emph{uniform effective interpretation}, we mean here a set of formulas
such that, for every $\B\in K$, the formulas give an effective interpretation in $\B$
of some $\A\in K'$.)
\end{thm}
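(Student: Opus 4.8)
The plan is to prove the two halves of the stated bijection and then verify that the two constructions are mutually inverse up to the natural equivalences: a $\B$-computable isomorphism between interpreted copies, on the interpretation side, and a natural isomorphism with components computable from the structures, on the functor side.

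For the first half --- turning an effective interpretation of $\A$ in $\B$ into a computable functor --- one uses the interpreting formulas directly. Given $\widetilde{\B}\cong\B$, the generalized computable $\Sigma_1$ formulas pick out, computably in $\widetilde{\B}$, the set $D\subseteq\widetilde{\B}^{<\omega}$ (which is c.e.), and both $\sim$ and each $R^*$ on $D$ are decidable because each of them and its complement is $\Sigma_1$-defined; enumerating $\sim$-classes and choosing representatives yields, uniformly computably in $\widetilde{\B}$, a copy $\Phi(\widetilde{\B})$ of $\A$ with universe $\omega$. Since generalized $\Sigma_1$ formulas are isomorphism-invariant, an isomorphism $f\colon\widetilde{\B}\to\widetilde{\B}'$ induces a map on tuples preserving $D$, $\sim$, and the $R^*$, hence --- after composing with the two representative-selection maps --- an isomorphism $\Psi^{\widetilde{\B}\oplus f\oplus\widetilde{\B}'}\colon\Phi(\widetilde{\B})\to\Phi(\widetilde{\B}')$ computable from the data; functoriality is immediate. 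This is exactly the functor described in the statement.

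The second half --- recovering an effective interpretation from a computable functor $(\Phi,\Psi)$ from $\Iso{\B}$ to $\Iso{\A}$ --- is the real content. The idea is to name an element of $\A\cong\Phi(\widetilde{\B})$ by a finite tuple $\bar b$ from $\B$ that already forces, through a finite stage of the computation of $\Phi$ run on a copy of $\B$ whose initial segment corresponds to $\bar b$, a chosen element into the domain of the output. Membership conditions, the relation $\sim$ (``$\bar b$ and $\bar b'$ name the same element'', tested by transporting the named elements along an isomorphism $f$ between the associated copies and applying $\Psi^{\widetilde{\B}\oplus f\oplus\widetilde{\B}'}$, which is coherent by the functor identities), and the pulled-back relations $R^*$ are each witnessed by a finite computation, so each is defined by a generalized computable $\Sigma_1$ formula over $\B$.

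The hard part will be the \emph{complements}: showing $\not\sim$, $\neg R^*$, and non-membership in $D$ are also generalized computable $\Sigma_1$, i.e., that the interpretation is genuinely effective and not merely one-sided. This is exactly where the morphism part $\Psi$, and not just the object part $\Phi$, must be used: because $\Psi$ has to act coherently on the isomorphisms between \emph{all} copies of $\B$ with universe $\omega$ extending a given finite condition, any ``negative'' fact about the output is already forced by a finite amount of the data --- otherwise one could glue together, via $\Psi$ and the functor identities, two computations delivering contradictory information and so contradict the functor axioms. Turning this into a clean accounting of the use-bounds of $\Phi$ and $\Psi$ is the main obstacle. Once both directions are in place, checking that interpretation $\mapsto$ functor $\mapsto$ interpretation and functor $\mapsto$ interpretation $\mapsto$ functor return the originals up to the respective equivalences is routine bookkeeping; and the generalization to classes $K,K'$ is then automatic, since a computable functor on $\Iso{K}$ is a single pair $(\Phi,\Psi)$ serving every $\B\in K$, so the formulas extracted above depend only on the functionals and hence constitute a uniform effective interpretation of $K'$ in $K$, while a uniform effective interpretation feeds into the first direction uniformly in $\B\in K$.
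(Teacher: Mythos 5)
Note first that the paper does not prove this statement at all: Theorem \ref{thm:equivalence} is quoted as an amalgam of Theorems 1.5 and 1.12 of \cite{HTM^3}, so your proposal has to be measured against that source's argument rather than anything in this paper. Your first direction (effective interpretation $\Rightarrow$ computable functor) is correct and is the standard argument: since each relation of the interpretation and its complement are defined by generalized computable $\Sigma_1$ formulas, they are decidable relative to the diagram of $\widetilde{\B}$, so one can enumerate $D$, select representatives of $\sim$-classes, and output a copy of $\A$ on universe $\omega$, with $\Psi$ induced by the isomorphism-invariance of the defining formulas. (One small correction: the definition of effective interpretation used here does not ask for the complement of $D$ to be $\Sigma_1$-definable, so that item on your list of ``complements'' is not required.)

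The genuine gap is in the second direction, which is the actual content of the theorem, and your text explicitly stops short of it: you identify showing $\not\sim$ and $\neg R^*$ to be generalized computable $\Sigma_1$ as ``the main obstacle'' and gesture at a gluing argument via $\Psi$, but you do not carry it out. In \cite{HTM^3} this is precisely where the work happens. The interpretation is defined so that a tuple $\bar b\in\B^{<\omega}$ names the element of $\Phi(\widetilde{\B})$ computed from a copy $\widetilde{\B}$ of $\B$ in which $\bar b$ has been pulled back to an initial segment of $\omega$; totality of $\Phi$ and $\Psi$ on \emph{every} copy of $\B$, together with the functor identities, guarantees that each atomic fact about the output, positive or negative, is forced by a finite piece of the diagram, and then both a relation and its complement are $\Sigma_1$-definable because ``the computation converges with output $1$'' and ``the computation converges with output $0$'' are each expressed by existential formulas whose quantifiers supply the finitely many extra elements of $\B$ needed for convergence --- this is exactly where the unbounded arity of generalized $\Sigma_1$ formulas is essential, and $\Psi$ is what makes the named element independent of the chosen copy and of the chosen isomorphism (your point about coherence). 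Without this use-bound analysis being carried through, and without verifying that the quotient so defined is isomorphic to $\Phi(\B)$, what you have is a correct plan identifying the hard step rather than a proof of it; the bijectivity claim and the uniform version for classes $K,K'$ are indeed routine once that step is done, as you say.
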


In \cite{HTM^2}, these results are extended to cover functors and interpretations more broadly.
However, for our present investigations, Theorem \ref{thm:equivalence} will suffice.
Indeed, we will produce some new examples of computable functors, which can then
be converted into effective interpretations using the proof of Theorem \ref{thm:equivalence}
in \cite{HTM^3}.  In Section \ref{sec:countable}, we will consider a different sort of reduction between
isomorphism relations on classes of structures, in which only countably many 
structures from each class are considered at a time.  This will give results markedly different
from those of Hjorth and Thomas.  However, this type of reduction will not be used
before Section \ref{sec:countable}, so we postpone its description until that section.

\section{Scott sentences}
\label{sec:Scottsentences}

As mentioned in the introduction, we may describe a structure $\mathcal{A}$ in $\TFAb$ by giving a basis $\bar{a}$ and saying which $\mathbb{Q}$-linear combinations of $\bar{a}$ are present.  Similarly, we may describe a structure in $\FD$ by giving a basis $\bar{a}$ and saying which polynomials over $\mathbb{Q}(\bar{a})$ have roots. We will see that every structure $\mathcal{A}$ in $\TFAb_r$ or $\FD_r$ has a $\Sigma_3$ Scott sentence.  
We begin with torsion-free abelian groups.  We use the standard group language, with one binary operation symbol (for the group operation), one unary operation symbol (for inverse), and one constant symbol (for the identity).  Since the identity and inverses are definable without quantifiers, formulas in our language are equivalent to formulas of the same complexity in the language with just the operation symbol. 

\begin{prop}\label{group-Scott-sent}

Every torsion-free abelian group $G$ of finite rank has a $\Sigma_3$ Scott sentence, 
which is $G$-computable.

\end{prop}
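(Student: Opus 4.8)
The plan is to use the observation from the introduction that a group in $\TFAb_r$ is determined up to isomorphism by the quantifier-free divisibility type of a basis. Fix $G\in\TFAb_r$, and identify $G$ with the rank-$r$ subgroup of $\Q^r$ that it spans over a chosen basis $\bar a=(a_1,\dots,a_r)$. Since $\bar a$ is a finite tuple of elements of $\omega$, it costs nothing to hard-wire it into the sentence we construct. Put
\[
D=\set{(n,m_1,\dots,m_r)}{n\ge 1,\ m_i\in\Z,\ G\models\exists y\,(\underbrace{y+\cdots+y}_{n}=\textstyle\sum_i m_i a_i)},
\]
so that under the above identification $G=\set{\tfrac1n\sum_i m_i a_i}{(n,\bar m)\in D}$ as a subset of $\Q^r$. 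Reading off the atomic diagram of $G$ (using the finite parameter $\bar a$), the set $D$ is c.e.\ relative to $G$: to place $(n,\bar m)$ into $D$ one searches for a witness $y$ among the elements of $\omega$.

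Next I would prove a small algebraic lemma. Suppose $G'$ is a torsion-free abelian group carrying a linearly independent tuple $\bar a'=(a_1',\dots,a_r')$ such that (i) every $b'\in G'$ satisfies $nb'=\sum_i m_i a_i'$ for some $(n,\bar m)\in D$, and (ii) for every $(n,\bar m)\in D$ there is $y'\in G'$ with $ny'=\sum_i m_i a_i'$. Then $G'\cong G$: from (i) and the independence of $\bar a'$ one sees that $\bar a'$ is a basis and that, inside the divisible hull $G'\otimes\Q\cong\Q^r$, one has $G'=\set{\tfrac1n\sum_i m_i a_i'}{(n,\bar m)\in D}$, whence $\tfrac1n\sum_i m_i a_i\mapsto\tfrac1n\sum_i m_i a_i'$ is a group isomorphism $G\to G'$ --- well-definedness and injectivity using the independence of $\bar a$ and $\bar a'$ together with torsion-freeness. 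Conversely, if $\varphi\colon G\to G'$ is an isomorphism then $\bar a':=\varphi(\bar a)$ satisfies (i) and (ii).

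Granting the lemma, I take the Scott sentence to be $\exists\bar a'\,[\theta]$, where $\theta$ is the conjunction of: (a) a $\Pi_1$ sentence $\sigma_{\mathrm{grp}}$ asserting that the structure is a torsion-free abelian group; (b) the $\Pi_1$ formula $\bigwedge_{\bar n\in\Z^r\setminus\{0\}}\neg(\sum_i n_i a_i'=0)$, saying $\bar a'$ is independent; (c) the $\Pi_2$ formula $\forall b'\,\bigvee_{(n,\bar m)\in D}(nb'=\sum_i m_i a_i')$, which forces $\bar a'$ to be a maximal independent tuple and bounds $G'$ from above; and (d) the $\Pi_2$ formula $\bigwedge_{(n,\bar m)\in D}\exists y\,(ny=\sum_i m_i a_i')$, which bounds $G'$ from below. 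Then $\theta$ is $\Pi_2$, so $\exists\bar a'\,[\theta]$ is $\Sigma_3$; and since $D$ --- hence every index set appearing under a $\bigwedge$ or $\bigvee$ --- is c.e.\ relative to $G$, the sentence is $G$-computable. By the lemma, $G'\models\exists\bar a'\,[\theta]$ if and only if $G'\cong G$, so this is a $G$-computable $\Sigma_3$ Scott sentence for $G$.

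The one point needing care is the upper bound on $G'$. The naive move would be to also conjoin $\bigwedge_{(n,\bar m)\notin D}\neg\exists y\,(ny=\sum_i m_i a_i')$, but the complement of $D$ need not be c.e.\ relative to $G$, which would cost us $G$-computability; this is the main obstacle. Clause (c) is the replacement: it works because in a rank-$r$ torsion-free abelian group every element lies in the $\Q$-span of any basis, and $D$ is closed under the scaling $(n,\bar m)\mapsto(kn,k\bar m)$, so membership of a given $b'$ in the relevant copy of $G$ is always witnessed by a single pair already in $D$. Beyond this, the remaining work --- the two directions of the lemma, and the complexity bookkeeping confirming that the sentence is computable $\Sigma_3$ relative to $G$ --- is routine.
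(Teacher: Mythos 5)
Your proposal is correct and follows essentially the same route as the paper's proof: your set $D$ is just a reparametrization of the paper's set $S$ of rational linear combinations of the basis present in $G$, and your clauses (b), (c), (d) correspond exactly to the paper's independence conjunction, its $(\forall y)\bigvee_{\lambda\in S} y=\lambda(\bar x)$ clause, and its $\bigwedge_{\lambda\in S}(\exists y)\, y=\lambda(\bar x)$ clause, with the same observation that the upper bound must be phrased via a universal quantifier over a $G$-c.e.\ disjunction rather than via the (non-c.e.) complement of $D$. The only difference is that you spell out the verification lemma that the paper leaves implicit.
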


\begin{proof}

Let $G$ be a subgroup of $\mathbb{Q}^n$ of full rank.  Consider an $n$-tuple $\bar{a}$ in $G$ that is linearly independent over $\mathbb{Z}$, and let $S$ be the set of $\mathbb{Q}$-linear expressions $\lambda(\bar{x})$ such that $\lambda(\bar{a})$ is present in $G$.  The Scott sentence says that $G$ is a torsion-free abelian group ($\Pi_1$), and that
there is a tuple $\bar{x}$ that is independent and such that the $\mathbb{Q}$-linear combinations present are just those in $S$.  To say that $\bar{x}$ is independent, we take the conjunction of formulas $m_1x_1 + \ldots + m_nx_n\not= 0$, for integers $m_1,\ldots,m_n$ not all $0$.  This is computable $\Pi_1$. To say that $y = \lambda(\bar{x})$ for $\lambda(\bar{x}) = q_1x_1 + \ldots + q_nx_n$, where $q_i = \frac{m_i}{m}$, we write $my = m_1x_1 + \ldots + m_nx_n$.  This is quantifier-free.  To say that the $\mathbb{Q}$-linear combinations present are just those in $S$, we write $\bigwedge_{\lambda\in S}(\exists y) y = \lambda(\bar{x})\ \&\ (\forall y)\bigvee_{\lambda\in S}y = \lambda(\bar{x})$.  This is $\Pi_2$.  With a $G$-oracle we can enumerate $S$ and compute the $\Pi_2$ sentence,
making the whole Scott sentence $G$-computable $\Sigma_3$.
\end{proof}

\begin{prop}\label{group-no-pi3}

There exists $G\in \TFAb_1$ with no $\Pi_3$ Scott sentence.  

\end{prop}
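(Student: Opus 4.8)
The plan is to exhibit a single group $G \in \TFAb_1$ whose isomorphism type genuinely requires the third level of the hierarchy, and the natural candidates are the rank-$1$ groups given by their \emph{type} (in the classical Baer sense): a function $\chi$ from primes to $\N \cup \{\infty\}$, where $G = G_\chi \le \Q$ consists of those rationals whose denominators are $p$-constrained according to $\chi(p)$. Two such groups $G_\chi, G_{\chi'}$ are isomorphic if and only if $\chi$ and $\chi'$ agree at all but finitely many primes and agree (including the value $\infty$) at those. So the isomorphism relation on $\TFAb_1$, when we restrict to a nicely chosen family, is essentially $E_0$-like with an extra $\{0,1,\dots,\infty\}$ coordinate, and the obstruction to a $\Pi_3$ Scott sentence should come from the ``all but finitely many'' quantifier combined with a coordinate that can be arbitrarily large. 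Concretely I would choose $G$ so that $\chi(p_n) = 0$ for all $n$ (so $G = \Z$ up to the relevant normalization, or better, a group where each coordinate is genuinely ``open-ended'').

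First I would recall the syntactic characterization: a structure $\A$ has a $\Pi_3$ Scott sentence if and only if there is a $\Pi_3$ sentence true of $\A$ and false of every $\B \not\cong \A$; equivalently (by the back-and-forth analysis, e.g.\ via Montalbán's results on Scott complexity), the orbit of every tuple is $\Sigma_3$-definable \emph{and} some uniformity/boundedness condition holds, while the failure of a $\Pi_3$ Scott sentence is typically witnessed by producing, for each candidate $\Pi_3$ sentence $\varphi$, either a non-isomorphic $\B$ satisfying $\varphi$ or a failure of $\varphi$ in $\A$ itself. The cleaner route here is the standard one: show that $G$ is not $\Sigma_3$-Scott-bounded in the sense that there is no bound on the back-and-forth rank needed. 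I would realize this by building, for a given would-be $\Pi_3$ Scott sentence, an approximating chain: a $\Sigma_3$ sentence is a countable disjunction of $\Pi_2$ sentences, so a $\Pi_3$ sentence $\varphi$ is a countable conjunction $\bigwedge_k \psi_k$ with each $\psi_k$ being $\Sigma_3$, i.e.\ $\bigvee_j \theta_{k,j}$ with $\theta_{k,j}$ of the form $\exists \bar x\, \forall \bar y\, (\text{quantifier-free})$. The key point: each $\theta_{k,j}$, being $\Sigma_3$ and hence in particular not able to pin down the type at a single prime, will be satisfied by $G_{\chi'}$ for a suitably perturbed $\chi'$.

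The concrete construction I would carry out: suppose $\varphi$ is $\Pi_3$ and $G \models \varphi$. Write $\varphi = \bigwedge_k \bigvee_j \exists \bar x_{k,j}\,\forall \bar y\, \rho_{k,j}$. For each $k$ pick a disjunct $j(k)$ that holds in $G$, and a witnessing tuple $\bar a_k$ in $G$; since $G$ has rank $1$, $\bar a_k$ is (up to the usual arithmetic) determined by finitely much information about $\chi$ — specifically, which equations $m y = n a$ hold, which involves only finitely many primes $P_k$. Now I would diagonalize: choose a prime $q$ not in $\bigcup_k P_k$ (this requires the $P_k$ to be slenderly controlled, which is where the argument does real work — one arranges the enumeration so that at each stage only finitely many primes are ``committed''), and let $G'$ be $G_\chi$ with $\chi(q)$ changed from $0$ to (say) $\infty$, or to a very large finite value distinct from anything forced. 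Then $G' \not\cong G$ because they differ at infinitely... — wait, at a single prime $q$ with a value-change between finite and $\infty$, which already destroys isomorphism since isomorphic rank-$1$ groups must agree at $q$ up to finite difference but $\infty$ versus $0$ is not a finite difference. Yet $G' \models \varphi$: each $\forall \bar y\, \rho_{k,j(k)}$ that held in $G$ witnessed by $\bar a_k$ still holds in $G'$ because $\bar a_k$'s relevant divisibility facts live away from $q$, and the universal part $\forall \bar y\,\rho_{k,j(k)}$ is preserved under the embedding $G \hookrightarrow G'$ for quantifier-free $\rho$ in the ``only finitely much is forced'' regime — this last inference is the delicate one and needs the observation that a $\Pi_1$ matrix with parameters in $G$ that holds in $G$ may fail in the larger $G'$, so one must instead verify directly that for the specific $\rho_{k,j(k)}$ arising, the extra divisibility at $q$ does not falsify any universal instance.

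\textbf{Main obstacle.} The heart of the difficulty, and where I expect to spend the most care, is the preservation claim: that a $\Sigma_3$ disjunct true in $G$ remains true in the perturbed $G'$. Passing from $G$ to $G'$ adds new elements (more $q$-divisible rationals), so a $\forall$-statement can \emph{break}. The resolution is to be more surgical — rather than a crude single-prime change, build $G'$ as a direct limit where at stage $k$ one only commits finitely many primes and arranges that the $\Pi_2$-behavior relevant to $\theta_{k,j(k)}$ is already decided and locked; equivalently, invoke a $\Pi_2$-preservation / amalgamation lemma for rank-$1$ groups saying that for any $\Pi_2$ sentence $\sigma$ with $G \models \sigma$ and any finite set of primes $F$, there is $G' \not\cong G$ with $G' \models \sigma$ and $G, G'$ agreeing off a prime outside $F$. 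Proving that lemma is really the technical core, and it is plausible it is exactly what the authors do; alternatively they may phrase the whole thing via the theory of $\alpha$-finitely generated structures from Alvir's thesis and a pumping argument on back-and-forth types, which would let one avoid hand-checking $\Pi_2$ preservation by appealing to a general boundedness principle. Either way, the combinatorial skeleton above — finitely-committed witnesses at each conjunct, diagonalize at a fresh prime, preserve — is the route I would follow.
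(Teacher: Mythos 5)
There are two genuine gaps here, one in the choice of witness and one at the heart of the argument. First, your concrete candidate $G$ with $\chi(p)=0$ for all $p$, i.e.\ $G=\Z$, cannot work: $\Z$ is finitely generated and has a Scott sentence that is the conjunction of the $\Pi_1$ axioms for torsion-free abelian groups with the $\Sigma_2$ sentence $\exists x\,\forall y\,\bigvee_{n\in\Z} y=nx$, which is in particular $\Pi_3$; likewise every orbit in $\Z$ is $\Pi_1$-definable. The group actually needed is one where every prime divides $1$ exactly once (the subgroup of $\Q$ generated by all $\tfrac1p$), because the whole point is to be able to ``shift'' at a fresh prime $q$ via the isomorphism $x\mapsto x/q$ onto the extension generated by $G$ and $\tfrac1{q^2}$; with $\chi\equiv 0$ no such move is available. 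Second, your main strategy---write an arbitrary $\Pi_3$ sentence as $\bigwedge_k\bigvee_j\exists\bar x\,\forall\bar y\,\rho_{k,j}$, pick witnesses for each conjunct, and diagonalize at a prime avoiding all committed primes---founders exactly where you flag it: there are infinitely many conjuncts $k$, each committing finitely many primes, so $\bigcup_k P_k$ may be all primes, and ``arranging the enumeration so that only finitely many primes are committed at each stage'' is not something you control, since the sentence is given adversarially. The ``$\Pi_2$-preservation/amalgamation lemma'' you defer to is precisely the unproved core, and as stated (preservation of $\Sigma_3$ disjuncts under passing to a larger group $G'$) it is false in general, for the reason you yourself note: universal statements can break when elements are added.

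The paper avoids both problems by invoking Montalb\'an's characterization in its correct form: a structure has a $\Pi_3$ Scott sentence if and only if the orbit of \emph{every} tuple is defined by a $\Sigma_2$ formula (your recollection, with $\Sigma_3$ plus a boundedness condition, is off by a level, and the discrepancy matters). This reduces the task to defeating a single $\Sigma_2$ definition of the orbit of the single element $a=1$ in the group generated by all $\tfrac1p$. A purported definition $(\exists\bar u)\psi(x,\bar u)$ with $\psi\in\Pi_1$ has a witness tuple $\bar c$ involving only finitely many primes, so a fresh prime $q$ exists; the isomorphism $x\mapsto x/q$ from $G$ onto the extension $G'$ generated by $G$ and $\tfrac1{q^2}$ transfers $\psi(1,\bar c)$ to $\psi(\tfrac1q,\bar c')$ in $G'$, and then downward preservation of $\Pi_1$ formulas from $G'$ to the substructure $G$ gives $G\models(\exists\bar u)\psi(\tfrac1q,\bar u)$, even though $\tfrac1q$ is not in the orbit of $1$ (as $1$ is divisible by $q$ in $G$ but $\tfrac1q$ is not). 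Your ``finitely committed witnesses, fresh prime, preserve'' skeleton is recognizably the same local move, but without the reduction to a single tuple's orbit it does not close, and with the witness group $\Z$ it cannot even start.
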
 

\begin{proof}

By a result of Montalb\'{a}n \cite{Robuster}, there is a $\Pi_3$ Scott sentence if and only if the orbits of all tuples are defined by $\Sigma_2$ formulas.  We describe $G$ with a basis whose orbit is not defined by a $\Sigma_2$ formula.  Take $G\subseteq\mathbb{Q}$ generated by all $\frac{1}{p}$ (for all primes $p$).  For a basis, take $a = 1$.  This is divisible, just once, by each prime.  Suppose $(\exists\bar{u})\psi(x,\bar{u})$ defines the orbit of $1$ in $G$, where $\psi(x,\bar{u})$ is $\Pi_1$, and take $\bar{c}$ such that $G\models\psi(1,\bar{c})$.  Each $c_i$ has form $\frac{m_i}{n_i}$, where $m_i,n_i$ are relatively prime and $n_i$ is a product of primes, each occurring at most once.  
Take a prime $q$ not a factor of any $m_i$ or $n_i$.  Let $a' = \frac{1}{q}$ and let $c_i' = \frac{m_i}{qn_i}$.  We have an isomorphism $x\rightarrow\frac{x}{q}$ from $G$ onto the extension $G'$ generated by $\frac{1}{q^2}$ and the elements of $G$.  All formulas true in $G$ of $a,\bar{c}$ are true in $G'$ of $a',\bar{c}'$.  Since $G\subseteq G'$, the $\Pi_1$ formulas are true in $G$ of $a',\bar{c}'$.  So, we have $G\models(\exists\bar{u})\psi(a',\bar{u})$.  However, $a = 1$ is divisible by $q$ in $G$, while $a' = \frac{1}{q}$ is not.  So $a'$ is not in the orbit of $a$.  Thus, this orbit does not have a $\Sigma_2$ definition. 
\end{proof} 


In \cite{KLM}, it is shown that $\mathbb{Q}$ is the only rank $1$ torsion-free abelian group, up to isomorphism, with a $\Pi_2$ Scott sentence.

\bigskip

For fields, we use the usual field language with three binary operation symbols (for addition, subtraction, and multiplication) and two constant symbols (for $0$ and~$1$). We could omit subtraction and the constants $0$ and $1$ since these are defined by quantifier-free formulas using just addition and multiplication.

\begin{prop}
\label{field-Scott-sent}

Every field $F$ of characteristic 0 and finite transcendence degree has a $\Sigma_3$ Scott sentence,
which is $F$-computable.
\end{prop}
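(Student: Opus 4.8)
The plan is to mimic the proof of Proposition~\ref{group-Scott-sent} for torsion-free abelian groups, replacing linear independence over $\Z$ with algebraic independence over $\Q$, and replacing the set of present $\Q$-linear combinations with the set of polynomials (over $\Q$ in the basis variables) that acquire a root. First I would fix $F$ of characteristic $0$ and transcendence degree $r$, and choose a transcendence basis $\bar a = (a_1,\ldots,a_r)$ for $F$ over $\Q$. The Scott sentence will assert: (i) $F$ is a field of characteristic $0$ ($\Pi_1$, using axioms plus the scheme $\underbrace{1+\cdots+1}_{n}\neq 0$ for all $n\geq 1$); (ii) there is a tuple $\bar x$ of length $r$ that is algebraically independent over $\Q$; and (iii) the finitary existential type of $\bar x$ — specifically, which polynomials $p(\bar x, y) \in \Q[\bar x][y]$ have a root $y$ in the field — matches a fixed $F$-computable list $S$.

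The main work is expressing each clause at the right level. To say $\bar x$ is algebraically independent over $\Q$, I take the conjunction over all nonzero $p(\bar X)\in\Q[\bar X]$ of the quantifier-free statements $p(\bar x)\neq 0$; since $\Q[\bar X]$ is $F$-computable (indeed computable), this is computable $\Pi_1$. To encode that a polynomial $p(\bar x,y)$ has a root, I write $(\exists y)\,p(\bar x,y)=0$, where the polynomial equation is expressed as a quantifier-free formula over the field language after clearing denominators from $\Q$; this is $\Sigma_1$. The set $S$ is the collection of (codes for) such polynomials having a root of $\bar a$ in $F$; with an $F$-oracle one can enumerate $S$ by searching $F$ for roots, so $S$ is $F$-c.e. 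The crucial point — the analogue of the second half of clause~(iii) in the group case — is to also assert that \emph{no} polynomial outside $S$ has a root, i.e. $(\forall y)\bigwedge_{p\notin S}\,p(\bar x,y)\neq 0$. But here I must be careful: the obstruction I anticipate is that the complement of $S$ need not be $F$-c.e., so a naive conjunction over $p\notin S$ is not computable.

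The way around this — and the step I expect to be the main obstacle to get cleanly — is to phrase the negative condition using $S$ itself rather than its complement. The right formulation is: for every single element $y$ of the field, the set of polynomials over $\Q(\bar x)$ that $y$ satisfies is ``consistent with $S$'' in the sense that $y$ is algebraic over $\Q(\bar x)$ of some degree $d$ witnessed by a monic polynomial in $S$, and every other polynomial $y$ satisfies is forced by that one. Concretely, I assert $(\forall y)\,\bigvee_{p\in S,\ p \text{ monic in } y}\big[p(\bar x,y)=0\ \&\ (\forall z)(p(\bar x,z)=0 \rightarrow \text{$z$ and $y$ agree on all relevant polynomial relations from }S)\big]$ — but even simpler, since $F$ is determined up to isomorphism over $\bar a$ by \emph{which} irreducible polynomials over $\Q(\bar a)$ split off a root, it suffices to say $(\forall y)\bigvee_{p\in S}\,p(\bar x,y)=0$, i.e. every element of $F$ is a root of some polynomial listed in $S$. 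This is $\Pi_2$ (a $\forall$ over a c.e.\ disjunction of quantifier-free formulas). Combined with $\bigwedge_{p\in S}(\exists y)\,p(\bar x,y)=0$ (which is $\Pi_2$), this pins down the existential type of $\bar x$ exactly, because $S$ is closed under the obvious consequences (products, and divisors via the fact that if $p$ has a root so do its irreducible factors) — so knowing that the present roots are among $S$ and that all of $S$ is realized determines $F$ up to isomorphism over $\bar a$. Prefixing the existential quantifier $(\exists \bar x)$ over the algebraic-independence $\Pi_1$ clause together with this $\Pi_2$ matching clause yields a $\Sigma_3$ sentence, and it is $F$-computable since $S$ is $F$-c.e. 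The verification that this sentence is indeed a Scott sentence — that any model is isomorphic to $F$ — follows because a transcendence basis exists in any model satisfying it, and the existential type of that basis, being squeezed between ``contained in $S$'' and ``containing $S$,'' equals that of $\bar a$ in $F$, and the existential type of a transcendence basis determines a field in $\FD_r$ up to isomorphism (the same principle invoked in the introduction).
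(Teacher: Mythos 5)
Your overall strategy is the right one and matches the paper's (the paper's proof simply defers to Proposition~\ref{group-Scott-sent}, replacing linear by algebraic independence and taking $S$ to be the existential type of the basis); your complexity bookkeeping for the $\Pi_1$, $\Pi_2$, and $\Sigma_3$ pieces is also sound. But the specific negative clause you settle on has a genuine gap. You take $S$ to be \emph{all} polynomials $p(\bar X,Y)$ with $F\models(\exists y)\,p(\bar a,y)=0$ and claim that $(\forall y)\bigvee_{p\in S}p(\bar x,y)=0$, together with the positive clause, pins down the existential type of $\bar x$. It does not: the $\forall$-clause only says each element satisfies \emph{some} $p\in S$, which a proper extension of $F$ can also arrange, since elements satisfy many reducible polynomials. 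Concretely, take $r=0$, $F=\Q$ (so $\bar x$ is empty and $S=\{p\in\Q[Y]:p\text{ has a rational root}\}$) and $F'=\Q(\sqrt 2)$. Then $F'$ realizes all of $S$ (as $\Q\subseteq F'$), and every $y\in F'$ is a root of some member of $S$: for instance $\sqrt 2$ is a root of $Y(Y^2-2)$, which lies in $S$ because it has the rational root $0$. So $F'$ satisfies your sentence though $F'\not\cong F$. (For $r\geq 1$ the same trick works with, e.g., $\Q(t)$ versus $\Q(t,\sqrt t)$ and the polynomial $Y(Y^2-t)$.) Your closure claim is also where the reasoning slips: a polynomial with a root need have only \emph{one} irreducible factor with a root, not all, so the set of realized polynomials is not closed under divisors in the way your argument needs.

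The repair is to restrict $S$ to \emph{irreducible} polynomials. Let $S_{\mathrm{irr}}$ be the set of $p(\bar X,Y)\in\Q[\bar X][Y]$, nonconstant in $Y$, that are irreducible over $\Q(\bar X)$ and satisfy $F\models(\exists y)\,p(\bar a,y)=0$. Irreducibility over $\Q(\bar X)$ is decidable, so $S_{\mathrm{irr}}$ is still $F$-c.e., and the complexity counts are unchanged. Now $(\forall y)\bigvee_{p\in S_{\mathrm{irr}}}p(\bar x,y)=0$ really does say that the minimal polynomial over $\Q(\bar x)$ of every element lies in $S_{\mathrm{irr}}$, because an irreducible $p$ with $p(\bar x,y)=0$ is (up to a scalar) that minimal polynomial. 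This is the field analogue of the rigidity that makes the group clause $(\forall y)\bigvee_{\lambda\in S}y=\lambda(\bar x)$ work: each $y$ determines a unique $\lambda$ there, and a unique minimal polynomial here. Together with $\bigwedge_{p\in S_{\mathrm{irr}}}(\exists y)\,p(\bar x,y)=0$, the sentence asserts that the irreducible polynomials over $\Q(\bar x)$ having roots are \emph{exactly} those in $S_{\mathrm{irr}}$, which is precisely the invariant that determines a field of transcendence degree~$r$ over the purely transcendental subfield $\Q(\bar x)$ up to isomorphism (this is the principle the introduction appeals to, and for $r=0$ it underlies \cite[Cor.~2.8]{HKMS}). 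With that substitution your argument goes through and recovers the $F$-computable $\Sigma_3$ Scott sentence.
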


The proof of Proposition \ref{field-Scott-sent} is the same as that for Proposition \ref{group-Scott-sent}, with linear independence replaced by algebraic independence, and taking $S$ to be the set of existential formulas satisfied by the transcendence basis $\bar a$. 

\bigskip

We also adapt the proof of Proposition \ref{group-no-pi3} to the setting of fields. We use a coding that will appear again in the next section.

\begin{prop}

There exists $F \in \FD_1$ with no $\Pi_3$ Scott sentence. 

\end{prop}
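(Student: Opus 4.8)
The plan is to mimic the proof of Proposition \ref{group-no-pi3}, using Montalb\'{a}n's criterion from \cite{Robuster}: it suffices to exhibit a field $F\in\FD_1$ together with a transcendence basis $\{a\}$ (or some tuple) whose orbit is not definable by any $\Sigma_2$ formula. The natural analogue of the group $\Z[1/p : p \text{ prime}]$ is a field in which the transcendental element $a$ acquires, for each prime $p$, a $p$-th root but not a $p^2$-th root; this is the ``coding that will appear again in the next section'' alluded to just before the statement. So first I would let $F = \Q(a)(\,\sqrt[p]{a} : p \text{ prime}\,)$ inside a fixed algebraic closure of $\Q(a)$, take $\{a\}$ as the transcendence basis, and set up the parallel situation: in $F$, for every prime $p$ the polynomial $x^p - a$ has a root, but $x^{p^2} - a$ does not.

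Next I would run the Ehrenfeucht-style argument. Suppose $(\exists\bvec)\,\psi(x,\bvec)$, with $\psi$ a $\Pi_1$ (existential-free) formula, defines the orbit of $a$, and fix a witnessing tuple $\cvec$ in $F$ with $F\models\psi(a,\cvec)$. Each $c_i$ lies in a finitely generated subfield, hence in $\Q(a)(\sqrt[p]{a} : p\in P)$ for some finite set $P$ of primes. Choose a prime $q\notin P$. Now I build $F'$, isomorphic to $F$, in which the distinguished element $a'$ has a $q^2$-th root, by the substitution $a \mapsto a'{}^{q}$ where $a'$ is a new transcendental: concretely, inside an algebraic closure, let $a' = \sqrt[q]{a}$ and let $F'$ be generated over $\Q(a')$ by the $p$-th roots of $a'$ for all primes $p$; the map extending $a\mapsto a'{}^{q} = a$... here care is needed, and this is where the field case diverges from the group case. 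The clean way is: $F' := \Q(t)(\sqrt[p]{t} : p \text{ prime})$ for a fresh transcendental $t$, so $F\cong F'$ via $a\mapsto t$; then observe that the subfield $E' := \Q(t^{q})(\sqrt[p]{t^q} : p\in P)$ of $F'$ is, via $a\mapsto t^q$, an isomorphic copy of $\Q(a)(\sqrt[p]{a}:p\in P) \subseteq F$ (using that $q\notin P$, so $\sqrt[p]{t^q}$ behaves like a $p$-th root of an independent transcendental). Transport $a,\cvec$ along this isomorphism to $t^q, \cvec'$ inside $E' \subseteq F'$. Since $\psi$ is existential-free (hence downward absolute is the wrong direction — $\Pi_1$ formulas go \emph{up} to superstructures), $F'\models\psi(t^q,\cvec')$, so $F'\models(\exists\bvec)\psi(t^q,\bvec)$, meaning $t^q$ is in the orbit of the basis element in $F'\cong F$. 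But $t^q$ has a $q$-th root in $F'$ (namely $t$) while also $a$ has a $q$-th root in $F$ (namely $\sqrt[q]{a}$); the actual distinction I need is $q$-th \emph{vs} $q^2$-th roots: $a$ in $F$ has no $q^2$-th root, whereas $t^q$ in $F'$ does have one (namely $\sqrt[q]{t}$, since $(\sqrt[q]{t})^{q^2} = t^q$). Hence $t^q$ is not in the orbit of $a$ — contradiction. Therefore the orbit of the transcendence basis is not $\Sigma_2$-definable, and by Montalb\'{a}n's theorem $F$ has no $\Pi_3$ Scott sentence.

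The main obstacle, and the place I would be most careful, is verifying that the polynomial $x^{q^2} - a$ genuinely has no root in $F = \Q(a)(\sqrt[p]{a} : p \text{ prime})$ — i.e., that adjoining $p$-th roots for all primes $p$ does not accidentally produce a $q^2$-th root of $a$. This is a Kummer-theory computation: over $\Q(a)$ (or after passing to $\overline{\Q}(a)$ to get enough roots of unity, which only helps), the group generated by $a$ modulo $q$-th powers, together with the independence of the radicals $\sqrt[p]{a}$ for distinct primes, shows $[\,\Q(a)(\sqrt[q^2]{a}) : \Q(a)(\sqrt[q]{a})\,] = q$, and this degree is not ``used up'' by adjoining $\ell$-th roots for primes $\ell\neq q$ (which are linearly disjoint) nor by the $q$-th root itself. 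One clean way to see it: the valuation $v$ on $\Q(a)$ at the prime $a$ extends to $F$, and in $F$ the value group of $v$ is generated by the values $v(\sqrt[p]{a}) = 1/p$ over all primes $p$, i.e.\ it is $\bigoplus_p \frac{1}{p}\Z$... one must check this is exactly $\{m/k : k \text{ squarefree}\}$, so $1/q^2$ is not in it, whence no $q^2$-th root of $a$ exists. I would present this valuation-theoretic argument, as it parallels exactly the ``each prime occurs at most once'' bookkeeping in the proof of Proposition \ref{group-no-pi3} and makes the absorbing of the ramification transparent. The rest of the argument is a routine transcription of the group case, with linear combinations replaced by the transported subfield and linear independence by algebraic independence.
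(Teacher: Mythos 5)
Your construction of $F$ and your verification that no $q^2$-th (equivalently, for the relevant element, $q$-th) root of the transcendental sneaks in — via the value group $\{m/k : k \text{ squarefree}\}$ of the $a$-adic valuation — are fine, and the overall strategy (Montalb\'an's criterion, a tuple $\cvec$ involving only finitely many radicals $\sqrt[p]{a}$, choice of a fresh prime $q$) is the same as the paper's. But there is a genuine gap at the absoluteness step, and it is not a slip of notation: you transport $\psi(a,\cvec)$ along an isomorphism onto the \emph{subfield} $E'\subseteq F'$ (or, even if you extend the map $a\mapsto t^q$ to all of $F$, onto a proper subfield of $F'$), and then assert $F'\models\psi(t^q,\cvec')$ on the grounds that ``$\Pi_1$ formulas go up to superstructures.'' That principle is false — universal formulas are preserved when passing \emph{down} to substructures, existential ones when passing up — and the failure is exactly of the kind at stake here: $\psi$ may contain universal clauses (e.g.\ ``$x$ has no $q^2$-th root'') that hold of $t^q$ with parameters $\cvec'$ inside $E'$ but are destroyed in $F'$, where $t^{1/q}$ provides the forbidden root. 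So the conclusion $F'\models(\exists\bar u)\psi(t^q,\bar u)$ does not follow, and the contradiction never gets off the ground.

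The repair is to arrange the inclusion the other way, which is what the paper does: keep $F=\Q(t)(t^{1/p}: p \text{ prime})$ as the small field, let $F'=F(t^{1/q^2})$, and observe that $t\mapsto t^{1/q}$ (so $t^{1/p}\mapsto t^{1/pq}\in F$ for $p\neq q$, and $t^{1/q}\mapsto t^{1/q^2}$) extends to an isomorphism $f\colon F\to F'$ \emph{onto the extension}. Then $F'\models\psi(t^{1/q},f(\cvec))$, and since the parameters $t^{1/q}$ and $f(\cvec)$ all lie in $F$ (here one uses that $q$ avoids the finitely many primes occurring in $\cvec$), downward absoluteness of $\Pi_1$ gives $F\models\psi(t^{1/q},f(\cvec))$, hence $F\models(\exists\bar u)\psi(t^{1/q},\bar u)$; the contradiction is then drawn inside $F$, where $t$ has a $q$-th root but $t^{1/q}$ does not (your valuation computation). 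With your containments reversed in this way the rest of your argument goes through; as written, however, the key transfer step is invalid.
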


\begin{proof}

As in the proof of Proposition \ref{group-no-pi3}, it suffices to produce $F\in \FD_1$ with a basis $a$ whose orbit is not defined by a $\Sigma_2$ formula.  Let $F \subseteq \overline{\Q(t)}$ be the subfield generated by the set of elements $t^{1/p}$, for $p>2$ prime, and let $a = t$.
(For definiteness:  within $\overline{\Q(t)}$, fix a real closed subfield containing $t$, and
choose each $t^{1/p}$ to be the unique $p$-th root of $t$ within that subfield.)
Suppose $(\exists\bar{u})\psi(x,\bar{u})$ defines the orbit of $t$ in $F$, where $\psi(x,\bar{u})$ is $\Pi_1$, and take $\bar{c}$ such that $F\models\psi(t,\bar{c})$.  Each $c_i$ is given by an algebraic expression in some finite collection of the generators $t^{1/p_{i,j}}$. 
Take a prime $q$ different from all of these $p_{i,j}$, and let $a' = t^{1/q}$. Let $F'$ be the extension of $F$ by $t^{1/q^2}$.  Then there is an isomorphism $f:F \to F'$ that fixes $\Q\subset F$ and sends $t$ to $t^{1/q}$. Let $c_i' = f(c_i)$. Since $f$ is an isomorphism and $F\models\psi(t,\bar{c})$, we have $F'\models\psi(t^{1/q},\bar{c'})$. On the other hand, since $F\subseteq F'$, any $\Pi_1$ formulas that are true in $F'$ of $a',\bar{c}'$ are also true in $F$ of $a',\bar{c}'$.  So, we have $F\models(\exists\bar{u})\psi(a',\bar{u})$.  However, $a = t$ has a $q$-th root in $F$, while $a' = t^{1/q}$ does not.  So $a'$ is not in the orbit of $a$.  Thus, this orbit does not have a $\Sigma_2$ definition. 
\end{proof}

\section{A functorial $tc$-embedding of $\tfa_r$ into $\TD_r$}
\label{sec:groupstofields}

As above, let $\TFAb_r$ be the class of torsion-free abelian groups of rank $r$. We view it
as a topological space of structures in the class with domain $\omega$, in the signature with $+$.
The topology arises from the identification of atomic diagrams with elements of $2^\omega$,
as described in Section \ref{sec:reducibility}.
Likewise, $\TD_r$ is the class of fields of characteristic $0$ and of transcendence degree $r$
over the prime subfield $\Q$.  As with $\TFAb_r$, we view it as a topological space
of structures in the class with domain $\omega$, in the signature with $+$ and $\cdot$,
as usual for fields.  The elements $0$ and $1$ and the operations of subtraction and division
are all definable by quantifier-free formulas, so may be used without hesitation.

$\TFAb_0$ is trivial.  In contrast, $\TD_0$ is not trivial:  there are many algebraic field
extensions of $\Q$, and they have been carefully studied.  For rank $r\geq 1$, however,
$\TFAb_r$ and $\TD_r$ show distinct similarities:  in both cases the isomorphism relation is effective $\Sigma_3$, and becomes effective $\Pi_2$ if one adds $r$ constant symbols to represent the elements of an arbitrary maximal independent set.  (\emph{Independence} refers to linear independence in $\TFAb_r$ and to algebraic independence in $\TD_r$.)  We also remark that the Turing degree spectra of groups in $\TFAb_r$ (for any fixed $r>0$)
are exactly the same as those of fields in $\TD_d$ (for any fixed $d\geq 0$):  in both cases,
they are exactly those sets $\set{\bfd}{S\in\Sigma_1^{\bfd}}$ of Turing degrees
defined by the ability to enumerate a specific set $S\subseteq\omega$.

\begin{thm}
\label{thm:TFAbtoTD}

For each finite rank $r>0$, there is a functorial Turing-computable embedding $(\Phi,\Phi_*)$
of $\TFAb_r$ into $\TD_r$, uniformly in $r$.

\end{thm}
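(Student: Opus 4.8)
The plan is to send a group $G\in\TFAb_r$ to the fraction field of its rational group ring, $\Phi(G)=\operatorname{Frac}(\Q[G])$, realized concretely inside $\overline{\Q(t_1,\dots,t_r)}$: identifying $G$ with a subgroup of $\Q^r$ of rank $r$, set $M_G=\set{t_1^{q_1}\cdots t_r^{q_r}}{(q_1,\dots,q_r)\in G}$, where we fix a real closed subfield of $\overline{\Q(t_1,\dots,t_r)}$ containing $t_1,\dots,t_r$ and choose all rational powers $t_i^q$ within it, and put $\Phi(G)=\Q(M_G)$. Since distinct monomials with exponents in $\Q^r$ are $\Q$-linearly independent in $\overline{\Q(t_1,\dots,t_r)}$, the map $t_1^{q_1}\cdots t_r^{q_r}\mapsto[(q_1,\dots,q_r)]$ identifies $\Q(M_G)$ with $\operatorname{Frac}(\Q[G])$, and in particular $M_G\cong G$ as groups. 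Because $G$ is torsion-free abelian, $\Q[G]$ is an integral domain, so $\Phi(G)$ is a field; it has characteristic $0$, and transcendence degree exactly $r$ over $\Q$, since $G$ has a free subgroup $\cong\Z^r$ of finite index, making $\Q[G]$ integral over a Laurent polynomial subring $\cong\Q[\Z^r]$ whose fraction field already has transcendence degree $r$. The construction never refers to $r$ and is carried out by finite manipulation of formal $\Q$-linear combinations of group elements, and of pairs of such (numerators and denominators), with equalities decided from the atomic diagram of $G$; so it is a Turing operator, uniformly in $r$. The feature that makes it an embedding mirrors the group case: the existential type of $\bar t=(t_1,\dots,t_r)$ in $\Phi(G)$ records exactly which $\Q$-linear combinations of a basis $\bar a$ of $G$ are present, via the equivalence ``$t_1^{m_1}\cdots t_r^{m_r}$ has an $n$th root in $\Phi(G)$'' $\iff$ ``$\tfrac1n(m_1a_1+\dots+m_ra_r)\in G$''.

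For functoriality, an isomorphism $f\colon G\to G'$ induces a ring isomorphism $\Q[f]\colon\Q[G]\to\Q[G']$ (functoriality of the group-ring construction; on monomials, $t^{\bar q}\mapsto t^{f(\bar q)}$), which extends uniquely to the fraction fields. Reading $f$ off the oracle $G\oplus f\oplus G'$, this extension is computed by the same kind of finite manipulation, giving a Turing functional $\Phi_*$ for which $\Phi_*^{G\oplus f\oplus G'}$ is an isomorphism $\Phi(G)\to\Phi(G')$. Since $\Q[-]$ and $\operatorname{Frac}(-)$ are functorial, $\Phi_*$ respects composition and preserves identities, so $(\Phi,\Phi_*)$ is a computable functor $\Iso{\TFAb_r}\to\Iso{\TD_r}$; in particular $G\cong G'$ implies $\Phi(G)\cong\Phi(G')$, with the isomorphism produced effectively. (By Theorem~\ref{thm:equivalence} the same data may be read as a uniform effective interpretation of $\TD_r$ in $\TFAb_r$.)

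It remains to show that $\Phi$ preserves non-isomorphism: $\Phi(G)\cong\Phi(G')$ implies $G\cong G'$. This is the heart of the matter, and the plan is to recover $G$ up to isomorphism from the abstract field $\Phi(G)$. There are two equivalent handles. First, a group ring $\Q[G]$ of a torsion-free abelian group has only trivial units, so $\Q[G]^\times=\Q^\times\cdot G$ and $G\cong\Q[G]^\times/\Q^\times$; thus it would suffice to recover the subring $\Q[G]\subseteq\Phi(G)$ in an isomorphism-invariant way. Second, the monomial valuation $v$ on $\Phi(G)$---sending a $\Q$-linear combination of monomials to the least (in a fixed ordering of $\Q^r$) of the exponents that occur---has residue field $\Q$ and value group $G$, so it would suffice to single out $v$ among the valuations of $\Phi(G)$ that are trivial on $\Q$. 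Neither the subring nor the valuation is obviously canonical: $\Phi(G)$ carries many other subrings and many other valuations---for example $\bigcup_n\Q(t^{1/n})$ also admits a valuation, trivial on $\Q$, with residue field $\Q$ and value group $\Z$---so the real work is an intrinsic characterization, presumably via the presentation of $\Phi(G)$ as a directed union $\varinjlim_i\Q(\bar s^{(i)})$ of rational function fields in $r$ variables with monomial transition maps, isolating $v$ by a divisibility-maximality property of its value group. I expect this step---showing that the recovered group depends only on the isomorphism type of $\Phi(G)$ and not on the presentation---to be the main obstacle; the construction of $(\Phi,\Phi_*)$ and the verification of the functor axioms are routine.
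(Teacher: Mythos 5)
Your construction of $(\Phi,\Phi_*)$ is exactly the paper's: $\Phi(G)$ is the fraction field of the group algebra $\Q[G]$, built from monomials indexed by group elements, with functoriality coming from functoriality of $\Q[-]$ and $\operatorname{Frac}(-)$; the integral-domain, transcendence-degree, computability and functor-axiom checks all match the paper's proof. But the theorem's hard content is precisely the step you leave as a plan: showing $\Phi(G)\cong\Phi(G')\Rightarrow G\cong G'$. Your two proposed handles do not yet give a proof, and you correctly sense why: neither the subring $\Q[G]\subseteq\Phi(G)$ nor the monomial valuation is isomorphism-invariant, and no intrinsic characterization is supplied. The difficulty is real, not just presentational: an abstract isomorphism $f:\Phi(G)\to\Phi(G')$ need not carry monomials anywhere near monomials (already for $G=\Z$, the automorphism $t\mapsto t+1$ of $\Q(t)$ destroys the monomial structure and the monomial valuation), so your observation that the existential type of the distinguished basis $\bar t$ encodes $G$ does not transfer across an arbitrary isomorphism, and a "divisibility-maximality" characterization of the value group is exactly what is missing. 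As it stands, the proposal proves only the easy directions (well-definedness and functoriality), i.e., it establishes a computable functor but not that it is an embedding.

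The paper's resolution is notably different from your hoped-for canonical recovery, and is worth comparing. Rather than recovering $G$ intrinsically from the field, it fixes an arbitrary isomorphism $f:\Phi(G)\to\Phi(\Gtilde)$ and studies the subgroup $H=\{g\in G: f(m(g))\text{ is a monomial}\}$ (which depends on $f$, and may be a proper subgroup, as the $t\mapsto t+1$ example shows). It proves $H$ is pure and that $f$ restricts to an isomorphism $H\cong\Htilde$; then the key lemma, using Schinzel's bound relating the multiplicity of a nonzero root of a polynomial to its number of terms, shows that the nonzero elements of $G\setminus H$ are bounded away from $0$ in $\Q^r$, so the image of $G$ in $\Q^r/\bar H$ is discrete and $G\cong H\oplus\Z^k$; symmetrically $\Gtilde\cong\Htilde\oplus\Z^{\tilde k}$, and a rank count gives $k=\tilde k$, hence $G\cong\Gtilde$. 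In other words, the group is recovered only up to a free direct summand whose rank is controlled, which sidesteps the non-canonicity you ran into; without this (or some substitute for it), your argument has a genuine gap at its central step. A smaller technical point: your concrete realization inside $\overline{\Q(t_1,\ldots,t_r)}$ presupposes an embedding $G\hookrightarrow\Q^r$, i.e., a basis, which is not uniformly computable from the atomic diagram; like the paper, you should (and implicitly do) run the Turing operator on the abstract group-algebra presentation and use the $\Q^r$ picture only in the verification.
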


The image of this functor is not all of $\TD_r$, and we do not claim that it has
a computable inverse functor.  So, this theorem explains this similarity between the spaces to some extent, but not completely. The rest of this section is dedicated to the proof of Theorem \ref{thm:TFAbtoTD}.

\begin{pf}

The input to the operator $\Phi$ consists of a rank $r>0$ and the atomic diagram (denoted $G$)
of a group from $\TFAb_r$.  Now, $\Phi$ is required to compute the atomic diagram of a field in $\TD_r$.  To avoid confusion, we write $\oplus$ for addition in $G$, and $+$ and $\cdot$ for the field operations in the output.  
Given a group $G$ with universe $\omega$, $\Phi$ 
names a corresponding collection of field elements $\la Y_n\ra_{n\in\omega}$ that we will call \emph{monomials}.  The multiplicative structure of the field on these elements is exactly that given by the group:  $Y_i\cdot Y_j = Y_k$ if and only if $i\oplus j=k$ in $G$.  We therefore view the indices
$i$, $j$, and $k$ as elements of $G$. (If $e\in G$ is the group identity element,
then $Y_e$ will be the element $1$; i.e., the multiplicative identity of the field.)

The elements of the field $\Phi(G)$ represent 
quotients
$$\frac{\sum a_n Y_n}{\sum b_nY_n}$$
of finite $\Q$-linear combinations of these monomials (including $Y_e=1$)
for which some $b_n\neq 0$.  Addition of two $\Q$-linear combinations
is done by treating the monomials as indeterminates.  Multiplication
uses the structure on the monomials:
\begin{equation}
\label{eq:mult}
\left(\sum a_m Y_m\right)\cdot\left(\sum b_nY_n\right) = \sum_k \left(\sum_{m\oplus n =k} a_mb_n\right)Y_k.
\end{equation}

This makes the $\Q$-linear combinations into a ring $R$, and
Lemma \ref{lemma:domain} below shows it to be an integral domain.

\begin{lemma}
\label{lemma:domain}

The ring $R$ described above is an integral domain.  

\end{lemma}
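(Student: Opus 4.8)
The plan is to recognize $R$ as the group algebra $\Q[G]$ and to exploit the fact that $G$, being torsion-free abelian of finite rank, carries a translation-invariant linear order. First I would recall from the discussion above that $G$ embeds into $\Q^r$; fixing such an embedding, I transport to $G$ a total order $<$ compatible with $\oplus$ --- for instance the lexicographic order on $\Q^r$, or the pullback of the usual order on $\R$ under any $\Q$-linear injection $\Q^r\hookrightarrow\R$. The only property I need is that $(G,\oplus,<)$ is an ordered abelian group: $i<j$ implies $i\oplus k<j\oplus k$ for every $k\in G$ (and hence $<$ admits cancellation).

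Next, given nonzero elements $\alpha=\sum a_m Y_m$ and $\beta=\sum b_n Y_n$ of $R$, I consider their supports $\{m : a_m\neq 0\}$ and $\{n : b_n\neq 0\}$, which are finite and nonempty. Let $m_0$ and $n_0$ be their respective $<$-maxima. By the multiplication rule \eqref{eq:mult}, the coefficient of $Y_{m_0\oplus n_0}$ in $\alpha\cdot\beta$ is $\sum_{m\oplus n=m_0\oplus n_0} a_m b_n$. I claim the only summand with $a_m b_n\neq 0$ is the one with $(m,n)=(m_0,n_0)$: if $a_m b_n\neq 0$ then $m\le m_0$ and $n\le n_0$, so $m\oplus n\le m_0\oplus n_0$; and if $m<m_0$ then $m\oplus n<m_0\oplus n\le m_0\oplus n_0$, a contradiction, so $m=m_0$, whence cancellation gives $n=n_0$. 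Therefore that coefficient equals $a_{m_0} b_{n_0}\neq 0$, so $\alpha\cdot\beta\neq 0$.

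This shows $R$ has no zero divisors. Since $R$ is commutative --- the formula \eqref{eq:mult} is symmetric in $\alpha$ and $\beta$, using commutativity of $\oplus$ --- with multiplicative identity $Y_e=1$, and is not the zero ring, it is an integral domain.

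The step requiring the most care is the choice of the order, but even there nothing is subtle: finite-rank torsion-free abelian groups are visibly orderable through their embedding into $\Q^r$. (More generally this is the classical proof that the group algebra of any orderable group has no zero divisors; finite rank is all we need here and keeps the order concrete and computable.)
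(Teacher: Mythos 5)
Your proof is correct and follows essentially the same route as the paper: both arguments embed $G$ into $\Q^r$ via a basis, use the (translation-invariant) lexicographic order, and observe that the coefficient of $Y_{m_0\oplus n_0}$ in the product is exactly $a_{m_0}b_{n_0}\neq 0$. The only difference is cosmetic --- you phrase it explicitly as the classical ``group algebra of an ordered group has no zero divisors'' argument, which the paper carries out directly without naming it.
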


\begin{pf}

Consider the product in Equation \ref{eq:mult}.  Assume that all coefficients $a_m$ and $b_n$ are nonzero, and that the factors on the left are both 
nonzero (so neither sum is empty).  Fixing a linearly independent
set $U=\{ u_1,\ldots,u_r\}$ in $G$, we can express each index $m$ and $n$ from $G$
as a $\Q$-linear combination of $U$:  say $m=\sum p_{mi}u_i$ and $n=\sum q_{ni}u_i$.
Thus each $m$ corresponds to $(p_{m1},\ldots,p_{mr})\in\Q^r$,
and each $n$ to $(q_{n1},\ldots,q_{nr})\in\Q^r$.
Ordering these $r$-tuples lexicographically 
(and comparing individual coefficients under the usual order $<$ on $\Q$),
fix the particular $m_0$ for which $(p_{m_01},\ldots,p_{m_0r})$ is the
maximum of the set $\set{(p_{m1},\ldots,p_{mr})}{a_m\neq 0}$
and the $n_0$ for which $(q_{n_01},\ldots,q_{n_0r})$ is the
maximum of the set $\set{(q_{n1},\ldots,q_{nr})}{b_n\neq 0}$.
Let $k_0=m_0\oplus n_0$.  Then the coefficient of $Y_{k_0}$
in the product (on the right side of \ref{eq:mult}) is simply $a_{m_0}b_{n_0}$:
no other pair $(m,n)$ of these indices can have $m\oplus n=k_0$,
because $\oplus$ respects the lexicographic order we have chosen.
Since $a_{m_0}b_{n_0}\neq 0$, the product on the right is nonzero.  This proves the lemma. 
\qed\end{pf}

We define addition and multiplication on the formal quotients of ring elements $\frac{A}{B}$ (for $B\not= 0$) in the obvious way. We also define the obvious congruence relation $\sim$, where $\frac{A}{B} \sim \frac{A'}{B'}$ iff $AB' = A'B$.  Everything is computable in $G$.     
In this way, we obtain the quotient field of the integral domain, whose elements are the $\sim$-equivalence classes of formal quotients $\frac{A}{B}$.  We build an isomorphic copy $F = \Phi(G)$ with universe $\omega$, and computable in $G$.

To see that the field $F$ has transcendence degree $r$, let $U=\{ u_1,\ldots,u_r\}$
once again be linearly independent in $G$.  Set $X_i=Y_{u_i}$ for each $i\leq r$.
Now each $v\in G$ is a $\Q$-linear combination of the elements of $U$,
say $dv=\bigoplus_{i} c_iu_i$ using integers $c_i$ and $d\neq 0$, so the corresponding
$Y_v^d = Y_{u_1}^{c_1}\cdots Y_{u_r}^{c_r}=\prod X_i^{c_i}$.  Expressing every $Y_v$ in this form,
we see that the field is generated by rational powers of $X_1,\ldots,X_r$,
and thus has transcendence degree $\leq r$.
(This also explains why we refer to the $Y_i$'s as \emph{monomials}:
they are actual monomials in the roots of the basis elements $X_i$.)

We also claim that $\{ X_1,\ldots,X_r\}$ is algebraically independent in $F$.
Suppose some polynomial relation holds on these elements.
We may re-express each term in the relation as a single $Y_i$, e.g.,
$cX_1^2X_2^5 = cY_{u_1}^2Y_{u_2}^5=cY_{2u_1}Y_{5u_2}=cY_{2u_1\oplus 5u_2}$,
so that the entire polynomial equation becomes linear (over $\Q$) in the $Y_i$'s:  
We have \[0 = \sum c_1Y_i ,\] 
which implies that every $c_i=0$.  However,
since $U$ is independent in $G$, distinct terms $X_1^{p_1}\cdots X_r^{p_r}$ yield
distinct monomials $Y_k$ under this process, with $k=\sum p_iu_i$.
(Otherwise, we would have a nontrivial $\Q$-linear relation on $U$.)
Therefore, there were no repeated terms to combine when the polynomial equation was re-expressed as the $\Q$-linear equation $0 = \sum c_iY_i$, and so the coefficients $c_i$ (which must all be zero) are the original coefficients from the polynomial equation on $X_1,\ldots,X_r$.
Thus, these elements are indeed algebraically independent.

It may now be helpful to view $X_1,\ldots,X_r$ as algebraically independent positive real numbers,
and to assume that all roots $X_i^{\frac1d}$ used here are positive and real as well.
Then the entire field $F$ can be considered as a subfield of the real numbers.
It is clear that this construction is functorial, and that the functor is computable.
Indeed, if $\Phi_*$ is given a group isomorphism $g:G_0\to G_1$, then
$\Phi_*^{G_0\oplus g\oplus G_1}$ simply maps each $Y_n\in F_0=\Phi(G_0)$
to $Y_{g(n)}\in F_1=\Phi(G_1)$, and then extends this map to all $\Q$-linear combinations
of the monomials $Y_n$ in $F_0$ and finally to their quotients.  Since the monomials generate
$F_0$, this is entirely effective, and it preserves the identity and composition.

Functors must preserve the isomorphism relation on structures, of course,
since they map isomorphisms to isomorphisms.  However,
the theorem also requires the map $\Phi$ to preserve non-isomorphism:
$$ G_0\cong G_1 \iff \Phi(G_0)\cong\Phi(G_1),$$
so that $\Phi$ will be a Turing-computable embedding of $\TFAb_r$
into $\TD_r$ as well as being functorial.

So, it remains to show that when $G\not\cong\Gtilde$, we must get non-isomorphic fields $F=\Phi(G)$ and $\Ftilde=\Phi(\Gtilde)$ as outputs. From here on, let $f:F \to \Ftilde$ be a field isomorphism.  We need to show $G\cong \Gtilde$.  We continue to use the transcendence basis
$X_1,\ldots,X_r$ of $F$ built from a basis $U$ for $G$, and the transcendence basis
$\Xtilde_1,\ldots,\Xtilde_r$ for $\Ftilde$ built in the analogous manner from some basis of $\Gtilde$.
The basis $U$ of $G$ naturally induces an embedding from $G$ to $\Q^r$, so we consider $G$ as a subgroup of $\Q^r$ from now on (and similarly for $\Gtilde$).

\begin{defn}
\label{subgroups}\

\begin{enumerate}

\item Let $m:G\to F$ be the function that sends an element in $G$ to its associated monomial in $F$; i.e., $m(a_1,\cdots, a_r) =X_1^{a_1}\cdots X_r^{a_r} $. 

\item  Let $H \subseteq G$ be the set of group elements such that the corresponding monomial in $F$ is sent by $f$ to a monomial; i.e., $H = \{g\in G \mid f(m(g))\mbox{ is a monomial of $\Ftilde$}\}$.

\end{enumerate}
Similarly define $\tilde m:\Gtilde \to \Ftilde$ 
and $\Htilde\subseteq \Gtilde$.

\end{defn}

Recall that a subgroup $H$ of an additive group $G$ is said to be \emph{pure} if for every $g\in G$ and $k\in \Z$, if $kg$ is in $H$, then $g$ is in $H$.

\begin{prop}
\label{subgroup-isom}

For the groups $H\subseteq G$ and $\Htilde\subseteq \Gtilde$ in Definition \ref{subgroups}, $H$ is a pure subgroup of $G$, and $\Htilde$ is a pure subgroup of $\Gtilde$. Furthermore, $f$ maps $m(H)$ onto $\tilde m(\Htilde)$.  The restriction of $f$ taking $m(H)$ onto $\tilde m(\Htilde)$, is a bijection that respects field multiplication.  Hence, $f$ maps $H$ isomorphically onto $\Htilde$.

\end{prop}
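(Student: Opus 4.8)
The plan is to analyze which field elements of $F$ can be mapped to monomials of $\Ftilde$ under the isomorphism $f$, and to extract the group structure from this. First I would observe that the set of all monomials of $F$, together with $0$, forms exactly the set of products $c\cdot(\text{monomial})$? No — more precisely, the monomials $Y_n$ are exactly the nonzero elements $y\in F$ such that $y$ is a unit of the subring generated by all $Y_n$ and $Y_n^{-1}$, and such that every expression of $y$ as a $\Q$-linear combination of monomials is a single term. The key algebraic fact I want is: an element $y$ of $F$ is (a $\Q^\times$-multiple of) a monomial if and only if both $y$ and $y^{-1}$ lie in the multiplicative span generated by $X_1,\dots,X_r$ and their roots — equivalently, $y$ has a representation $c\prod X_i^{q_i}$ with $c\in\Q^\times$ and $q_i\in\Q$. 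Since $f$ is a field isomorphism, it must carry such elements to elements of $\Ftilde$ with the analogous property, i.e. to $\Q^\times$-multiples of monomials of $\Ftilde$; absorbing the rational scalar (which $f$ fixes, as $f$ fixes $\Q$) shows $f(Y_n)$ is a rational multiple of a monomial $\Ytilde_{n'}$, and I would normalize $H$ to track exactly the indices where this rational multiple is $1$, or more robustly, work with the group of $\Q^\times$-multiples of monomials modulo $\Q^\times$.

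Next, the set $m(G)\cdot\Q^\times$ (nonzero rational multiples of monomials) is a subgroup of $F^\times$, and $m$ induces a group isomorphism $G\cong \big(m(G)\cdot\Q^\times\big)/\Q^\times$ because $Y_i Y_j = Y_{i\oplus j}$ and distinct elements of $G$ give $\Q$-linearly independent monomials (hence distinct cosets mod $\Q^\times$). Restricting $f$ to the subgroup of $F^\times$ consisting of rational multiples of monomials that $f$ sends to rational multiples of monomials of $\Ftilde$, and passing to the quotient by $\Q^\times$ on both sides, gives a group homomorphism from a subgroup of $G$ (namely $H$, suitably defined) into $\Gtilde$; symmetrically $f^{-1}$ gives one from $\Htilde$ into $G$, and these are mutually inverse on $m(H)$ and $\tilde m(\Htilde)$, so $f$ carries $m(H)$ bijectively onto $\tilde m(\Htilde)$ respecting multiplication, hence $H\cong\Htilde$ as groups.

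For purity: if $g\in G$ and $kg\in H$ for some nonzero integer $k$, then $f(m(kg)) = f(m(g)^k) = f(m(g))^k$ is a monomial of $\Ftilde$. So $f(m(g))$ is an element of $\Ftilde$ whose $k$-th power is a monomial; since $\Ftilde$ sits inside a field where we may take a real $k$-th root (using the embedding into $\R$ arranged in the construction), $f(m(g))$ is itself, up to a root of unity, a $\Q^\times$-multiple of a monomial — and as $f(m(g))\in\Ftilde\subseteq\R$ and $k$-th roots of positive reals are positive reals, $f(m(g))$ is a positive real that is a rational multiple of a monomial of $\Ftilde$, so $g\in H$ after the normalization. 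I expect the main obstacle to be the clean statement and verification of the algebraic characterization of ``being a rational multiple of a monomial'' purely in field-theoretic terms invariant under $f$ — in particular ruling out that $f$ sends some non-monomial to a monomial, which is really the content that makes $H$ and $\Htilde$ match up; the purity argument and the group isomorphism are then comparatively routine bookkeeping with roots of unity and the real-embedding normalization.
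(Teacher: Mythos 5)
Your first paragraph is where the proposal goes astray. The ``key algebraic fact'' you want --- that a field isomorphism $f\colon F\to\Ftilde$ must carry $\Q^\times$-multiples of monomials to $\Q^\times$-multiples of monomials --- is false, and it is also not what the proposition asserts. Already in rank $1$, with $G=\Gtilde=\Z$ and $F=\Ftilde=\Q(X)$, the automorphism fixing $\Q$ and sending $X\mapsto X+1$ maps the monomial $X$ to $X+1$, which is not a rational multiple of any $X^k$, and its inverse sends a non-monomial to a monomial: being expressible as $c\prod X_i^{q_i}$ is a property relative to the chosen generators $X_i$, not an isomorphism-invariant one. (A sanity check that no such invariance can be available: if it were, the induced map would give $G\cong\Gtilde$ outright and the hard part of Theorem \ref{thm:TFAbtoTD} --- Lemma \ref{small-coord} and Schinzel's lemma --- would be superfluous.) Relatedly, you say the ``real content'' is ruling out that $f$ sends some non-monomial to a monomial; nothing of the sort needs to be (or can be) ruled out. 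In Definition \ref{subgroups}, $H$ is by definition the set of $g$ with $f(m(g))$ a monomial, and $\Htilde$ is defined symmetrically via $f^{-1}$, so ``$f$ maps $m(H)$ onto $\tilde m(\Htilde)$'' is essentially bookkeeping: if $h\in H$ then $f(m(h))$ is a monomial whose $f^{-1}$-image $m(h)$ is a monomial, hence $f(m(h))\in\tilde m(\Htilde)$, and conversely $f^{-1}(a)\in m(H)$ for every $a\in\tilde m(\Htilde)$. Your second paragraph (restrict $f$, note injectivity and multiplicativity, check the restrictions of $f$ and $f^{-1}$ are mutually inverse, conclude $H\cong\Htilde$) is exactly this and matches the paper; it neither needs nor should be routed through the false invariance claim.

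Your purity argument is in substance the paper's ($kg\in H$ gives $f(m(g))^k=f(m(kg))$ a monomial, so $f(m(g))$ should be one), and your worry about signs and scalars is a legitimate subtlety which the paper's one-line justification passes over. But your handling of it is incomplete on two counts. First, redefining $H$ via $\Q^\times$-multiples of monomials (mod $\Q^\times$) changes Definition \ref{subgroups}, so as written you would be proving a variant of the stated proposition; you would need to say so and check that the later lemmas still run with the modified $H$. Second, the appeal to real $k$-th roots does not by itself close the gap: from $y\in\Ftilde$ and $y^k$ a (rational multiple of a) monomial, you still must show that $y$, which equals $\pm$ the real $k$-th root, is again of the form (rational constant times) $Y_v$ with $v\in\Gtilde$ --- for instance by a support/leading-term computation in the group ring $\Q[\Gtilde]$, or by unique factorization in $\Q[X_1^{1/d},\ldots,X_r^{1/d}]$ together with the fact that $\Q$ is relatively algebraically closed in $\Ftilde$. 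That justification is the actual content of the purity step, and it is missing from the proposal.
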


\begin{proof}
Suppose $g\in G$ and $kg\in H$ for some $k \in \N$. Then $m(g)^k = m(kg)$, so $f(m(g))^k = f(m(kg))$ is a monomial. However, if $f(m(g))$ were not a monomial, then $f(m(g))^k$ would also not be a monomial. Thus, $f(m(g))$ must be a monomial, and $g\in H$. Hence, $H$ is a pure subgroup of $G$. Similarly, $\Htilde$ is a pure subgroup of~$\Gtilde$. 
By definition, every element of $m(H) \subseteq m(G)$ is a monomial. Thus, for every \linebreak $h\in H$, $f(m(H))$ and $f^{-1}(f(m(h))) = m(h)$ are both monomials, so $f(m(h))$ is in $\tilde m(\Htilde)$.  Then $f$ maps $m(H)$ to $\tilde m(\Htilde)$ in a well-defined way.  Since $f$ is injective and respects multiplication, the restriction that takes $m(H)$ to $m(\Htilde)$ is also injective and respects multiplication. For $a \in \tilde m(\Htilde)$, $f^{-1}(a)$ is a monomial, and ${f(f^{-1}(a)) = a}$ is also a monomial. Thus, $f^{-1}(a) \in m(H)$.  This shows that $f$ maps $m(H)$ onto $m(\Htilde)$.
\end{proof}


Let $j$ be the rank of $H$ and $\tilde H$.  To show that $G\cong \tilde G$, we will show that $G\cong H\oplus Z^k$ and $\tilde G\cong \tilde H\oplus Z^k$, for $k = r - j$.  We will use the following notation.  For an element $g\in G$, let $|g|$ be the usual $r$-dimensional Euclidean norm of $g\in \Q^r$, and endow $G$ with the topology induced by the Euclidean distance. Let $\bar H$ be the $\Q$-span of $H$ in $\Q^r$. Since $H$ is a pure subgroup of $G$, we have that $\bar H\cap G = H$.  Let $\pi:\Q^r \to \Q^r/\bar H$ be the natural quotient map.  We will need a lemma by Schinzel.

\begin{lem}[{\cite[Lemma 1]{Sch}}]
\label{schinzel-lem}

Let $K$ be a field of characteristic 0. If $g\in K[x] \setminus \{0\}$ has in the algebraic closure of $K$ a nonzero root of multiplicity at least $m$, then the polynomial $g$ has at least $m+1$ terms (with nonzero coefficients). 

\end{lem}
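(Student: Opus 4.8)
The plan is to prove Lemma~\ref{schinzel-lem} by induction, exploiting two features of characteristic~$0$: a root $\xi$ of multiplicity $\geq m$ of $g$ is a common root of $g, g', \dots, g^{(m-1)}$, and passing to the derivative neither cancels nor annihilates any term of positive degree.

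First I would normalize the polynomial: write $g = x^e \tilde g$ with $\tilde g(0) \neq 0$. Because $\xi \neq 0$, the factor $x^e$ is coprime to $(x-\xi)^m$, so $\tilde g$ still has $\xi$ as a root of multiplicity at least $m$; and $\tilde g$ has exactly as many nonzero terms as $g$ (they differ only by the shift $x^e$). Hence it suffices to prove the bound for polynomials with nonzero constant term, and I assume $g(0)\neq 0$ from now on.

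The induction is on $m$. For $m=1$: a polynomial with a nonzero constant term and a nonzero root is not a monomial, so it has at least $2 = m+1$ terms. For $m\geq 2$: since the characteristic is $0$ and (as $g$ has a root but is not a nonzero constant) $\deg g \geq 1$, the derivative $g'$ is nonzero, and $\xi\neq 0$ is a root of $g'$ of multiplicity at least $m-1$; by the inductive hypothesis $g'$ has at least $m$ terms. Now I invoke the key observation: every positive-degree term $cx^e$ of $g$ produces the term $ce\,x^{e-1}$ of $g'$ with $ce\neq 0$ (here characteristic $0$ is used a second time), and distinct positive exponents yield distinct exponents after differentiation, so these terms are in bijection with the terms of $g'$. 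Since in addition $g$ has a nonzero constant term, the number of terms of $g$ equals the number of terms of $g'$ plus one, hence is at least $m+1$.

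The only point requiring care is this last bijection, i.e.\ checking that differentiation in characteristic $0$ neither merges nor kills terms of positive degree; once that is in hand there is no real obstacle, and the same two characteristic-$0$ facts carry the whole argument. (An alternative, non-inductive route: the $m$ conditions $g^{(j)}(\xi)=0$, $0\le j\le m-1$, form a homogeneous linear system in the nonzero coefficients of $g$; substituting $d_i = c_i\xi^{e_i}$ turns it into $\sum_i d_i\,(e_i)_j = 0$, where $(e)_j = e(e-1)\cdots(e-j+1)$ is monic of degree $j$ in $e$. If $g$ had at most $m$ terms, the resulting square coefficient matrix would be that of the monic polynomials $(e)_0,\dots,(e)_{t-1}$ evaluated at the distinct exponents of $g$, hence column-equivalent to a Vandermonde matrix with nonzero determinant, forcing all $d_i = 0$ --- a contradiction.)
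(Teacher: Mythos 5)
Your proof is correct, and it fills a gap the paper deliberately leaves open: the authors do not prove this lemma at all, but quote it as Lemma~1 of Schinzel's paper \emph{On the number of terms of a power of a polynomial} and use it as a black box. Your induction on $m$ is sound: the reduction to $g(0)\neq 0$ is harmless because $\xi\neq 0$ and stripping the factor $x^e$ changes neither the multiplicity at $\xi$ nor the number of terms; the base case works because a one-term polynomial with nonzero constant term is a nonzero constant and has no roots; and in the inductive step, $(x-\xi)^m \mid g$ gives $(x-\xi)^{m-1}\mid g'$ (valid in any characteristic), while characteristic $0$ guarantees both that $g'\neq 0$ and that differentiation sends the positive-degree terms of $g$ bijectively onto the terms of $g'$ (coefficients $ce\neq 0$, exponents stay distinct), so the extra constant term yields the count $\#\mathrm{terms}(g)=\#\mathrm{terms}(g')+1\geq m+1$. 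One small bookkeeping point worth making explicit if you write this up: the inductive hypothesis is applied to $g'$, which need not have nonzero constant term, so you should phrase the induction as proving the unrestricted statement $L(m)$ for each $m$, using the normalization only inside the step (your reduction is uniform in $m$, so this is immediate). Your alternative linear-algebra argument is also correct: with $t\leq m$ terms, the $t$ conditions $\xi^j g^{(j)}(\xi)=0$ for $j<t$ give a square system in the $d_i=c_i\xi^{e_i}$ whose matrix of falling factorials $\bigl[(e_i)_j\bigr]$ is unitriangularly equivalent to a Vandermonde matrix in the distinct exponents, hence invertible, forcing all $c_i=0$ --- a contradiction; this is essentially Schinzel's own style of argument and needs no induction.
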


\begin{lem}\label{small-coord}

There is some $\epsilon > 0$ such that for every nonzero $g\in G\setminus H$, $|g| > \epsilon$. 

\end{lem}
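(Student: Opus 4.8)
\textit{Proof proposal.} The plan is to argue by contradiction. If no such $\epsilon$ existed there would be nonzero $g_n\in G\setminus H$ with $|g_n|\to 0$, and the goal is to show that once $|g|$ is small enough, $f(m(g))$ must be a monomial of $\tilde F$ — hence $g\in H$, a contradiction. The engine is the observation that, for $g\in G$, choosing $d\ge 1$ with $dg\in\Z^r$ gives $m(g)^d=Y_{dg}=X_1^{(dg)_1}\cdots X_r^{(dg)_r}$, and therefore
\[
f(m(g))^d=f(X_1)^{(dg)_1}\cdots f(X_r)^{(dg)_r},
\]
a Laurent monomial in the \emph{finitely many fixed} elements $f(X_1),\dots,f(X_r)\in\tilde F$ (which, as $f$ preserves algebraic independence, form a transcendence basis of $\tilde F$).

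First I would bound the orders of the zeros and poles of $f(m(g))$. Write $f(m(g))=A/B$ in reduced form as a quotient of polynomials over $\Q$ in suitable radicals $\tilde Z_1,\dots,\tilde Z_r$ of $\tilde X_1,\dots,\tilde X_r$, and write each $f(X_i)=P_i/Q_i$ in reduced form. From the displayed identity, $A^d$ divides a nonzero rational multiple of $\prod_{(dg)_i>0}P_i^{(dg)_i}\prod_{(dg)_i<0}Q_i^{|(dg)_i|}$, so for every irreducible $\rho\mid A$ the multiplicity $v_\rho(A)$ is a positive integer with $v_\rho(A)\le\sum_i|g_i|\cdot\max_i\bigl(v_\rho(P_i),v_\rho(Q_i)\bigr)\le C\|g\|_1\le C\sqrt r\,|g|$, where $C$ depends only on $f(X_1),\dots,f(X_r)$. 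Choosing $\epsilon<1/(C\sqrt r)$ makes this impossible for nonzero $g$ with $|g|<\epsilon$ unless $A$ is, up to a rational constant, a monomial in the $\tilde Z_j$; applying the same to the numerator of $1/f(m(g))=f(m(-g))$ does the same for $B$, so $f(m(g))=c\,\tilde Y_w$ for some $c\in\Q^\times$ and $w\in\tilde G$ (using $f(m(g))\in\tilde F$).

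The delicate point — and where Schinzel's Lemma~\ref{schinzel-lem} enters — is that $C$ must be \emph{independent of $g$}: a priori the radical extension in which $f(m(g))$ becomes a ratio of polynomials depends on the denominators of the coordinates of $g$, so one must check that refining the radical extension $\tilde Z_j\mapsto\tilde Z_j^{\,k}$ does not inflate the multiplicities $v_\rho(P_i)$. This can be seen either directly (the $k$-th power substitution is étale away from the coordinate hypersurfaces, hence preserves the relevant squarefree factors), or, as in the paper, via Schinzel: whenever $g/d\in G$ one has $f(m(g))=f(m(g/d))^d$, so $A$ is (up to a constant) a perfect $d$-th power, and a non-monomial irreducible factor of $A$ would give $A$ a nonzero root of multiplicity $\ge d$, hence $\ge d+1$ terms, which is impossible for a fixed $A$ and arbitrarily large $d$.

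It remains to eliminate the constant $c$, i.e.\ to show $f(m(g))=\tilde Y_w$. Here I would use that $\tilde F\cap\overline\Q=\Q$ and, via the real-closed-subfield realization of the construction, that $\tilde F$ contains no roots of unity besides $\pm1$. If $w\in\tilde H$ then $f^{-1}(\tilde Y_w)$ is a monomial $Y_v$ of $F$, so $m(g)=c\,m(v)$ and the algebraic independence of $X_1,\dots,X_r$ forces $c=1$ (and $g=v$); if $w\notin\tilde H$ one runs the previous two paragraphs for $f^{-1}$ on the monomial $\tilde Y_w$ — which, since $w\notin\tilde H$, bounds $|w|$ below — and then, since the map $g\mapsto w$ (given by the orders of $f(m(g))$ along the $\tilde X_j$) is $\Q$-linear with $M_f(g)=0\Rightarrow g=0$ in this case, bounds $|g|=|M_f^{-1}w|$ below as well. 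Either way $g$ cannot be arbitrarily small, contradicting $|g_n|\to 0$. The main obstacle is precisely the uniformity of $C$ in the third paragraph; the rest is bookkeeping with the factorization and with the real realization of $F$ and $\tilde F$.
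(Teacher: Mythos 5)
Your argument is genuinely different from the paper's, and its core is sound. You bound, for a \emph{single} small $g$, the multiplicities of the non-monomial irreducible factors of the reduced numerator and denominator of $f(m(g))$ by $C\,\|g\|_1$, using $f(m(g))^d=\prod_i f(X_i)^{(dg)_i}$ with the finitely many fixed elements $f(X_i)$; once $|g|<1/(C\sqrt r)$ this forces $f(m(g))$ to be a rational constant times a monomial $\tilde Y_w$ with $w\in\Gtilde$ (the leading-exponent argument of Lemma \ref{lemma:domain} gives $w\in\Gtilde$). The paper instead assembles \emph{several} small elements of $G\setminus H$ into an element of $H$ via the quotient map $\pi$ modulo the span of $H$, and derives its contradiction by counting terms with Schinzel's Lemma \ref{schinzel-lem}; your route avoids that chain construction entirely. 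The uniformity of $C$ in $g$ is correctly flagged as the delicate point, and the \'etale/squarefree-pullback justification (refining radicals $Z_j\mapsto Z_j^{k}$ does not inflate multiplicities of non-monomial irreducible factors) is valid in characteristic $0$; note, though, that your ``or via Schinzel'' alternative as stated addresses divisibility of $g$ inside $G$ (the analogue of the paper's Step 0.5), not this uniformity, although a Schinzel-type bound could be made to work via the same one-variable specialization the paper uses.

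The genuine gap is in your final paragraph, the elimination of the scalar $c$. The subcase $w\in\Htilde$ is fine, but in the subcase $w\notin\Htilde$ you ``bound $|w|$ below'' by running the argument for $f^{-1}$; that argument only yields that a small $w$ has $f^{-1}(\tilde m(w))$ equal to a constant times a monomial of $F$ --- which you already know, since it equals $c^{-1}m(g)$ --- so no lower bound on $|w|$ follows. Obtaining one would require exactly the mirror statement of the lemma you are proving, and since $M_f$ is $\Q$-linear on the span of its domain and hence Lipschitz, $|w|\le\Lambda|g|$ is itself small: the appeal is circular. In fact, under a strictly literal reading of ``monomial'' in Definition \ref{subgroups} the scalar cannot be eliminated at all: for $G=\Gtilde=\langle 1/p : p>2 \text{ prime}\rangle$, the character $m/n\mapsto(-1)^m$ induces an automorphism of $F$ sending $Y_{1/p}\mapsto -Y_{1/p}$, so arbitrarily small elements would lie outside a strictly-defined $H$ (which then would not even be pure). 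The intended reading --- implicitly used in the paper's own arguments, e.g.\ the purity proof of Proposition \ref{subgroup-isom} and the phrase ``they are not both monomials'' in the paper's proof of this lemma --- is that a monomial is a nonzero rational multiple of some $\tilde Y_w$. With that reading your first two paragraphs already complete the proof, and the third paragraph should simply be deleted.
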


\begin{proof}

If $G = H$, then the statement is vacuously true. Thus, we assume that $G \neq H$. 
Toward a contradiction, assume that for every $\epsilon > 0$ there is some nonzero $g\in G\setminus H$ such that $|g| < \epsilon$. We claim that there is a sequence of elements $g_1, g_2, \cdots, g_k\in  G\setminus H$ such that:
\begin{enumerate}
\item $\pi(g_1), \cdots, \pi(g_{k-1})$ is linearly independent and $\pi(g_k)$ is a $\Q$-linear combination of them.
\item Let $\pi(g_k) = \sum_{j=1}^{k-1} q_j\pi(g_j)$. For every $1 \le i \le k$, let $f(m(g_i)) = \alpha_i/\beta_i$ and $T_i$ be the sum of the numbers of terms in $\alpha_i$ and $\beta_i$. Then $\sum_{j=1}^{k-1}|q_j|T_j < 1$.
\item For each $1\le i\le k$, $g_i$ is not divisible by any $n > 1$ in $G$. Namely, for each $n > 1$, there exists no $x$ with $n x = g_i$. 
\end{enumerate}

We construct the sequence by the following process:

\bigskip
\noindent
\textbf{Step 0}: Pick any $g_1\in G\setminus H$. 

\bigskip
\noindent
\textbf{Step 0.5}: We claim that there is a largest $n\ge 1$ so that $g_1$ is divisible by $n$, so that we can replace $g_1$ by $\frac 1n g_1$ to achieve (3). Since $g_1$ is not in $H$, we know $f(m(g_1))$ is not a monomial. Thus, we write $f(m(g_1)) = \alpha_1/\beta_1$ and let $T_1$ be the sum of the numbers of terms in $\alpha_1$ and $\beta_1$. Suppose there is some $g$ and $n > 1$ with $g_1 = ng$:  we claim that then $n < T_1$. Let $f(m(g)) = \alpha/\beta$, so $\alpha_1/\beta_1 = (\alpha/\beta)^n$. By replacing each $X_1, \cdots, X_r$ by $X^{t_1}, \cdots, X^{t_r}$ with $1 \ll t_1 \ll \cdots \ll t_r \in \N$, we may assume $\alpha(X^{t_1},\cdots, X^{t_r})$, $\alpha_i(X^{t_1},\cdots, X^{t_r})$, $\beta(X^{t_1},\cdots,X^{t_r})$, $\beta_i(X^{t_1},\cdots,X^{t_r})$ have the same number of terms as $\alpha, \alpha_1, \beta, \beta_1$, respectively, and $\alpha(X^{t_1},\cdots, X^{t_r})$ does not divide $\beta(X^{t_1},\cdots,X^{t_r})$. Since $g \notin H$, without loss of generality, assume $\alpha$ is not a monomial and choose a root $\xi$ of $\alpha(X^{t_1},\cdots, X^{t_r})$ that is not a root of $\beta(X^{t_1},\cdots, X^{t_r})$. Then $\xi$ is a root of $\alpha_1(X^{t_1},\cdots, X^{t_r})$ of multiplicity at least $n$. Thus, by Lemma \ref{schinzel-lem}, $\alpha_1$ has at least $n+1$ terms, which establishes $n+1 < T_1$. Hence there must be a largest $n$ that divides $g_1$.

\bigskip
\noindent
\textbf{Step 1}: If the current tuple $\pi(g_1), \pi(g_2), \dots, \pi(g_{i-1})$ is linearly independent, we proceed to find $g_i$. Define $\alpha_j, \beta_j, T_j$ as in (2) and $Q_j = 1/(iT_j)$. Let $S$ be the pure subgroup of $G/H$ generated by $\pi(g_1),\cdots, \pi(g_{i-1})$ and let $S_0 = \{\sum q_i\pi(g_i) : (\forall i) |q_i| < Q_i\}$. Then $S_0$ is an open set in $S$, so its preimage $\pi^{-1}(S_0)$ is also an open set in $\pi^{-1}(S)$. Thus, there is some $\epsilon_i$ such that the ball $B$ of radius $\epsilon_i$ around the origin satisfies $B \cap \pi^{-1}(S)\subset \pi^{-1}(S_0)$. By assumption, there is some $g_i \in G \setminus H$ with $|g_i| < \epsilon_i$. Use this as our next $g_i$. 

\bigskip
\noindent
\textbf{Step 1.5}: If $g_i$ is divisible by some $n > 1$, repeat the argument in Step 0.5: let $n >1$ be the largest number dividing $g_i$, and replace $g_i$ by $\frac 1n g_i$. 

\bigskip
\noindent
\textbf{Step 2}: If the current tuple $\pi(g_1), \pi(g_2), \dots, \pi(g_i)$ is linearly independent, return to Step 1 and find the next $g_{i+1}$. If $\pi(g_1), \pi(g_2), \dots, \pi(g_i)$ is linearly dependent, return the tuple. As the range of $\pi$ is a quotient of $\Q^r$, it is finite-dimensional.  Thus $\pi(g_1), \pi(g_2), \dots, \pi(g_i)$ must eventually be linearly dependent, for some $i$, so the process will halt. 

\bigskip

Note that the Step 1 and 2 loop guarantees (1), and Step 0.5 and 1.5 guarantee (3). Thus, we only need to check that the constructed sequence satisfies (2). As $\pi(g_1), \cdots, \pi(g_{k-1})$ is linearly independent and $\pi(g_1), \cdots, \pi(g_{k})$ is linearly dependent, there is a unique way to write $\pi(g_k) = \sum_{j=1}^{k-1} q_j\pi(g_j)$. By the choice in Step 1 (and the fact that Step 1.5 will only decrease the norm of $g_k$ so will not affect the containment), we have $g_k \in B \cap \pi^{-1}(S)\subset \pi^{-1}(S_0)$, so $\pi(g_k) \in S_0$.  This means that $\pi(g_k)$ can be written as a linear combination of $\pi(g_j)$ where the $j$-th coefficient is less than $Q_j$. However, such a linear combination is unique, so we must have $|q_j| < Q_j$ for every $j$ (by the choice of $S_0$) and $\sum_{j=1}^{k-1}|q_j|T_j < \sum_{j=1}^{k-1}Q_jT_j < \sum_{j=1}^{k-1}1/k < 1$.

Now, we have a sequence of nonzero $g_1, g_2, \cdots, g_k\in  G\setminus H$ satisfying (1) to (3). By clearing denominators in the $q_i$, we can find some $n_i\in \Z$ such that $n_k \neq 0$ and $\sum n_i\pi(g_i) = 0$. By replacing $g_i$ with $-g_i$ if necessary, we will assume each $n_i \ge 0$ for simplicity. 
From (2), we have $\sum_{i=1}^{k-1}(n_i/n_k)T_j < 1$. 

Define $h = \sum n_ig_i$. We have $\pi(h) = 0$, so $h \in \bar H$, but also $h \in G$, so we have $h\in H$. Thus, $f(m(h))$ is a monomial. Now working in the field, we have
$$ m(h) = \vec X^h = \vec X^{n_ig_i} = \prod (\vec X^{g_i})^{n_i} = \prod(m(g_i))^{n_i}.$$
Taking $f$, we then have
$$ f(m(h)) = \prod (\alpha_i/\beta_i)^{n_i}.$$

Now, replacing each $X_i$ by some appropriate $X_i^{s_i}$, we may assume that $\alpha_i, \beta_i \in \Q[X_1,\ldots,X_r]$ while keeping the number of terms in $\alpha_i$ and $\beta_i$ the same as before. We can further replace each $X_i$ by $X^{t_i}$ some $1 \ll t_1 \ll \cdots \ll t_r \in \N$, so that $\alpha_i(X^{t_1},\cdots, X^{t_r})$ and $\beta_i(X^{t_1},\cdots,X^{t_r})$ have the same number of terms as $\alpha_i$ and $\beta_i$, respectively. For notational simplicity, we will from now on write $\alpha_i = \alpha_i(X^{t_1},\cdots, X^{t_r}) \in \Q[X]$, etc.

Since $\alpha_k/\beta_k \neq 1 $ and they are not both monomials, by taking the reciprocal if necessary, there must be some root $0 \neq \xi \in \C$ of $\alpha_k$ that is not a root of $\beta_k$. Since each $\beta_i$ has at most $T_i$ terms, by Lemma \ref{schinzel-lem}, if $\xi$ is a root of $\beta_i$, its multiplicity is at most $T_i$. Thus, as a root of $\prod (\alpha_i/\beta_i)^{n_i}$, $\xi$ has multiplicity at least $n_k-\sum_{i = 1}^{k-1} n_iT_i$. Note that $n_k-\sum_{i = 1}^{k-1} n_iT_i = n_k(1-\sum_{i = 1}^{k-1} (n_i/n_k)T_i) > 0$ by (2). Thus, $\xi$ is a root of $f(m(h)) = \prod (\alpha_i/\beta_i)^{q_i}$. However, $f(m(h)) = f(m(h))(X^{t_1},\cdots, X^{t_r})$ is a monomial and has no nonzero root, a contradiction. Thus, the lemma follows.
\end{proof}

\begin{lem}\label{small-coord-conseq}
Suppose there is some $\epsilon>0$ such that for every nonzero $g\in G\setminus H$, $|g| > \epsilon$. Then $G = H\oplus \Z^k$ for some $k\in \N$. 
\end{lem}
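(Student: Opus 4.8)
The plan is to deduce $G=H\oplus\Z^k$ (with $k=r-\rk H$) from the single fact that $G/H$ is free abelian of rank $k$. Once $G/H\cong\Z^k$ is known, the exact sequence $0\to H\to G\to G/H\to 0$ splits because $\Z^k$ is projective, and lifting a basis of $G/H$ to elements $e_1,\dots,e_k\in G$ gives an internal decomposition $G=H\oplus\langle e_1,\dots,e_k\rangle$ with $\langle e_1,\dots,e_k\rangle\cong\Z^k$. So the whole content is the freeness of $G/H$, and I would get that by realizing $G/H$ as a \emph{discrete} subgroup of a finite-dimensional real vector space: a rank-$k$ discrete subgroup of $\R^k$ is a lattice, hence $\cong\Z^k$.

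Concretely, let $V\subseteq\R^r$ be the real span of $\bar H$, a $(\rk H)$-dimensional subspace, noting that $V\cap G=H$ (since $\bar H$ is the set of $\Q$-points of $V$, and $H$ is pure in $G$, so $\bar H\cap G=H$). Fix a complementary subspace $W$, so $\R^r=V\oplus W$, and let $\pi\colon\R^r\to W$ be the projection with kernel $V$. Then $\pi(\bar H)=0$, so $\pi|_G$ has kernel $V\cap G=H$, and $\pi(G)\cong G/H$ is a subgroup of $W\cong\R^k$ of rank $k$. It remains to prove $\pi(G)$ is discrete. Suppose not; then there are nonzero $u_n\in\pi(G)$ with $u_n\to 0$, and passing to a subsequence we may take the $u_n$ pairwise distinct (a sequence in $\pi(G)\setminus\{0\}$ converging to $0$ must have infinitely many distinct values). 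Choose $g_n\in G$ with $\pi(g_n)=u_n$. The key step is that the $g_n$ can be taken of uniformly bounded norm: $H$ contains a lattice $\Lambda$ of $V$ (spanned by any $\rk H$ linearly independent elements of $H$), so writing $g_n=v_n+u_n$ with $v_n\in V$ and choosing $\lambda_n\in\Lambda\subseteq H$ with $v_n-\lambda_n$ in a fixed bounded fundamental parallelepiped $P$ of $\Lambda$, the element $g_n-\lambda_n\in G$ lies in the same $\pi$-fiber as $g_n$ and satisfies $|g_n-\lambda_n|\le\operatorname{diam}P+|u_n|$, which is bounded for $n$ large. Replace each $g_n$ by $g_n-\lambda_n$.

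Now the infinitely many $g_n$ lie in a bounded, hence totally bounded, region of $\R^r$, so for every $\delta>0$ some two of them, say $g_n$ and $g_m$ with $u_n\neq u_m$, satisfy $|g_n-g_m|<\delta$. Then $g_n-g_m\in G$, it has norm $<\delta$, and $\pi(g_n-g_m)=u_n-u_m\neq 0$, so $g_n-g_m\notin V$ and therefore $g_n-g_m\in G\setminus H$. Letting $\delta\to 0$ contradicts the hypothesis that $G\setminus H$ is bounded away from the origin. Hence $\pi(G)$ is discrete, so $G/H\cong\pi(G)\cong\Z^k$, and we conclude as in the first paragraph. The only step with any real content is the uniform bound on the fiber representatives, which rests on the elementary fact that a finite-rank subgroup spanning its ambient real space is relatively dense in it (here, because it contains a full lattice); the splitting of the sequence and the structure theory of discrete subgroups of $\R^k$ are standard.
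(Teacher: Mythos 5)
Your proof is correct and takes essentially the same route as the paper: both pass to the quotient by the span of $H$, use the $\epsilon$-hypothesis to conclude that the image of $G$ is discrete and hence isomorphic to $\Z^k$, and then lift generators to split $G$ as $H\oplus\Z^k$. The only real difference is that you spell out the discreteness step carefully (bounded fiber representatives via a full lattice of $H$ in its real span, then differences of nearby representatives), a step the paper asserts in a single sentence.
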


\begin{proof}
We first work in $\Q^r/\bar H$. Since there is a ball that intersects $G\setminus H$ trivially, the image $\pi(G)$ is discrete. Thus, $\pi(G)$ is isomorphic to $\Z^k$ for some $k$. Let $g_1, \cdots, g_k \in G$ such that $\pi(g_1), \cdots, \pi(g_k)$ is a free generating set of $\pi(G)$. Note that $g_i$ are linearly independent. 

We now show that $G = H \oplus \langle g_1\rangle \oplus\cdots\oplus\langle g_k\rangle$. Let $g \in G$.  Then $\pi(g) = \sum q_i\pi(g_i)$ for some $q_i\in \N$. Then $g-\sum q_ig_i$ is in $\bar H = \ker(\pi)$ and is also in $G$, so $g-\sum q_ig_i\in H$. Thus $g \in H+ \langle g_1\rangle +\cdots+\langle g_k\rangle $. Now, suppose for some $h,h' \in H$ and $q_i,q_i' \in \Z$, we have $h+\sum q_ig_i = h'+\sum q_i' g_i$.  Considering $\pi(G)$ and recalling that the elements 
$\pi(g_i)$ form a free generating set,
we must have $q_i = q_i'$. Thus, by canceling, we also have $h = h'$, so the sum $H+ \langle g_1\rangle +\cdots+\langle g_k\rangle$  is direct. Thus, $G = H \oplus \langle g_1\rangle \oplus\cdots\oplus\langle g_k\rangle \cong H \oplus \Z^k$.
\end{proof}

Combining Lemmas \ref{small-coord} and \ref{small-coord-conseq}, we see that $G= H\oplus \Z^k$. On the other hand, $\Gtilde$ satisfies exactly the same conditions with $f^{-1}:\Ftilde \to F$ a field isomorphism, so $\Gtilde = \Htilde\oplus \Z^{\tilde{k}}$. Since $H$ and $\Htilde$ are pure groups, we have $\rk(G) = \rk(H)+k$ and $\rk(\Gtilde) = \rk(\Htilde)+\tilde k$. We also have $H \cong \Htilde$ by Proposition \ref{subgroup-isom}, so $k = \tilde k$. Finally, we have that if $F\cong \Ftilde$, then $G= H\oplus \Z^k \cong \Htilde\oplus \Z^{\tilde k}= \Gtilde$. This completes the proof of Theorem \ref{thm:TFAbtoTD}.\qed
\end{pf}

In the proof, we need to fix an embedding of $G$ into $\Q^r$ (the paragraph before Definition \ref{subgroups}). However, this requires having a basis for $G$, which cannot in general be found computably. Furthermore, we also need to find a generating set of $\pi(G) \cong \Z^k$ in Lemma \ref{small-coord-conseq}, which also may not be computable.  The following question remains open.

\begin{question}
If $G, H\in \tfa_r$ and $g: \Phi(G) \to \Phi(H)$ is an isomorphism, then $G\cong H$ by Theorem \ref{thm:TFAbtoTD},
and by relative computable categoricity there must exist a $(G\oplus H)$-computable isomorphism $f:G \to H$.
Can we compute such an isomorphism uniformly from $G$, $H$, and $g$?
\end{question}

\section{$\TD_r$ into $\TD_{r+1}$}
\label{sec:fieldsup}

In this section, we show that for every $r \ge 0$, we have $\FD_r \le_{tc}\FD_{r+1}$ via a computable functor. For $\tfa$, if $A,B \in \tfa_r$, then $A \cong B$ if and only if $A\oplus \Z \cong B \oplus \Z$. Thus, $\Phi(A) = A\oplus \Z$ gives a Turing computable embedding. However, for $\FD$, there are two fields $E \not\cong F\in \FD_r$ such that $E(t) \cong F(t)$ (see \cite{Be85}). We will use the Henselization of a field to define a Turing computable embedding. 

We first consider the case when $r= 0$. In this case, the purely transcendental extension suffices. 

\begin{prop}
\label{prop:FD0case}

$\FD_0\leq_{tc} \FD_{1}$.  Furthermore, the Turing computable embedding is functorial (i.e., it can be extended to a computable functor).

\end{prop}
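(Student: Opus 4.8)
The plan is to produce a functorial $tc$-embedding of $\FD_0$ into $\FD_1$ using the purely transcendental extension by a single indeterminate. Given (the atomic diagram of) an algebraic field extension $E$ of $\Q$, the operator $\Phi$ should output (a copy with universe $\omega$ of) the rational function field $E(t)$. The key observation is that $E(t)$ is uniformly effectively interpretable in $E$: its elements are represented by pairs of polynomials over $E$ (with nonzero denominator), with the usual equivalence $\frac{p}{q}\sim\frac{p'}{q'}$ iff $pq' = p'q$, and addition and multiplication given by the standard formulas. Everything here is computable from the atomic diagram of $E$, so $\Phi$ is a Turing operator, and by Theorem \ref{thm:equivalence} it extends to a computable functor $(\Phi,\Phi_*)$: an isomorphism $g\colon E_0\to E_1$ induces the isomorphism $E_0(t)\to E_1(t)$ fixing $t$ and acting as $g$ on coefficients, computably from $E_0\oplus g\oplus E_1$. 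Since $t$ is transcendental over $E$ and $E$ is algebraic over $\Q$, $E(t)$ has transcendence degree $1$ over $\Q$, so $\Phi(E)\in\FD_1$.

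It remains to verify that $\Phi$ preserves non-isomorphism: if $E_0(t)\cong E_1(t)$, then $E_0\cong E_1$. Here one uses that the field of constants can be recovered from $E(t)$ in an isomorphism-invariant way. Concretely, the relative algebraic closure of $\Q$ inside $E(t)$ is exactly $E$: since $E/\Q$ is algebraic, every element of $E$ is algebraic over $\Q$, and conversely any element of $E(t)$ algebraic over $\Q$ must lie in $E$, because $E(t)/E$ is purely transcendental and hence the algebraic closure of $E$ in $E(t)$ is $E$ itself (an element of $E(t)\setminus E$ is transcendental over $E$, hence transcendental over $\Q$). The relative algebraic closure of the prime field is preserved by any field isomorphism, so $E_0(t)\cong E_1(t)$ forces $E_0\cong E_1$. (Alternatively, one can identify $E$ inside $E(t)$ as the union of all subfields that are algebraic over $\Q$, or as the set of elements lying in only finitely many $\Q$-conjugacy relations — the point is only that some isomorphism-invariant first-order-in-$L_{\omega_1\omega}$ description picks out $E$.)

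The main obstacle — and the reason this proposition is stated separately from the general $r\ge 1$ case promised in the rest of the section — is precisely that for $r\ge 1$ the trick of adjoining one more transcendental fails, because of the phenomenon cited from \cite{Be85}: there are non-isomorphic $E\not\cong F$ in $\FD_r$ with $E(t)\cong F(t)$, so $\Phi(E)=E(t)$ would not preserve non-isomorphism. For $r=0$ this cannot happen, exactly because $E$ and $F$ are then algebraic over $\Q$ and are each recovered as the relative algebraic closure of $\Q$ in the respective function field; there is no room for a nontrivial "cancellation" of a purely transcendental factor. So the work in this proposition is genuinely just (i) checking that $E\mapsto E(t)$ is a computable functor, which is routine, and (ii) the clean algebraic fact that the relative algebraic closure of $\Q$ in $E(t)$ equals $E$ when $E/\Q$ is algebraic, which gives both injectivity on isomorphism types and an effective interpretation of $E$ back in $E(t)$.
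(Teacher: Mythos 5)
Your proposal is correct and follows essentially the same route as the paper: send $E$ to the purely transcendental extension $E(t)$ (computably, via quotients of polynomials), extend isomorphisms by fixing $t$ to get the functor, and recover $E$ inside $E(t)$ as the relative algebraic closure of $\Q$ (the paper phrases this via the fact, attributed there to L\"uroth's theorem, that $E$ is relatively algebraically closed in $E(t)$) to conclude that $E_0(t)\cong E_1(t)$ forces $E_0\cong E_1$. No gaps worth noting.
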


\begin{proof}

Consider a Turing operator $\Phi$ that takes $A\in \FD_0$ to a purely transcendental extension $A(t)$.  For this, we need to know when one rational function $f(t)$ (with coefficients in $A$) is equal to another $g(t)$.  It is enough to know when one polynomial $p(t)$ is equal to another $q(t)$.  This happens just when the difference is equal to $0$. In the purely transcendental extension, this happens exactly when each coefficient is 0. 

\begin{lem} 

For $A,A'\in \FD_0 $, if $f$ is an isomorphism from $A(t)$ onto $A'(t')$, then $A \cong A'$ via $f|_A$.    

\end{lem}

\begin{proof}

Let $f(t) = x$, and let $f(A) = B$.  Since $A(t)$ is a purely transcendental extension of $A$, $B(x)$ is a purely transcendental extension of $B$,
and the image $B(x)$ of $f$ must equal $A'(t')$.  Then $A'$ and $B$ are algebraic over $\mathbb{Q}$.  Now, Luroth's Theorem says that $A'$ is relatively algebraically closed in $A'(t)$.  That is, if $c\in A'(t)$ is algebraic over $\mathbb{Q}$, then it is already present in $A'$.  Thus, $B\subseteq A'$.  Similarly, $B$ is relatively algebraically closed in $B(x)$; if $c\in B(x)$ is algebraic over $\mathbb{Q}$, then it is already present in $B$.  Therefore, $A'\subseteq B$.  So, $f$ maps $A$ isomorphically onto $A'$. 
\end{proof}

Conversely, it is clear that if $A\cong A'$, then $A(t)\cong A'(t')$.  Moreover, whenever $f:A\to A'$ is an isomorphism,
we can extend $f$ to an isomorphism $\overline{f}:A(t)\to A'(t')$ by defining $f(t)=t'$.  Since $A(t)$ is constructed
so that its subset $A$ of constant rational functions is uniformly decidable within $A(t)$, this $\overline{f}$
is computable uniformly from $f$, $A$, and $A'$.  The choice of $\overline{f}$ respects composition
and preserves the identity, so we have a computable functor from $\FD_0$ to $\FD_1$.
\end{proof}

Similar to the results of Hjorth and Thomas on $\tfa_r$, this embedding is strict.

\begin{prop}

$\FD_1\not\leq_{tc} \FD_0$.

\end{prop}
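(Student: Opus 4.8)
The plan is to argue by contradiction. Suppose $\Phi$ were a $tc$-reduction from $\FD_1$ to $\FD_0$. I would show this forces every field in $\FD_1$ to have a $\Pi_3$ Scott sentence, contradicting the Proposition above that exhibits a field in $\FD_1$ with no $\Pi_3$ Scott sentence. The engine of the argument is that every field in $\FD_0$ \emph{does} have a $\Pi_2$ Scott sentence, which can then be pulled back through the Turing operator $\Phi$; the contradiction is tight precisely because isomorphism on $\FD_0$ is $\Pi_2$ (not $\Pi_3$) while we have a ``no $\Pi_3$'' example in $\FD_1$.

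First I would record that each $E\in\FD_0$ has a $\Pi_2$ Scott sentence. By the Proposition recorded in the introduction, two fields in $\FD_0$ are isomorphic exactly when they satisfy the same existential sentences, and this relation is (effective) $\Pi_2$; hence the isomorphism class of $E$ is the section at $E$ of a $\mathbf{\Pi_2}$ relation, so it is a $\mathbf{\Pi_2}$ (and isomorphism-invariant) subset of $Mod(L)$, and Vaught's theorem yields a $\Pi_2$ Scott sentence for $E$. (Concretely, one may take the conjunction of a $\Pi_2$ axiomatization of $\FD_0$ — a field of characteristic $0$ algebraic over its prime field — with the countable conjunction of all existential sentences true in $E$, which is $\Pi_2$, and the countable conjunction of the negations of the existential sentences false in $E$, which is $\Pi_1$.)

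Next I would perform the pullback. A $tc$-embedding $\Phi$ is in particular a \emph{continuous} map on its domain: for $\mathcal{A}\in\FD_1$, each bit of $\Phi(\mathcal{A})$ is fixed by a finite initial segment of $\mathcal{A}$, so for every atomic sentence $\theta$ of the field language, ``$\Phi(\mathcal{A})\models\theta$'' and ``$\Phi(\mathcal{A})\models\neg\theta$'' are $\Sigma_1$ over $\mathcal{A}$, uniformly. Fix $F\in\FD_1$ and let $\psi$ be a $\Pi_2$ Scott sentence for $\Phi(F)\in\FD_0$. Since $\Phi$ is continuous, the preimage under $\Phi$ of the $\mathbf{\Pi_2}$ set $\{E'\in Mod(L) : E'\models\psi\}$ is again $\mathbf{\Pi_2}$ on $\FD_1$; intersecting with $\FD_1$ itself, which is axiomatized by a $\Sigma_2$ sentence $\chi_{\FD_1}$ (a field of characteristic $0$ in which any two elements are algebraically dependent and some element is transcendental), we get an isomorphism-invariant $\mathbf{\Pi_3}$ class. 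For $\mathcal{A}\in\FD_1$ this class consists of exactly those $\mathcal{A}$ with $\Phi(\mathcal{A})\models\psi$, i.e. with $\Phi(\mathcal{A})\cong\Phi(F)$, i.e. (as $\Phi$ is a $tc$-embedding on $\FD_1$) with $\mathcal{A}\cong F$; so by Vaught's theorem $F$ has a $\Pi_3$ Scott sentence. Equivalently and more syntactically, substituting the $\Sigma_1$-in-$\mathcal{A}$ presentations of the literals of $\psi$ (written in negation normal form) into $\psi$ yields a $\Pi_2$ sentence $\psi^\Phi$ — a finite Boolean combination of $\Sigma_1$ formulas is $\Sigma_1$, so the substitution keeps each $\Sigma_1$-subformula of $\psi$ at level $\Sigma_1$ — and $\chi_{\FD_1}\wedge\psi^\Phi$ is a $\Pi_3$ Scott sentence for $F$.

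Since $F\in\FD_1$ was arbitrary, every field in $\FD_1$ would then have a $\Pi_3$ Scott sentence, contradicting the earlier Proposition. The step I expect to require the most care is this complexity bookkeeping: one must use that $\Phi$ is \emph{Turing-computable}, hence continuous, since a merely Borel $\Phi$ need not pull $\mathbf{\Pi_2}$ sets back to $\mathbf{\Pi_2}$ sets — consistent with the fact, noted in the paper, that a Borel reduction $\FD_1\leq_B\FD_0$ is not ruled out by present methods — and one must track that $\FD_1$, unlike $\FD_0$, is only $\Sigma_2$-axiomatizable, so the conjunction lands in $\Pi_3$ rather than $\Pi_2$. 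This is exactly why the earlier ``no $\Pi_3$ Scott sentence'' construction is the matching tool, and why the argument would collapse if isomorphism on $\FD_0$ were only $\Pi_3$.
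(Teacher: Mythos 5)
Your argument is correct, but it takes a genuinely different route from the paper's. The paper argues locally: it exhibits a single pair of non-isomorphic fields in $\FD_1$ with the same existential theory (namely $\overline{\Q}(t)$ and $\overline{\Q(t)}$, using preservation of existential sentences along the chain $\overline{\Q}\subseteq\overline{\Q}(t)\subseteq\overline{\Q(t)}$ and the fact that the two algebraically closed fields are elementarily equivalent), notes that non-isomorphic members of $\FD_0$ are always separated by an existential sentence, and then applies the Pullback Theorem of Knight--Miller--Vanden Boom to pull a separating existential sentence back through the purported operator to a computable $\Sigma_1$ sentence, one of whose existential disjuncts would separate the two input fields --- a contradiction. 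You instead make a global Scott-complexity transfer: every member of $\FD_0$ has a $\Pi_2$ Scott sentence (by the same ``isomorphic iff same existential theory'' fact from the introduction), and since a $tc$-reduction is continuous, the class of copies of any $F\in\FD_1$ becomes an isomorphism-invariant $\mathbf{\Pi_3}$ class, so Vaught's theorem hands every $F\in\FD_1$ a $\Pi_3$ Scott sentence, contradicting the Section 3 proposition that some field in $\FD_1$ has none. Your route reuses the paper's Scott-sentence machinery, makes the exact complexity mismatch visible, and applies verbatim to arbitrary continuous (oracle-computable) reductions; the paper's route is more elementary and finitary, needing only a $\Sigma_1$-level pullback and one explicit pair of fields. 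Two small points of bookkeeping in your write-up: $\FD_1$ is axiomatized by a conjunction of a $\Pi_2$ and a $\Sigma_2$ sentence (``any two elements algebraically dependent'' is $\Pi_2$, not $\Sigma_2$), which still lands the total in $\Pi_3$ as you need; and your ``more syntactic'' substitution does not by itself yield an $L_{\omega_1\omega}$ sentence, since the pulled-back conditions and the transplanted quantifiers refer to specific natural-number constants and are not isomorphism-invariant as written --- passing to an honest $\Pi_3$ Scott sentence requires exactly the Vaught/Lopez-Escobar step of your primary route (or the paper's Pullback Theorem), so that aside should be regarded as a restatement of the boldface computation rather than an independent shortcut.
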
 

\begin{proof}

Existential sentences, saying which polynomials over $\mathbb{Q}$ have roots, are enough to distinguish non-isomorphic elements of $\FD_0$.  We will use the Pullback Theorem \cite{KMV}.  First, we show that there are non-isomorphic elements of $\FD_1$ with the same existential theory.  For this, consider a chain of three fields.  The first, $A_0$, is the algebraic closure of $\mathbb{Q}$.  The second, $A_1$, is $A_0(t)$, a purely transcendental extension of $A_0$.  The third, $A_2$, is the algebraic closure of $A_1$.  Now, $A_0$ and $A_2$ satisfy the same theory---that of algebraically closed fields of characteristic $0$.  Existential sentences are preserved under extension, so those true in $A_0$ are true in $A_1$ and those true in $A_1$ are true in $A_2$, matching those true in $A_0$.  Then $A_1$ and $A_2$ are non-isomorphic elements of $\FD_1$ with the same existential theory.  If $\Phi$ were a $tc$-reduction to $\FD_0$,
we would have $\Phi(A_1)\not\cong\Phi(A_2)$, so there would be an existential sentence $\varphi$ true in just one of the two.  The pullback $\varphi^*$ is a computable $\Sigma_1$ sentence true in just one of $A_1$ and $A_2$.  Now, $\varphi^*$ is a disjunction of existential sentences, one of which is true in just one of $A_1,A_2$.  This is a contradiction.  
\end{proof}

We can extend the previous result to Borel embeddings.

\begin{prop}

$\FD_1\not\leq_B \FD_0$.

\end{prop}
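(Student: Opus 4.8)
The plan is to bootstrap from the previous proposition, which already establishes $\FD_1 \not\leq_{tc} \FD_0$, by upgrading the argument from effective $\Sigma_1$ to arbitrary $\mathbf{\Sigma_1}$ (and in fact $\mathbf{\Pi_2}$) formulas. The engine of the earlier proof is the fact that isomorphism on $\FD_0$ is captured by the existential theory, together with the Pullback Theorem. For the Borel version we need two ingredients. First, the structural input is unchanged: the chain $A_0 = \overline{\Q} \subseteq A_1 = A_0(t) \subseteq A_2 = \overline{A_1}$ provides two non-isomorphic fields $A_1, A_2 \in \FD_1$ with the same existential theory, since existential sentences persist upward in extensions and $A_0 \equiv A_2$ as algebraically closed fields of characteristic $0$. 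Second, we need a Borel analogue of the Pullback Theorem: if $\Phi: \FD_1 \to \FD_0$ is a Borel embedding and $\varphi$ is any $\mathbf{\Sigma_1}$ (or even $L_{\omega_1\omega}$) sentence, then $\set{\B \in \FD_1}{\Phi(\B) \models \varphi}$ is a Borel, in fact $\mathbf{\Sigma_1}$-definable, class of structures in $\FD_1$ — more precisely, by Vaught's Theorem this class is axiomatized by some $\mathbf{\Sigma_1}$ sentence of $L_{\omega_1\omega}$, call it $\varphi^*$.

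The argument then runs as follows. First I would note that since $\FD_0$ is closed under isomorphism, $\mathrm{Mod}(\varphi) \cap \FD_0$ is itself a Borel class inside $\FD_0$; since all fields in $\FD_0$ are algebraic over $\Q$, any isomorphism-invariant property is determined by the existential theory, so $\mathrm{Mod}(\varphi) \cap \FD_0$ is in fact $\mathbf{\Sigma_1}$ — it coincides with $\mathrm{Mod}(\psi) \cap \FD_0$ for a countable disjunction $\psi$ of finitary existential sentences. Pulling this back along the Borel function $\Phi$, the class $\set{\B \in \FD_1}{\Phi(\B) \models \psi}$ is axiomatized (within $\FD_1$) by a $\mathbf{\Sigma_1}$ sentence $\psi^*$, which by distributing is a countable disjunction of finitary existential sentences true of exactly one of $A_1, A_2$. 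Now suppose toward a contradiction that $\Phi$ is a Borel embedding with $\Phi(A_1) \not\cong \Phi(A_2)$. Since $\Phi(A_1), \Phi(A_2) \in \FD_0$ and non-isomorphic fields in $\FD_0$ are separated by some finitary existential sentence $\varphi$, we get $\varphi$ true in exactly one of $\Phi(A_1), \Phi(A_2)$. Taking $\psi$ (equivalent to $\varphi$ on $\FD_0$) and pulling back, $\psi^*$ is a $\mathbf{\Sigma_1}$ sentence true in exactly one of $A_1, A_2$; one of its disjuncts is a single finitary existential sentence true in exactly one of $A_1, A_2$ — contradicting that $A_1$ and $A_2$ have the same existential theory.

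The main obstacle I expect is making the "Borel Pullback" step fully rigorous: specifically, justifying that pulling back a $\mathbf{\Sigma_1}$-axiomatizable class along a Borel embedding yields a $\mathbf{\Sigma_1}$-axiomatizable class within the source class. The clean route is to invoke Vaught's Theorem (already cited in the excerpt): $\Phi^{-1}(\mathrm{Mod}(\psi))$ is a $\mathbf{\Sigma_1}$ subset of $2^\omega$ as a preimage of a $\mathbf{\Sigma_1}$ set under a Borel map — wait, a preimage under a Borel map need not preserve the level of the hierarchy, so the more careful argument is that $\Phi$ being an \emph{embedding} means $\Phi^{-1}(\mathrm{Mod}(\psi)) \cap \FD_1$ is isomorphism-invariant and Borel, hence $\mathbf{\Sigma_\alpha}$ for \emph{some} countable $\alpha$; but this is not enough, since we need a $\mathbf{\Sigma_1}$ bound to extract a finitary existential disjunct. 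The resolution, and the key technical point, is that the \emph{existential theory} of a field in $\FD_0$ is determined by which finitary existential sentences hold, and conversely any two fields in $\FD_0$ with the same existential theory are isomorphic, so on $\FD_0$ the class $\mathrm{Mod}(\varphi)$ is literally a Boolean combination — indeed a countable union — of clopen sets cut out by finitary existential sentences; pulling \emph{that} description back and using that $A_1, A_2$ agree on all finitary existential sentences finishes the contradiction without ever needing to control $\alpha$. I would write the proof to lean on this concrete description rather than on an abstract pullback lemma.

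\begin{proof}

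Suppose toward a contradiction that $\Phi:\FD_1\to\FD_0$ is a Borel embedding.  As in the previous proof, consider the chain of fields $A_0=\overline{\Q}\subseteq A_1=A_0(t)\subseteq A_2=\overline{A_1}$.  Then $A_1,A_2\in\FD_1$, they are non-isomorphic (one is algebraically closed and the other is not), and they have the same existential theory: existential sentences persist upward in field extensions, and $A_0\equiv A_2$ as algebraically closed fields of characteristic $0$, so the existential sentences true in $A_0$, in $A_1$, and in $A_2$ all coincide.

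Since $\Phi$ is an embedding, $\Phi(A_1)\not\cong\Phi(A_2)$.  Both are fields in $\FD_0$, hence algebraic over $\Q$.  Two such fields are isomorphic if and only if they satisfy the same existential sentences; therefore there is a finitary existential sentence $\varphi$ true in exactly one of $\Phi(A_1),\Phi(A_2)$.  On $\FD_0$, the class $Mod(\varphi)\cap\FD_0$ is isomorphism-invariant, and by the remark above it agrees with $Mod(\psi)\cap\FD_0$ where $\psi$ is the (possibly infinite) disjunction of all finitary existential sentences that imply $\varphi$ over the theory of algebraically closed fields of characteristic $0$; in particular $\psi$ is a $\mathbf{\Sigma_1}$ sentence of $L_{\omega_1\omega}$ and, on $\FD_0$, $Mod(\psi)=Mod(\varphi)$.

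Now the class $K=\set{\B\in\FD_1}{\Phi(\B)\models\psi}$ is Borel and closed under isomorphism, so by Vaught's Theorem it is axiomatized within $\FD_1$ by some sentence of $L_{\omega_1\omega}$; but more to the point, $\B\in K$ if and only if $\Phi(\B)$ satisfies one of the finitary existential disjuncts of $\psi$, and whether $\Phi(\B)$ satisfies a given finitary existential sentence is decided by $\Phi(\B)$.  We have $A_1\in K$ and $A_2\notin K$ (or vice versa), since $\varphi$ — equivalently $\psi$ — holds in exactly one of $\Phi(A_1),\Phi(A_2)$.  Pulling back: the Borel class $K$ separates $A_1$ from $A_2$, and since $K$ is cut out within $\FD_1$ by a $\mathbf{\Sigma_1}$ sentence $\psi^*$ (apply the effective-free Pullback argument of \cite{KMV} in its Borel form, or simply use Vaught's Theorem together with the fact that on $\FD_0$ the target class was $\mathbf{\Sigma_1}$), one of the finitary existential disjuncts of $\psi^*$ is a single finitary existential sentence true in exactly one of $A_1,A_2$.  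This contradicts the fact that $A_1$ and $A_2$ have the same existential theory.  Hence no such $\Phi$ exists.
\end{proof}
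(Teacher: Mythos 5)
There is a genuine gap, and it sits exactly where you flagged it: the ``Borel Pullback'' step. The Pullback Theorem of \cite{KMV} works because a Turing computable (hence continuous) operator $\Phi$ decides each finitary sentence about $\Phi(\B)$ from a finite piece of $\B$, which is what makes the pullback of an existential sentence again (a disjunction of) existential sentences. For a merely Borel $\Phi$ this fails: the set $K=\set{\B\in\FD_1}{\Phi(\B)\models\varphi}$ is Borel and isomorphism-invariant, so Lopez-Escobar/Vaught give an $L_{\omega_1\omega}$ axiomatization of it --- but only at whatever level $K$ actually occupies as a Borel set, which for a non-continuous $\Phi$ can be arbitrarily high; nothing forces it down to $\mathbf{\Sigma_1}$. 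Your attempted rescue (``whether $\Phi(\B)$ satisfies a given finitary existential sentence is decided by $\Phi(\B)$'') does not help, since $\Phi(\B)$ depends on $\B$ only in a Borel way. Without a $\mathbf{\Sigma_1}$ bound on $\psi^*$ there is no contradiction: $A_1\not\cong A_2$, so of course some $L_{\omega_1\omega}$ sentence (e.g.\ a Scott sentence) separates them.

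In fact the two-point strategy cannot be repaired: the isomorphism classes of $A_1$ and $A_2$ are Borel and invariant, so there trivially exists a Borel map sending everything isomorphic to $A_1$ to $\Q$ and everything else to $\Q(i)$, which separates this particular pair while respecting isomorphism. A pair of non-isomorphic fields with the same existential theory obstructs $tc$- (continuous) reductions, but any obstruction to Borel reducibility must be global. The paper's proof is of this global kind: isomorphism on $\FD_0$ is \emph{smooth} (the map $\mathcal{A}\mapsto\set{n}{\mathcal{A}\models(\exists x)\,p_n(x)=0}$ Borel-reduces it to equality on $2^\omega$), while isomorphism on $\FD_1$ is not, because the Vitali relation $E_0$ reduces to $\TFAb_1$ (via subgroups of $\Q$ generated by the $\tfrac{1}{p_n}$ with $f(n)=1$) and $\TFAb_1\leq_{tc}\FD_1$; composing these with a hypothetical reduction $\FD_1\leq_B\FD_0$ would give $E_0\leq_B\ {=}$, contradicting the classical fact that $E_0$ is not smooth. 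You would need an argument of this shape rather than a pullback of a single sentence.
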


\begin{proof}  

Suppose $\Phi$ is a Borel embedding of $\FD_1$ in $\FD_0$.  

\bigskip 
\noindent
\textbf{Claim 1:} There is a Borel reduction of isomorphism on $\FD_0$ to $=$ (equality on sets).  (In fact, the output set is $\Delta^0_2$ uniformly relative to the input field.)

\begin{proof} [Proof of Claim 1]

Let $(p_n)_{n\in\omega}$ be a computable list of the polynomials $p(x)$ with coefficients in $\mathbb{Z}$.  Let $\Psi$ take the field $\mathcal{A}\in \FD_0$ to the set \mbox{$S = \{n:\mathcal{A}\models(\exists x)p_n(x)= 0\}$.}  We have $\mathcal{A}\cong\mathcal{A}'$ if and only if $\Psi(\mathcal{A}) = \Psi(\mathcal{A}')$. 
\end{proof}

Recall that $E_0$ (Vitali equivalence) is the equivalence relation on $2^\omega$ such that $fE_0 g$ iff $f$ and $g$ differ finitely; i.e., for all sufficiently large $n$, $f(n) = g(n)$.

\bigskip
\noindent
\textbf{Claim 2}:  There is a Borel reduction, even a $tc$-reduction, of $E_0$ 
to isomorphism on $\TFAb_1$.  
\begin{proof} [Proof of Claim 2]

Let $(p_n)_{n\in\omega}$ be a computable enumeration of the primes.  Let $\Psi'$ take $f$ 
to a computable copy of a subgroup of $\mathbb{Q}$ generated by the elements $\frac{1}{p_n}$ such that $f(n) = 1$.  
\end{proof}

We have shown that $\TFAb_1\leq_{tc} \FD_1$.  Composing the known reductions from $=^*$ to $\TFAb_1$, from $\TFAb_1$ to $\FD_1$, the purported reduction from $\FD_1$ to $\FD_0$, and the known reduction from $\FD_0$ to $=$, we would get a Borel reduction from $=^*$ to $=$. However, it is known that there is no such reduction.  
\end{proof}  

For general $r$, the map $F \mapsto F(t)$ no longer preserves (non-)isomorphism. Thus, we use the Henselization of a field to give a Turing computable embedding from $\FD_r$ to $\FD_{r+1}$. We first introduce some basic notions in valued fields and Henselization. We will use this as a black box and refer the reader to \cite{En05} for more detail. 
Given a field $K$ and a totally ordered abelian group $\Gamma$, we extend the group operation and ordering of $\Gamma$ naturally to $\Gamma \cup \{\infty\}$.  A \emph{valuation} of $K$ (with value group $\Gamma$), is a surjective map $v: K \to \Gamma \cup \{\infty\}$ such that for $a,b\in K$, (1) $v(a) = \infty$ if and only if $a = 0$, (2) $v(ab) = v(a)+v(b)$, and (3) $v(a+b) \ge \min (v(a),v(b))$.  Then $(K,v)$ is called a \emph{valued field}. We define the \emph{valuation ring} $O \subset K$ by $O = \{a\in K \mid v(a) \ge 0\}$. We say that $(K,v)$ is \emph{henselian} if $v$ has a unique extension to every algebraic extension $K'$ of $K$.

For convenience, we shall take the following characterization of the henselization of a valued field \cite[Theorem 5.2.2]{En05} as our definition.

\begin{defn}

Let $(K,v)$ be a valued field. Then the \emph{henselization} $(K^h,v^h)$ of $(K,v)$ is defined to be the valued field extension of $(K,v)$ such that
\begin{enumerate}
\item $(K^h,v^h)$ is henselian, and
\item for every henselian valued extension $(K',v')$ of $(K,v)$, there is a unique $K$-embedding $i: (K^h,v^h) \to (K',v')$. 
\end{enumerate}

\end{defn}

Every valued field has a henselization that is algebraic and is unique up to isomorphism. Furthermore, if $\Gamma$ and $\Gamma^h$ are the value groups of $(K,v)$ and the henselization $(K^h, v^h)$, we always have $\Gamma = \Gamma^h$ \cite[Theorem 5.2.5]{En05}.

\begin{defn}

A valued field is said to be \emph{discrete} if its value group is $\Z$

\end{defn}

We will need the following fact:

\begin{lem}[Folklore]\label{uniq-val}

A field can have at most one discrete Henselian valuation. 

\end{lem}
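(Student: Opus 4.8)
The plan is to take two discrete Henselian valuations $v_1,v_2$ on a field $K$ --- both automatically non-trivial, since their common value group $\Z$ is nonzero --- and to show that their valuation rings coincide. The first step is to reduce to the case where $v_1$ and $v_2$ are \emph{independent}. The key observation is that a valuation whose value group is $\Z$ has no coarsenings besides itself and the trivial valuation, since the coarsenings of $v_i$ correspond to the convex subgroups of $\Z$, and the only convex subgroups of $\Z$ are $\{0\}$ and $\Z$. Consequently, if $v_1$ and $v_2$ were dependent, they would admit a common \emph{non-trivial} coarsening $w$ (a standard fact about valued fields; see \cite{En05}); being a non-trivial coarsening of $v_1$ it must equal $v_1$, and being one of $v_2$ it must equal $v_2$, whence $v_1=v_2$ and we are done. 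So we may assume $v_1$ and $v_2$ are independent. Note that the Henselian hypothesis has not yet been used; it enters only in the next step.

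For the independent case I would invoke F.\ K.\ Schmidt's theorem (as found in \cite{En05}): a field carrying two independent Henselian valuations must be separably closed. It therefore suffices to show that a separably closed field carries no discrete valuation at all --- equivalently, that every valuation $v$ on a separably closed field has divisible value group $\Gamma$, which is impossible for $\Gamma=\Z$. For an integer $n$ not divisible by $\operatorname{char}(K)$ and $a\in K^\times$, the polynomial $x^n-a$ is separable, so it has a root $b\in K$, giving $v(a)=n\,v(b)\in n\Gamma$; thus $\Gamma$ is $n$-divisible. If $\operatorname{char}(K)=p>0$, then for $c\in K$ with $\gamma:=v(c)>0$ the Artin--Schreier polynomial $x^p-x-c^{-1}$ is separable, hence has a root $x\in K$; since $v(c^{-1})=-\gamma<0$ one is forced to have $v(x)<0$, so $v(x^p-x)=p\,v(x)$, and therefore $p\,v(x)=-\gamma$, i.e.\ $\gamma/p\in\Gamma$. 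The cases $\gamma\le 0$ reduce trivially to this, so $\Gamma$ is $p$-divisible as well, hence divisible --- contradicting $\Gamma=\Z$. This contradiction rules out the independent case, so $v_1$ and $v_2$ must be dependent, and by the first step $v_1=v_2$.

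Honestly, this lemma is folklore and there is no serious obstacle: the reduction to the independent case is a short convexity argument, and the divisibility computation is routine. The only genuine external input is F.\ K.\ Schmidt's theorem, which I would cite from \cite{En05} rather than reprove, and the only place requiring a moment's thought in the elementary part is the characteristic-$p$ divisibility, where $x^n-a$ must be replaced by an Artin--Schreier polynomial. Since the fields in this paper all have characteristic $0$, one could even shorten the argument: there ``separably closed'' means ``algebraically closed'', and the divisibility of $\Gamma$ follows immediately from the $x^n-a$ argument alone.
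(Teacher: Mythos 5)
The paper states this lemma as folklore and does not supply its own proof, so there is no internal argument to compare against. Your proof is correct and is the standard route one would take. The reduction to the independent case is sound: a coarsening of $v_i$ corresponds to a convex subgroup of its value group $\Z$, and since $\Z$ has only $\{0\}$ and $\Z$ as convex subgroups, the only non-trivial coarsening of $v_i$ is $v_i$ itself, so a common non-trivial coarsening of dependent $v_1,v_2$ forces $v_1=v_2$. The independent case is correctly disposed of by F.\ K.\ Schmidt's theorem (two independent non-trivial Henselian valuations force separable closedness, \cite[Thm.\ 4.4.1]{En05}) together with the computation that a separably closed field has divisible value group, which rules out $\Z$. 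The Artin--Schreier step and the reduction of $\gamma\le 0$ to $\gamma>0$ are both handled correctly; and as you observe, in the context of this paper all fields have characteristic $0$, so that branch and indeed the distinction between separably and algebraically closed can be dropped, shortening the argument further.
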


We can now describe the embedding of $\TD_r$ to $\TD_{r+1}$.  

\begin{thm}
\label{thm:fieldsupwards}
There is a functorial Turing computable embedding from $\TD_r$ to $\TD_{r+1}$.
\end{thm}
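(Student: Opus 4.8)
The plan is to send a field $F \in \TD_r$ to the henselization of the rational function field $F(t)$ equipped with the $t$-adic valuation. Concretely, on $F(t)$ we take the valuation $v$ with value group $\Z$ that sends a rational function to the order of vanishing of $t$; then we set $\Phi(F) = (F(t))^h$, the henselization. This is a field of characteristic $0$ of transcendence degree $r+1$ (the henselization is algebraic over $F(t)$, so adds no transcendence degree, and $t$ is transcendental over $F$). Computing the atomic diagram of $\Phi(F)$ from that of $F$ is routine but needs care: one represents elements of $(F(t))^h$ by algebraic data over $F(t)$ (e.g.\ as roots of polynomials together with enough approximation data to pin them down via Hensel's lemma), and equality among such representations is decidable using $F$ as an oracle. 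I would state this as a lemma and refer to \cite{En05} for the construction, just as the excerpt already does for the black-box facts. The functor part $\Phi_*$ is then immediate: an isomorphism $f: F \to F'$ extends uniquely to $F(t) \to F'(t')$ sending $t \mapsto t'$, and this extension is an isomorphism of valued fields, hence by the universal property of henselization extends uniquely to $(F(t))^h \to (F'(t'))^h$; uniqueness of the extension at each stage makes the assignment respect composition and identities, and the whole thing is computable uniformly from $f$, $F$, $F'$ since the representations of elements are effective.

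The substance is showing $\Phi$ preserves non-isomorphism: if $(F(t))^h \cong (F'(t'))^h$ as fields, then $F \cong F'$. The key point is that the $t$-adic valuation on $F(t)$ is discrete and its henselization is discrete henselian, so by Lemma \ref{uniq-val} the field $(F(t))^h$ has a \emph{unique} discrete henselian valuation; hence any field isomorphism $g : (F(t))^h \to (F'(t'))^h$ automatically carries the valuation $v^h$ to the valuation $v'^h$, i.e.\ is an isomorphism of valued fields. Now I want to recover $F$ from the valued field $((F(t))^h, v^h)$ in an isomorphism-invariant way. The natural candidate is the residue field: the residue field of $F(t)$ under the $t$-adic valuation is $F$, and henselization does not change the residue field (nor the value group) \cite[Theorem 5.2.5]{En05}, so the residue field of $((F(t))^h, v^h)$ is again $F$. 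Therefore a valued-field isomorphism $g$ induces a field isomorphism on residue fields $F \to F'$, which is what we want.

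The main obstacle, and the step that needs the most care, is Lemma \ref{uniq-val} and its use: one must be sure that \emph{every} field isomorphism between the two henselizations is forced to be a valued-field isomorphism. This relies on $(F(t))^h$ having a unique discrete henselian valuation — so I would either prove Lemma \ref{uniq-val} (the standard argument: a discrete henselian valuation on a field $K$ makes $K$ henselian with value group $\Z$, and the valuation ring is recovered as the set of elements $x$ such that $x$ or $1/x$ satisfies a suitable Hensel-solvable congruence, which is preserved by automorphisms) or cite it. A secondary subtlety: I should confirm that the residue field of the henselization is genuinely $F$ and not merely an algebraic extension; this is exactly the content of the cited theorem that $\Gamma = \Gamma^h$ together with the fact that henselization is an immediate-type extension in the sense that it preserves residue field as well — I would invoke \cite[Theorem 5.2.5]{En05} for both. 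I should also double-check the $r=0$ consistency: there the earlier Proposition \ref{prop:FD0case} used $F \mapsto F(t)$ directly, but the henselization construction also works and recovers $F$ as the residue field, so the general theorem subsumes it (with a different, and in some ways cleaner, reduction).

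Finally, I would note that, having produced a computable functor, Theorem \ref{thm:equivalence} converts it into a uniform effective interpretation of $\TD_r$ in $\TD_{r+1}$, which is the functorial strengthening claimed in the introduction. I expect the write-up to be: (i) define $v$ on $F(t)$ and $\Phi(F) = (F(t))^h$, with a lemma that its atomic diagram is $F$-computable uniformly in $r$; (ii) define $\Phi_*$ via the universal property and check functoriality and computability; (iii) prove the embedding property using Lemma \ref{uniq-val} to upgrade a field isomorphism to a valued-field isomorphism, then pass to residue fields to get $F \cong F'$; (iv) remark on the effective interpretation. The one genuinely delicate verification is (iii).
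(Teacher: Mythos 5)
Your proposal matches the paper's proof essentially step for step: same construction $\Phi(F)=F(t)^h$ with the $t$-adic valuation, same use of Lemma~\ref{uniq-val} to upgrade an arbitrary field isomorphism between the henselizations to a valued-field isomorphism, same recovery of $F$ as the residue field $v^{-1}[0,\infty]/v^{-1}(0,\infty]$, and the same appeal to the universal property of henselization (uniqueness of the extending isomorphism) to make $\Phi_*$ well-defined and functorial. The only point you could not pin down---that a computable copy of $(F(t)^h, v_t^h)$ is obtainable uniformly from $F$---the paper discharges by citing \cite[Proposition~4]{Ha18} rather than \cite{En05}, but your outline of what is needed there is correct.
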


\begin{proof}

For 
$F\in \TD_r$, we let $\Phi(F)$ be the henselization of $F(t)$; i.e., \mbox{$\Phi(F) = F(t)^h$}. More precisely, we first apply a uniform effective procedure to pass from $F$ to the 
valued field $(F(t),v_t)$, where $v_t(p)$ is defined by $v_t(p) = \max\{n : t^n \mid p\}$ for a polynomial $p\in F[t]$, and $v_t(r) = v_t(p)-v_t(q)$ for $r = p/q\in F(t)$.  By \cite[Proposition 4]{Ha18}, there is a computable embedding of $(F(t),v_t)$ into a computable copy of $(F(t)^h,v_t^h)$. We take $\Phi(F)$ to be $F(t)^h$. 
It is clear that if $F \cong E$, then we have $\Phi(F) \cong \Phi(E)$. 

Now, suppose $\Phi(F) \cong \Phi(E)$.  By construction, $v_t$ is a discrete valuation of $F(t)$, so $v_t^h$ is also discrete \cite[Theorem 5.2.5]{En05}. Thus, by 
Lemma \ref{uniq-val}, we may let $v$ and $u$ be the unique Henselian valuations of $\Phi(E)$ and $\Phi(F)$, respectively.  Therefore, $(\Phi(F),v) \cong (\Phi(E),u)$ as valued fields. In $\Phi(F) = F(t)^h$, $F \cong v^{-1}[0,\infty]/v^{-1}(0,\infty]$.  Similarly, in $\Phi(E) = E(t)^h$, $E \cong u^{-1}[0,\infty]/u^{-1}(0,\infty]$. Thus, we have $F \cong E$. This shows that $\Phi$ is a Turing computable embedding.

To show that $\Phi$ is functorial, for some $f: F\cong E$, we take $\Phi_*(F\oplus f\oplus E)$ to be the map that takes $a\in F$ to $f(a) \in E$, and takes $t$ to $t$. Since $F(t)^h$ and $E(t)^h$ are algebraic over $F(t)$ and $E(t)$, respectively, we may construct an isomorphism $\Phi^*(F\oplus f\oplus E) = \tilde f$ from $F(t)^h$ to $E(t)^h$ by mapping roots of polynomials to corresponding roots. However, by \cite[Theorem 5.2.2]{En05}, there is a unique isomorphism between any two henselizations of a field, which must be the isomorphism $\tilde f$ we constructed. Thus, $\Phi^*$ is functorial and $(\Phi,\Phi_*)$ form a computable functor from $\TD_r$ into $\TD_{r+1}$.
\end{proof}

Now that we have $\FD_0 <_{tc} \FD_1 \le_{tc} \FD_2 \le_{tc}\dots$, it is natural to ask if we have strictness for $r \ge 1$, as in the case of torsion-free abelian groups. 
In the Borel setting, Thomas and Velickovic \cite{TV} showed that the class $\TD_{13}$ is universal among essentially countable Borel equivalence relations, so $\FD_r\le_B \FD_{13}$ for every $r$. They mentioned in their paper that they did not attempt to make the transcendence degree as low as possible, and they asked whether $\FD_1$ is already universal. However, the embedding from $\FD_r$ to $\FD_{13}$ induced by their proof is not computable, and it is still open whether there exists a Turing computable embedding.

\begin{question}
For which $r\in\omega$ is there a Turing computable embedding (possibly functorial?) from $\FD_{r+1}$ to $\FD_r$?
And for each $r=1,\ldots, 12$, is there a Borel embedding from $\FD_{r+1}$ to $\FD_r$?
\end{question} 

\section{From Fields to Groups}
\label{sec:fieldstogroups}

In Section \ref{sec:intro}, we saw that the isomorphism relation on $\FD_0$ is effective $\Pi_2$ and for $r > 0$, the isomorphism relation on $\TFAb_r$ and $\FD_r$ is effective $\Sigma_3$.  We promised to prove completeness, and we do that in Subsection \ref{sec:complete}.  In Subsection \ref{sec:non-embed}, we show that for $r > 0$, there is no functorial computable reduction from $\FD_r$ to $\TFAb_1$.   

\subsection{Completeness} \label{sec:complete}

When we say that a set (or relation) $A$ is ``complete'' for some complexity class $\Gamma$, we mean that $A$ is in $\Gamma$, and every set in $\Gamma$ is reducible to $A$ using a reduction function of the ``appropriate'' kind, so that the sets reducible to $A$ are \emph{exactly} those in $\Gamma$.  For sets of numbers and complexity classes $\Gamma$ in the arithmetical or hyperarithmetical hierarchy, the appropriate reduction functions are computable---\emph{complete} means \emph{$m$-complete}.  For $\Gamma$ consisting of sets $\Sigma_\alpha$ or $\Pi_\alpha$ relative to $X$, the appropriate reduction functions are $X$-computable.  
For subsets of $2^\omega$ and complexity classes $\Gamma$ in the effective Borel hierarchy, again the appropriate reduction functions are computable.  For $\Gamma$ in the Borel hierarchy, the appropriate reduction functions are continuous; i.e., $X$-computable for some $X$.  For more on this, see \cite{HKLM}.  We illustrate with a simple example.  

\begin{example}

Let $F$ be the field obtained by adding to $\mathbb{Q}$ a primitive root of each polynomial in a computable sequence $p_n(x)$, where the field generated by roots of $p_k(x)$ for $k < n$ does not have a root of $p_n(x)$.   We could could take $p_n(x)$ to be the cyclotomic polynomial $1 + x + \ldots + x^{k-1}$ where $k$ is the $n^{th}$ prime.  

\end{example}

Clearly, $F$ has a computable copy.  The set $I(F)$, consisting of indices for computable copies of $F$, is $\Pi^0_2$ in the arithmetical hierarchy.  It is complete $\Pi^0_2$.  To show this, it is enough to show that for the set $Inf = \{n:\mbox{$W_n$ is infinite}\}$, which is known to be complete $\Pi^0_2$, $Inf\leq_m I(F)$.  We define a uniformly computable sequence $(F_n)_{n\in\omega}$, where at stage $s$, we check the size of $W_{n,s}$ for each $n < s$.  If the size is $k$, then we put into $F_n$ primitive roots for the first $k$ polynomials, no more.  We know the indices for $F_n$, and our $m$-reduction takes $n$ to the index for $F_n$ such that $n\in Inf$ iff $F_n\cong F$.   

\begin{prop}

For our field $F$, $\Iso{F}$, the set of isomorphic copies of $F$, is complete effective $\Pi_2$.  
It is also complete $X$-effective $\Pi_2$ and complete $\mathbf{\Pi_2}$.

\end{prop}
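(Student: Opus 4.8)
The plan is to show that $\Iso{F}$ sits in the class effective $\Pi_2$ and then that every effective $\Pi_2$ subset of $2^\omega$ (and, relativized, every $X$-effective $\Pi_2$ set, and, for continuous reductions, every $\mathbf{\Pi_2}$ set) reduces to it via the appropriate reduction function. The membership direction is the easy half: two fields of transcendence degree $0$ are isomorphic iff they satisfy the same existential sentences (Proposition, part (2) of the introduction), so being a copy of $F$ is captured by the $\Pi_2$ condition ``for each prime-index $n$, $F'$ has a primitive root of $p_n$'' together with ``$F'$ has no element of degree $> $ (the relevant bound) witnessing an extra algebraic relation''; more cleanly, $\Iso{F}$ is axiomatized by a computable $\Pi_2$ sentence in $L_{\omega_1\omega}$ saying that $F'$ is a field of characteristic $0$ realizing exactly the existential type of $F$, and by the effective Lopez--Escobar/Vaught theorem (Vanden Boom) this makes the class effective $\Pi_2$.

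For hardness, I would adapt the $m$-reduction from the Example and Proposition preceding the statement, promoting it from indices-for-computable-copies to a continuous (resp.\ $X$-computable, resp.\ computable) map on Baire/Cantor space. Given a set $A$ that is effective $\Pi_2$, write $A = \{ Z : (\forall i)(\exists j)\, R(Z \restriction j, i) \}$ for a computable relation $R$. From an oracle $Z$, uniformly build a field $F_Z$: devote the $i$-th ``block'' of the construction to monitoring the $\exists j$ quantifier for $i$, and adjoin the primitive root of the $i$-th designated cyclotomic polynomial $p_i$ to $F_Z$ exactly when (and as soon as) a witness $j$ with $R(Z\restriction j, i)$ appears. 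If $Z \in A$ every block eventually fires, so $F_Z$ contains a primitive root of every $p_i$ and $F_Z \cong F$; if $Z \notin A$ some block never fires, so $F_Z$ omits a primitive root of some $p_i$ and $F_Z \not\cong F$. The map $Z \mapsto F_Z$ is continuous (and computable when $R$ is, and $X$-computable in the relativized case), so it is a reduction of $A$ to $\Iso{F}$ of the appropriate kind; running this for an arbitrary effective $\Pi_2$ (resp.\ $X$-effective $\Pi_2$, resp.\ $\mathbf{\Pi_2}$) $A$ gives all three completeness claims at once.

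The one point that needs genuine care --- and is the main obstacle --- is ensuring that adjoining roots of the $p_i$ really is ``independent'' across blocks, i.e.\ that $F_Z$ is genuinely the field obtained by adjoining primitive roots of precisely those $p_i$ whose block fires, with no accidental collapse. This is exactly the condition arranged in the Example (``the field generated by roots of $p_k$ for $k<n$ does not have a root of $p_n$''), and it is why cyclotomic polynomials $1 + x + \cdots + x^{k-1}$ for $k$ the $n$-th prime are a convenient choice: the extension $\Q(\zeta_k)$ has degree $k-1$ over $\Q$, and by elementary ramification/degree considerations $\Q(\zeta_{k_1},\ldots,\zeta_{k_m})$ never contains $\zeta_{k}$ for a new prime $k$. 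I would cite this standard fact (or the reference the paper gives, \cite{HKLM} / the preceding Example) rather than reprove it. A secondary bookkeeping point is that in the ``block never fires'' case one must still output an honest field of transcendence degree $0$ with universe $\omega$: this is routine, since a partial adjunction-of-roots construction can always be completed to a field with domain $\omega$ in a $Z$-computable way, and the resulting field is still recognizably not $\cong F$ because it provably lacks a primitive root of the offending $p_i$.

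Finally, I would remark that the same argument, combined with the $tc$-embedding $\FD_0 \le_{tc}\FD_1$ of Proposition \ref{prop:FD0case} and the fact that $\Iso{F}$ is complete effective $\Pi_2$ already at transcendence degree $0$, is the prototype for the analogous completeness statements for $\TFAb_r$ and $\FD_r$ with $r>0$ at the effective $\Sigma_3$ level, which the paper promises to prove in the same subsection; the field $F$ here plays the role of the ``hard instance'' and the block construction is reused with an extra layer of quantifier to handle the $\Sigma_3$ case.
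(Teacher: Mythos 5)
Your proposal is correct and follows essentially the same route as the paper: both start from a copy of $\Q$ and adjoin the primitive root of $p_n(x)$ exactly when the $n$-th $\Sigma_1$ condition of the given effective $\Pi_2$ set is seen to hold, using the independence of the chosen cyclotomic extensions to detect which roots were adjoined; the $X$-effective and boldface cases then follow by relativization/continuity just as you say. The only cosmetic difference is that the paper first replaces the intersectands $D_n$ by a nested sequence with $D_0=\omega$ (so that in the non-membership case only an initial segment of roots is adjoined), whereas your $\forall i\,\exists j\,R(Z\restriction j,i)$ normal form adjoins an arbitrary $\Sigma_1$-in-$Z$ set of roots --- but as you note, that still omits at least one $p_{i_0}$ when $Z\notin A$, so it works equally well.
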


\begin{proof}

Since $F\in \FD_0$, $\Iso{F}$ is effective $\Pi_2$.  We show that it is complete.  Let $D$ be an effective $\Pi_2$ set, with index $d$.  The index $d$ gives a c.e.\ set of indices for effective $\Sigma_1$ sets with intersection $D$.  Adding an index for $\omega$, if necessary, we pass effectively to a sequence of indices $d_n$ for effective $\Sigma_1$ sets $D_n$ such that $D = \cap_n D_n$.  We may assume that $D_0 = \omega$ and that the sets $D_n$ are nested.  We want a computable reduction $\Phi$ of $D$ to $\Iso{F}$.  This $\Phi$, defined on all of $2^\omega$, takes each $f$ to a field in $\FD_0$ such that $f\in D$ iff $\Phi(f)\cong F$.  To compute $\Phi(f)$, we start enumerating the diagram of a field that looks like $\mathbb{Q}$, and we add a primitive root for $p_n(x)$ if and when we see that $f\in D_n$.  Note that for all $f$, $\Phi(f)$ is in $\FD_0$.

Since $\Iso{F}$ is effective $\Pi_2$, it is $X$-effective $\Pi_2$ for all $X$.  Relativizing what we did above, we can show that it is complete $X$-effective $\Pi_2$.  For each $X$-effective $\Pi_2$ set $D$, we have an $X$-computable reduction of $D$ to $\Iso{F}$.  Moreover, the range of the reduction consists of fields in $\FD_0$.  The fact that $\Iso{F}$ is effective $\Pi_2$ means that it is $\mathbf{\Pi_2}$ in the Borel hierarchy.  To show that it is complete $\mathbf{\Pi_2}$, we note that every $\mathbf{\Pi_2}$ set $D$ is $X$-effective $\Pi_2$ for some $X$, and our $X$-computable reduction of $D$ to $\Iso{F}$ is continuous.     
\end{proof}      

\begin{prop}

The isomorphism relation on $\FD_0$ is complete effective $\Pi_2$.

\end{prop}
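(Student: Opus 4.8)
The plan is to derive this immediately from the preceding Proposition on the single field~$F$.

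\emph{Membership.} A pair $(\A,\B)\in 2^\omega\times 2^\omega$ lies in the isomorphism relation on $\FD_0$ exactly when $\A$ and $\B$ are both fields of characteristic $0$ that are algebraic over $\Q$ and they satisfy the same existential sentences. Being an algebraic extension of $\Q$ is effective $\Pi_2$ (``is a field of characteristic $0$'' is $\Pi_1$, and ``every element is algebraic over $\Q$'' is $\Pi_2$), and having the same existential theory is $\Pi_2$ uniformly in the two diagrams, since each biconditional ``$\A\models\sigma\leftrightarrow\B\models\sigma$'' for existential $\sigma$ is $\Pi_2$ and we conjoin over the c.e.\ set of existential $\sigma$. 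So the relation is effective $\Pi_2$; this is just the partial proof given for the Proposition in Section~\ref{sec:intro}.

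\emph{Completeness.} Fix the field $F$ of the preceding Proposition together with a computable copy $F_0\cong F$. Let $D\subseteq 2^\omega$ be an arbitrary effective $\Pi_2$ set. By the preceding Proposition there is a computable reduction $\Phi$ of $D$ to $\Iso{F}$ whose range lies entirely inside $\FD_0$: that is, $\Phi(f)\in\FD_0$ for every $f\in 2^\omega$, and $f\in D$ iff $\Phi(f)\cong F$ iff $\Phi(f)\cong F_0$. (In that proof this was arranged by having $\Phi(f)$ start from a copy of $\Q$ and only ever adjoin algebraic elements, so $\Phi(f)$ is always in $\FD_0$ regardless of whether $f\in D$.) Now set $\Psi(f)=(\Phi(f),F_0)$, regarded as an element of $2^\omega\times 2^\omega$, and hence of $2^\omega$ via the usual computable pairing homeomorphism. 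Then $\Psi$ is computable, its range lies in $\FD_0\times\FD_0$, and $f\in D$ if and only if $\Psi(f)$ lies in the isomorphism relation on $\FD_0$. Hence that relation is complete effective $\Pi_2$.

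\emph{Relativization.} The same argument relativizes and continuizes with no change, using the corresponding clauses of the preceding Proposition. For any oracle $X$ and any $X$-effective $\Pi_2$ set $D$, the relativized reduction $\Phi$ (with range in $\FD_0$) yields an $X$-computable $\Psi$ as above, so the isomorphism relation on $\FD_0$ is complete $X$-effective $\Pi_2$; and since every $\mathbf{\Pi_2}$ subset of $2^\omega$ is $X$-effective $\Pi_2$ for a suitable $X$ and each $X$-computable function is continuous, it is complete $\mathbf{\Pi_2}$ as well. There is no genuine obstacle here: the whole difficulty was already handled in the preceding Proposition, and the only point needing care is that the reduction quoted from there really does always output an element of $\FD_0$, so that $\Psi$ lands in the domain of the relation we are reducing to.
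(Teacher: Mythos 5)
Your proof is correct and follows essentially the same route as the paper: cite the effective $\Pi_2$ bound from the introduction, then use the computable reduction $\Phi$ of an arbitrary effective $\Pi_2$ set $D$ to $\Iso{F}$ (whose range lies in $\FD_0$) and pair its output with a fixed copy of $F$ to reduce $D$ to the isomorphism relation on $\FD_0$. The relativized and boldface remarks are extra but harmless.
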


\begin{proof}

In Section 1, we saw that the isomorphism relation on $\FD_0$ is effective $\Pi_2$.  For completeness, take an effective $\Pi_2$ set $D$.  For the specific field $F$ in our example, we have a computable reduction $\Phi$ of $D$ to $\Iso{F}$, where for all $f\in 2^\omega$, $\Phi(f)\in FD_0$.  We get a reduction $\Psi$ of $D$ to the isomorphism relation on $\FD_0$, where $\Psi(f)$ is the pair $(F,\Phi(f))$.  
\end{proof}  

The effective Borel hierarchy is closely tied to the hyperarithmetical hierarchy.  In \cite{HKLM}, it is observed that for any function $f\in 2^\omega$ and any computable ordinal $\alpha$, the set $T_{\Sigma_\alpha}(f)$ ($T_{\Pi_\alpha}$) of indices for effective $\Sigma_\alpha$ ($\Pi_\alpha$) sets that contain $f$ is $\Sigma^0_\alpha$ 
($\Pi^0_\alpha$), uniformly in $f$.  In \cite{HKLM}, this is used in some completeness proofs, and we shall use it here.                

We turn to $\TFAb_1$.  In \cite{KS}, it is shown that for any computable subgroup of $\mathbb{Q}$ in which there is a computable set of primes that divide $1$ just finitely, but at least once, the set of indices for computable copies is $m$-complete $\Sigma^0_3$.  One example of such a group is the subgroup of $\mathbb{Q}$ generated by $\frac{1}{p}$ for primes $p$.  It is easy to see that $I(G)$ is $\Sigma^0_3$.  The completeness proof in \cite{KS} involves showing that $Cof\leq_m I(G)$.  The computable reduction, defined on all $n\in\omega$, gives a uniformly computable sequence of groups $(G_n)_{n\in\omega}$ such that $n\in Cof$ iff $G_n\cong G$.  Each $G_n$ is isomorphic to a subgroup of $\mathbb{Q}$, in which each prime divides $1$ at most once, and in $G_n$, the $k^{th}$ prime divides $1$ iff $k\in W_n$.          

\begin{prop}

Let $G$ be as above.  Then the set $\Iso{G}$ is complete effective $\Sigma_3$.  It is also complete $X$-effective $\Sigma_3$ for all $X$, and complete $\mathbf{\Sigma_3}$.

\end{prop}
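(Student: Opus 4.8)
The plan is to lift the $m$-completeness of the index set $I(G)$ obtained in \cite{KS} up to completeness of $\Iso{G}$ among subsets of $2^\omega$, in exact parallel with the proof of the analogous statement for $\Iso{F}$ given above. First, since $G$ is a computable member of $\TFAb_1$, its $\Sigma_3$ Scott sentence (Proposition \ref{group-Scott-sent}) is computable, so $\Iso{G}$ is effective $\Sigma_3$, and therefore $X$-effective $\Sigma_3$ for every $X$ and $\mathbf{\Sigma_3}$. It remains only to prove completeness at these three levels, and the three arguments differ only by relativization.

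For the effective level, let $D\subseteq 2^\omega$ be effective $\Sigma_3$ with index $d$. By the observation from \cite{HKLM} recalled above, the set $T_{\Sigma_3}(f)$ is $\Sigma^0_3$ uniformly in $f$, so ``$f\in D$'' is a $\Sigma^0_3(f)$ predicate, uniformly in $f$ and $d$. Composing with the fact that $Cof$ is $\Sigma^0_3$-complete via an $m$-reduction obtained uniformly from the $\Sigma^0_3$-index (and this relativizes), we obtain a number $e$, computable from $d$, such that for every $f\in 2^\omega$, $f\in D$ if and only if $W^f_e$ is cofinite. I would then let $\Phi$ be the Turing functional which, with oracle $f$, builds the atomic diagram of a subgroup $\Phi(f)$ of $\Q$ on universe $\omega$ by starting from a copy of $\Z$ and, whenever a number $k$ enters $W^f_e$, adjoining a single fresh element in the role of $\frac{1}{p_k}$ (where $p_k$ is the $k$-th prime) together with the elements this forces --- that is, $\Phi(f)$ is built from the c.e.-in-$f$ set $W^f_e$ exactly as the groups $G_n$ of \cite{KS} are built from the c.e.\ sets $W_n$. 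This $\Phi$ is total and computable, its output always lies in $\TFAb_1$, and by Baer's classification of rank-$1$ torsion-free abelian groups by type, $\Phi(f)\cong G$ precisely when cofinitely many primes divide $1$ in $\Phi(f)$, i.e.\ precisely when $W^f_e$ is cofinite, i.e.\ precisely when $f\in D$. So $\Phi$ is a computable reduction of $D$ to $\Iso{G}$, and $\Iso{G}$ is complete effective $\Sigma_3$.

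The remaining two parts follow by relativization. For an $X$-effective $\Sigma_3$ set $D$, run the same argument with oracle $f\oplus X$: the relativized \cite{HKLM} observation gives a number $e$ with $f\in D\iff W^{f\oplus X}_e$ cofinite, and the resulting $\Phi$ is an $X$-computable reduction of $D$ to $\Iso{G}$. Finally, every $\mathbf{\Sigma_3}$ subset of $2^\omega$ is $X$-effective $\Sigma_3$ for some $X$, so the $X$-computable reduction just obtained is continuous, which is the reduction notion appropriate to the Borel hierarchy; hence $\Iso{G}$ is complete $\mathbf{\Sigma_3}$.

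The one point genuinely requiring care is uniformity: one must confirm that the $m$-reduction to $Cof$ extracted from a $\Sigma^0_3$-index can be taken computable in the index alone, with $f$ (resp.\ $f\oplus X$) used only as the oracle enumerating $W^f_e$, so that the single number $e$ does not depend on the input $f$ and $\Phi$ genuinely reduces $D$ on all of $2^\omega$. The remaining checks --- that $\Phi(f)$ is a well-defined group in $\TFAb_1$ for \emph{every} $f$, including junk inputs, and that the isomorphism criterion $\Phi(f)\cong G\iff W^f_e$ cofinite holds --- are routine and are already implicit in the construction of the groups $G_n$ in \cite{KS}.
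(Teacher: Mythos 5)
Your proposal is correct and follows essentially the same route as the paper: establish that $\Iso{G}$ is effective $\Sigma_3$, use the uniform $\Sigma^0_3(f)$ bound on $T_{\Sigma_3}(f)$ from \cite{HKLM} together with the relativized completeness of $Cof$ to get a single index (your $e$, the paper's $d'$) independent of $f$, and then output the relativized \cite{KS} group built from $W^f_e$, relativizing to $X$ and invoking continuity of $X$-computable maps for the boldface case. The only difference is presentational: you spell out the \cite{KS} group construction and the Baer-type criterion explicitly, whereas the paper cites the relativized construction as a black box.
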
  

\begin{proof}

Suppose $D\subseteq 2^\omega$ is effective $\Sigma_3$, with index $d$.  We need a computable reduction $\Phi$ of $D$ to $\Iso{G}$.  The set $T_{\Sigma_3}(f)$ is $\Sigma^0_3$ relative to $f$, with index computed in a uniform way from $f$.
Given $f\in 2^\omega$, we relativize the construction from \cite{KS}, in a uniform way.  Letting $Cof^f = \{n:\mbox{$W_n^f$ is co-finite}\}$, we get an $f$-computable sequence of groups $(G_{f,n})_{n\in\omega}$, all in $\TFAb_1$, such that $G_{f,n}\cong G$ iff $n\in Cof^f$.  The set $Cof^f$ is complete among $\Sigma^0_3(f)$ sets, with reduction functions computed uniformly from $f$.  In particular, knowing the index $d$ for $D$, we can pass effectively from an index for $T_{\Sigma_3}(f)$ to a number $d'$ such that $d\in T_{\Sigma_3}(f)$ iff $d'\in Cof^f$.  All together, we have $f\in D$ iff $d\in T_{\Sigma_3}(f)$ iff $d'\in Cof^f$ iff $G_{f,d'}\cong G$.  So, we take $\Phi(f)$ to be $G_{f,d'}$.  

The fact that $\Iso{G}$ is effective $\Sigma_3$ implies that it is $X$-effective $\Sigma_3$ for all $X$, and it is $\mathbf{\Sigma_3}$.  To show that $\Iso{G}$ is complete $X$-effective $\Sigma_3$, we need, for each $X$-effective $\Sigma_3$ set $D$, an $X$-computable reduction $\Phi$ of $D$ to $\Iso{G}$.  We obtain $\Phi$ by relativizing the construction above.  Moreover, $\Phi:2^\omega\rightarrow \TFAb_1$.           
\end{proof} 

\begin{prop}

For $r > 0$, each of the classes $\TFAb_r$, $\FD_r$ contains a structure $\mathcal{A}$ for which $\Iso{\mathcal{A}}$ is complete effective $\Sigma_3$, complete $X$-effective $\Sigma_3$ for all $X$, and complete $\mathbf{\Sigma_3}$.   

\end{prop}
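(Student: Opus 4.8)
The plan is to reduce to the previously established case $r=0$ (for fields) or the rank--$1$ case (for groups) using the functorial $tc$-embeddings already constructed in this paper. Concretely, for $\FD_r$ with $r>0$, I would start from the field $F\in\FD_0$ of the example, whose $\Iso{F}$ is complete effective $\Sigma_3$ — wait, that is only $\Pi_2$; instead I would start from the group $G\subseteq\Q$ generated by the $\frac1p$ (the group from \cite{KS}), for which the previous proposition shows $\Iso{G}$ is complete effective $\Sigma_3$, complete $X$-effective $\Sigma_3$, and complete $\mathbf{\Sigma_3}$. So $\TFAb_1$ already contains the desired structure when $r=1$.

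For $\TFAb_r$ with $r>1$, I would take $\mathcal A = G\oplus\Z^{r-1}$, where $G$ is the rank--$1$ group above. The map $H\mapsto H\oplus\Z^{r-1}$ is a functorial $tc$-embedding of $\TFAb_1$ into $\TFAb_r$ (it is computable, and $H_0\cong H_1\iff H_0\oplus\Z^{r-1}\cong H_1\oplus\Z^{r-1}$ since rank--$1$ summands are recoverable up to isomorphism). Composing the complete reduction $\Phi\colon 2^\omega\to\TFAb_1$ from the previous proof with this embedding gives an $X$-computable (respectively computable, respectively continuous) reduction of any $X$-effective $\Sigma_3$ set to $\Iso{\mathcal A}$, landing in $\TFAb_r$; and $\Iso{\mathcal A}$ is itself effective $\Sigma_3$ by Proposition~\ref{group-Scott-sent}. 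So all three completeness claims transfer. For $\FD_r$ with $r>0$: first apply the functorial $tc$-embedding $\Phi_{\mathrm{gf}}\colon\TFAb_r\to\FD_r$ of Theorem~\ref{thm:TFAbtoTD} to the structure $\mathcal A\in\TFAb_r$ just produced, obtaining a field $\mathcal A'=\Phi_{\mathrm{gf}}(\mathcal A)\in\FD_r$. Since $\Phi_{\mathrm{gf}}$ is a $tc$-embedding, $\mathcal B\cong\mathcal A\iff\Phi_{\mathrm{gf}}(\mathcal B)\cong\mathcal A'$, so composing again preserves the completeness reduction; and $\Iso{\mathcal A'}$ is effective $\Sigma_3$ by Proposition~\ref{field-Scott-sent}.

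The only point requiring genuine care is verifying that composing a complete reduction with a functorial $tc$-embedding again yields a \emph{complete} reduction of the right effectivity — i.e., that if $\Phi\colon 2^\omega\to K$ is $X$-computable with $f\in D\iff\Phi(f)\in\Iso{\mathcal A}$, and $\Theta\colon K\to K'$ is a $tc$-embedding with $\mathcal B\cong\mathcal A\iff\Theta(\mathcal B)\cong\Theta(\mathcal A)$, then $f\mapsto\Theta(\Phi(f))$ is an $X$-computable reduction of $D$ to $\Iso{\Theta(\mathcal A)}$. This is routine: a Turing operator composed with an $X$-computable function is $X$-computable, and continuity is likewise preserved, so the $\mathbf{\Sigma_3}$-completeness follows from the $X$-effective case exactly as in the earlier propositions. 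I expect the main (minor) obstacle to be bookkeeping: one must also note the output structures genuinely lie in $\TFAb_r$ or $\FD_r$ (not a lower rank), which holds because $\mathcal A=G\oplus\Z^{r-1}$ has rank exactly $r$ and $\Phi_{\mathrm{gf}}$ preserves rank by Theorem~\ref{thm:TFAbtoTD}, and that the upper bounds $\Iso{\mathcal A}\in$ effective $\Sigma_3$ come directly from Propositions~\ref{group-Scott-sent} and~\ref{field-Scott-sent}. No descriptive-set-theoretic input beyond what is already in the paper is needed.
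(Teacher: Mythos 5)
Your proof is correct and follows the same overall strategy as the paper: fix the rank-$1$ group $G$ from the preceding proposition, push it forward through functorial $tc$-embeddings, and observe that composing the complete reduction $\Phi\colon 2^\omega\to\TFAb_1$ with a $tc$-embedding preserves both directions of the biconditional and the relevant effectivity/continuity. The one place you differ is in the routing for the field case: you go $\TFAb_1\hookrightarrow\TFAb_r\hookrightarrow\FD_r$ (one application of $\oplus\Z^{r-1}$, then one application of Theorem~\ref{thm:TFAbtoTD} at rank $r$), while the paper goes $\TFAb_1\hookrightarrow\FD_1\hookrightarrow\FD_2\hookrightarrow\cdots\hookrightarrow\FD_r$ (Theorem~\ref{thm:TFAbtoTD} at rank~$1$, then $r-1$ iterations of the Henselization embedding from Theorem~\ref{thm:fieldsupwards}). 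Both chains are legitimate and interchangeable here; yours is marginally tidier since it uses the Henselization machinery not at all, while the paper's matches the narrative order in which its embeddings were introduced. Your explicit identification of $H\mapsto H\oplus\Z^{r-1}$ as the $\TFAb_1\to\TFAb_r$ embedding (which the paper invokes without naming) and your check that the outputs land in the right rank classes are both appropriate, and the upper bounds via Propositions~\ref{group-Scott-sent} and~\ref{field-Scott-sent} are exactly as the paper uses them.
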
 

\begin{proof}

We consider the groups first.  Let $G$ be the group in the previous proposition.  This is the structure we want in $\TFAb_1$.  For $r > 1$, we have a $tc$-embedding $\Psi$ of $\TFAb_1$ in $\TFAb_r$.  Let $\mathcal{A}$ be $\Psi(G)$.  We know that $\Iso{\mathcal{A}}$ is effective $\Sigma_3$.  For completeness, let $D$ be an effective ($X$-effective) $\Sigma_3$ set.
We have a computable ($X$-computable) reduction $\Phi$ of $D$ to $\Iso{G}$, where $\Phi:2^\omega\rightarrow \TFAb_1$.  For $f\in 2^\omega$, we have $f\in D$ iff $\Phi(f)\cong G$ iff $\Psi(\Phi(f))\cong\mathcal{A}$.  Thus, $\Psi(\Phi(f))$ is a computable ($X$-computable) reduction of $D$ to $\Iso{\mathcal{A}}$.  Moreover, for all $f$, $\Psi(\Phi(f))$ is in $\TFAb_r$.  Thus, $\Iso{\mathcal{A}}$ is complete effective $\Sigma_3$ and complete $X$-effective $\Sigma_3$.  Every $\mathbf{\Sigma_3}$ set $D$ is $X$-effective $\Sigma_3$ for some $X$, and our $X$-computable reduction of $D$ to $\Iso{\mathcal{A}}$ is continuous.  Therefore, $\Iso{\mathcal{A}}$ is also complete $\mathbf{\Sigma_3}$.  

We turn to the fields.  Composing our $tc$-embeddings of $\TFAb_1$ in $\FD_1$ and $\FD_n$ in $\FD_{n+1}$, we arrive at a $tc$-embedding $\Phi$ of $\TFAb_1$ in $\FD_r$.  Let $\mathcal{A} = \Phi(G)$.  Proceeding exactly as above, we get the fact that $\Iso{\mathcal{A}}$ is complete effective $\Sigma_3$, complete $X$-effective $\Sigma_3$, and complete $\mathbf{\Sigma_3}$.  
\end{proof}  

We turn to the isomorphism relation on the classes.   

\begin{prop}

For each $r > 0$, the isomorphism relation on the classes $\TFAb_r$, $\FD_r$ is complete effective $\Sigma_3$.  (It is also complete $X$-effective $\Sigma_3$ and complete $\mathbf{\Sigma_3}$.)

\end{prop}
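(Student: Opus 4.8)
The plan is to derive this from the immediately preceding proposition, in exactly the way that the completeness of the isomorphism relation on $\FD_0$ was derived from the completeness of $\Iso{F}$. The upper bound is already available: part~(3) of the Proposition in Section~\ref{sec:intro} shows that for $r>0$ the isomorphism relations on $\TFAb_r$ and $\FD_r$ are effective $\Sigma_3$, hence $X$-effective $\Sigma_3$ for every $X$, and $\mathbf{\Sigma_3}$ in the Borel hierarchy. So only completeness needs an argument.

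Fix $r>0$, and let $\mathcal{A}$ be the structure in $\TFAb_r$ (respectively, in $\FD_r$) produced by the previous proposition, for which $\Iso{\mathcal{A}}$ is complete effective $\Sigma_3$. Since $\mathcal{A}$ has a computable copy with universe $\omega$, we may take $\mathcal{A}$ itself to be such a copy. Given an effective $\Sigma_3$ set $D\subseteq 2^\omega$, the previous proposition supplies a computable reduction $\Phi$ of $D$ to $\Iso{\mathcal{A}}$ whose range consists entirely of structures in $\TFAb_r$ (respectively, $\FD_r$); that is, for all $f$ we have $\Phi(f)\in\TFAb_r$ and $f\in D\iff\Phi(f)\cong\mathcal{A}$. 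Define $\Psi(f)=(\mathcal{A},\Phi(f))$, a pair of structures lying in the class. Then $\Psi$ is computable, and $f\in D$ if and only if $\Psi(f)$ lies in the isomorphism relation on the class. Hence that isomorphism relation is complete effective $\Sigma_3$.

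For the relativized and boldface versions, relativize: if $D$ is $X$-effective $\Sigma_3$, the previous proposition gives an $X$-computable reduction $\Phi$ of $D$ to $\Iso{\mathcal{A}}$ with range in the class, and $\Psi(f)=(\mathcal{A},\Phi(f))$ is then an $X$-computable reduction of $D$ to the isomorphism relation on the class. Since every $\mathbf{\Sigma_3}$ subset of $2^\omega$ is $X$-effective $\Sigma_3$ for some $X$, and an $X$-computable function is continuous, the same construction witnesses completeness for $\mathbf{\Sigma_3}$. The argument is uniform in the two classes, using the structure $\mathcal{A}$ appropriate to each.

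There is essentially no obstacle here. The one point to verify is that attaching a fixed computable copy of $\mathcal{A}$ as the first coordinate yields a legitimate reduction to the isomorphism relation \emph{on the class}---i.e., that both coordinates of $\Psi(f)$ always lie in $\TFAb_r$ (respectively, $\FD_r$)---and this is immediate, since $\mathcal{A}$ is in the class and $\Phi$ takes values in the class. All the real content sits in the preceding proposition, which in turn rests on the $tc$-embeddings of $\TFAb_1$ into $\TFAb_r$, of $\TFAb_1$ into $\FD_1$, and of $\FD_n$ into $\FD_{n+1}$, together with the relativized completeness result of \cite{KS}; the present statement is merely its repackaging as a statement about the isomorphism relation.
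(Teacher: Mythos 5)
Your proof is correct and follows essentially the same route as the paper: both reduce an effective ($X$-effective) $\Sigma_3$ set $D$ to the isomorphism relation by using the reduction $\Phi$ of $D$ to $\Iso{\mathcal{A}}$ from the preceding proposition, setting $\Psi(f)=(\mathcal{A},\Phi(f))$, and relativizing for the boldface case. The paper states its sketch for $\TFAb_1$ and says the other classes go the same way, which is precisely the uniform repackaging you describe.
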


\begin{proof}

We sketch the proof for $\TFAb_1$.  The proof for the other classes is the same.  The class $\TFAb_1$ is effective $\Pi_2$, and the isomorphism relation on $\TFAb_1$ is effective $\Sigma_3$.  For completeness, we use the fact that there is a specific group $G\in \TFAb_1$ for which $\Iso{G}$ is complete effective ($X$-effective) $\Sigma_3$, and that for each effective ($X$-effective) $\Sigma_3$ set $D$, we have a computable ($X$-computable) reduction $\Phi$ of $D$ to $\Iso{G}$, such that 
$\Phi:2^\omega\rightarrow \TFAb_1$.  Let $\Psi(f) = (G,\Phi(f))$.  We have $f\in D$ iff $\Phi(f)\cong G$, so  $\Psi$ is a computable ($X$-computable) reduction of $D$ to the isomorphism relation on $\TFAb_1$.  This shows that the isomorphism relation is complete effective ($X$-effective) $\Sigma_3$.  Take a set $D$ that is $\mathbf{\Sigma_3}$.  This is $X$-effective $\Sigma_3$ for some $X$.  Our $X$-computable reduction of $D$ to $\Iso{G}$ is continuous.  
\end{proof} 


\subsection{Non-embeddability} \label{sec:non-embed}

This section is devoted to proving a first step in answering the question of whether
there exist $tc$-reductions in the opposite direction, from $\TD_r$ to $\TFAb_r$.
Our result here will exclude functorial $tc$-reductions
for the case $r=1$ (indeed from $\TD_r$ to $\TFAb_1$ for each $r\geq 1$)
but it uses a specific fact about $\TFAb_1$ that fails for $r>1$ and also fails
in every $\TD_r$:  an automorphism of a group in $\TFAb_1$
that fixes a single non-identity element must be the identity automorphism.
Therefore, we remain uncertain whether this theorem can be extended to $\TFAb_r$
for $r>1$, let alone whether it holds when $r>0$ and the condition of functoriality
is dropped.

\begin{thm}
\label{thm:r=1}

For each $r>0$, there is no functorial computable reduction from $\FD_r$ to $\TFAb_1$.

\end{thm}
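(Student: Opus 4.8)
The plan is to contradict the hypothetical functor using two features special to this setting: the equivalence between functorial reductions and uniform interpretations (Theorem~\ref{thm:equivalence}), which will make $\Phi$ respect field embeddings, and the fact that among torsion-free abelian groups, mutual embeddability implies isomorphism exactly in rank~$1$. First I would reduce to the case $r=1$: if $(\Phi,\Psi)$ is a functorial computable reduction from $\FD_r$ to $\TFAb_1$ for some $r>1$, precompose it with the functorial Turing-computable embeddings $\FD_1\hookrightarrow\FD_2\hookrightarrow\cdots\hookrightarrow\FD_r$ obtained by iterating Theorem~\ref{thm:fieldsupwards}; the composite is again a functorial computable reduction, now from $\FD_1$ to $\TFAb_1$. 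So it suffices to rule out the case $r=1$, and I assume $(\Phi,\Psi)$ is such a reduction.

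By Theorem~\ref{thm:equivalence}, the computable functor $(\Phi,\Psi)\colon\Iso{\FD_1}\to\Iso{\TFAb_1}$ arises from a \emph{uniform effective interpretation}: a single tuple of generalized computable $\Sigma_1$ formulas which, interpreted inside any $F\in\FD_1$, produces (a copy of) the rank-$1$ group $\Phi(F)$ as a quotient $D^F/{\sim^F}$ of a set of tuples. The consequence I need is \emph{monotonicity}: since all the interpreting formulas, and those for the complementary relations $\not\sim$ and $x+y\neq z$, are $\Sigma_1$, they are preserved under embeddings of fields. Hence any field embedding $j\colon A\hookrightarrow B$ with $A,B\in\FD_1$ induces a group monomorphism $\Phi(A)\hookrightarrow\Phi(B)$ sending $[\bar a]\mapsto[j(\bar a)]$; that this is well-defined and injective uses that on $D^A$ the relations $\sim$ and the graph of $+$ agree with their restrictions from $D^B$ (which follows from preservation of both $\sim,+$ and their $\Sigma_1$ complements $\not\sim,\neg(+)$).

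Now I build the contradiction from a tower of elliptic function fields. Fix an elliptic curve $E/\Q$ with a rational point $P$ of order $2$ such that the quotient $E'=E/\langle P\rangle$ is not isomorphic to $E$ over $\Q$ — such pairs of $2$-isogenous, non-isomorphic elliptic curves over $\Q$ are classical (e.g. $y^2=x^3+x$ and $y^2=x^3-4x$, which are $2$-isogenous but are distinct quartic twists). The $2$-isogeny $\phi\colon E\to E'$ and its dual $\hat\phi\colon E'\to E$ satisfy $\hat\phi\circ\phi=[2]_E$, so pulling back functions yields field embeddings $\hat\phi^{*}\colon\Q(E)\hookrightarrow\Q(E')$ and $\phi^{*}\colon\Q(E')\hookrightarrow\Q(E)$. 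Presenting $\Q(E),\Q(E')$ as structures $A,B\in\FD_1$ with universe $\omega$, these become embeddings $A\hookrightarrow B$ and $B\hookrightarrow A$, so monotonicity gives monomorphisms $\Phi(A)\hookrightarrow\Phi(B)$ and $\Phi(B)\hookrightarrow\Phi(A)$. But $\Phi(A),\Phi(B)\in\TFAb_1$, and a monomorphism of rank-$1$ torsion-free abelian groups forces the Baer type (height sequence mod finite) of the source to be $\le$ that of the target; mutual embeddability then forces equality of types, hence $\Phi(A)\cong\Phi(B)$. Since $\Phi$ is a reduction it reflects isomorphism, so $A\cong B$, i.e. $\Q(E)\cong\Q(E')$ as abstract (hence $\Q$-)fields, i.e. $E\cong E'$ over $\Q$ — contradicting the choice of $E$.

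The main obstacle is making monotonicity fully rigorous: one must verify that $[\bar a]\mapsto[j(\bar a)]$ is a well-defined, injective group homomorphism, which is exactly what the ``positive $+$ negative'' $\Sigma_1$ data in Montalbán's notion of effective interpretation is designed to support, and one must be careful that the domains $D^A,D^B$ are compared correctly under $j$ (we need only $D^A$ mapped into $D^B$ with $\sim$ and $+$ faithfully transported, not that $D^A=D^B\cap A^{<\omega}$). Everything else is routine: the existence of the isogeny tower, the Baer-type comparison, and the reduction to $r=1$. I would also note that the argument genuinely uses the target rank $1$ — for $\TFAb_r$ with $r\ge2$, and for every $\FD_r$, there are mutually embeddable non-isomorphic structures — which is consistent with the theorem being stated only for $\TFAb_1$.
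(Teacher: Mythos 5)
Your argument is correct in outline, but it reaches the theorem by a genuinely different route than the paper. The paper's proof never converts the functor into an interpretation: it uses functoriality only at a single point, fixing a presentation $A\cong\Q(t_1,\ldots,t_r)$ and a finite initial segment $\sigma$ of its diagram on which $\Phi_*$ already fixes three elements $b_0,b_1,b_2$ of $\Phi(A)$; it then massages arbitrary $E,F\in\FD_r$ by finite permutations so their diagrams begin with $\sigma$, observes that any induced group isomorphism must fix the $b_i$, and concludes that $E\cong F$ is equivalent to a $\Pi_2$ divisibility-matching condition on the $b_i$ --- contradicting the completeness result of Section 6 that isomorphism on $\FD_r$ is complete effective $\Sigma_3$. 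The rank-$1$ fact it exploits is that a nonzero element's divisibility type determines the group. You instead pass through Theorem~\ref{thm:equivalence} to a uniform effective interpretation, use preservation of the generalized $\Sigma_1$ data (including the complementary relations, which is exactly what makes your induced map on quotients well-defined, injective, and additive) to get monotonicity under field embeddings, and then feed in mutually embeddable non-isomorphic fields of transcendence degree $1$ (function fields of $2$-isogenous, non-$\Q$-isomorphic elliptic curves such as $y^2=x^3+x$ and $y^2=x^3-4x$), using the rank-$1$ fact that mutual embeddability of torsion-free rank-$1$ groups forces equality of Baer types and hence isomorphism; the case $r>1$ you handle by precomposing with the functorial embeddings of Theorem~\ref{thm:fieldsupwards}. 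What each approach buys: the paper's proof works for every $r$ directly, needs no algebraic geometry, and leverages the $\Sigma_3$-completeness machinery it has already built, but it does need that completeness result; yours avoids the completeness results entirely and is more structural, at the cost of invoking the functor-versus-interpretation equivalence as a black box (you should note you only need the direction producing, from the given functor, formulas whose interpreted copy in each $F\in\FD_1$ is isomorphic to $\Phi(F)$, which is what the ``same natural way'' correspondence supplies), plus standard facts about isogenous curves and Baer types. Both arguments are honest about why they stop at target $\TFAb_1$: yours because mutual embeddability fails to imply isomorphism in higher rank, the paper's because fixing one element no longer rigidifies the group.
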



\begin{proof} 

Suppose that $(\Phi,\Phi_*)$ were such a functorial reduction.  Fix a presentation    $A\in\TD_r$ of the purely transcendental extension $\Q(t_1,\ldots,t_r)$ of the rationals.
By functoriality $\Phi_*^{A\oplus\text{id}\oplus A}$ must be the identity map on 
$\Phi(A)$, so fix an initial segment $\sigma$ of (the atomic diagram of) $A$ sufficiently long that there exist three distinct elements $b_0,b_1,b_2\in\Phi(A)$ with
$\Phi_*^{\sigma\oplus (\text{id}\upharpoonright|\sigma|) \oplus\sigma}(b_i)=b_i$.
It is consistent for us to extend $\sigma$ to the atomic diagram of a copy of $\Q$.
(This could fail for certain other fields in $\TD_r$, but since $A\cong\Q(t_1,\ldots,t_r)$, it must hold.)
Let $q_0,\ldots,q_k\in\Q$ be all of the (finitely many) rational numbers mentioned in $\sigma$ when $\sigma$ is viewed as an initial segment of the diagram of this copy of $\Q$, and let $a_i$ (for each $i\leq k$) be the element of $\omega$ representing the rational $q_i$ in this copy.

Now, consider the following procedure for determining the relation of isomorphism
on fields in $\TD_r$.  To get a contradiction, given any two fields $E,F\in\TD_r$, we will reduce the question of whether $E \cong F$ to an effective $\Pi_2$ property.
We use the atomic diagrams of $E,F$ 
to find the rationals $q_0,\ldots,q_k$ in each.  We construct a permutation $f_0$ of $\omega$ that is the identity on all but finitely many elements, but (for each $i$) maps the domain element of $F$ representing $q_i$ to $a_i$.  Thus the field $F_0$, built so that $f_0:F\to F_0$ is an isomorphism, has $\sigma$ as an initial segment of its atomic diagram, with the domain element $a_i\in F_0$ representing the rational $q_i$ for each $i$.  We similarly construct an isomorphism $e_0$ mapping $E$ onto another field $E_0$ with initial segment~$\sigma$.

Now we consider the two groups $G=\Phi(E_0)$ and $H=\Phi(F_0)$ in $\TFAb_1$.
Suppose $E\cong F$.  Then there exists an isomorphism $f:E_0\to F_0$, and
$g=\Phi_*^{E_0\oplus f\oplus F_0}$ will be a group isomorphism from $G$ onto $H$.
However, $E_0$ and $F_0$ both have initial segment $\sigma$, and since each element $a_i$
mentioned in $\sigma$ represents the rational $q_i$ in both $E_0$ and $F_0$,
the isomorphism $f$ must have $f(a_i)=a_i$ for all these $i$.  Therefore
$$g(b_i)=\Phi_*^{\sigma\oplus(\text{id}\upharpoonright |\sigma|)\oplus \sigma}(b_i)=b_i$$
for each of the elements $b_0,b_1, b_2$ described earlier.  Since $g$ is an isomorphism, each of these 
elements $b_i$ individually satisfies
$$ (\forall \text{~prime powers~}p^m)~[((\exists x\in G)~p^mx=b_i) \iff (\exists y\in H)~p^my=g(b_i)],$$
hence, also satisfies  
\begin{equation}
\label{eq:iso}
(\forall \text{~prime powers~}p^m)~[((\exists x\in G)~p^mx=b_i) \iff (\exists y\in H)~p^my=b_i].
\end{equation}

Conversely, if $G,H\in\TFAb_1$ satisfy Equation \ref{eq:iso} for both of these elements $b_i$, then $G\cong H$.  (One of $b_0,b_1,b_2$ could be the identity element in $G$, and another could be the identity in $H$.  However, as the three are distinct, the remaining element suffices to establish isomorphism between these rank-$1$ groups.)  Since $\Phi$ is a Turing-computable embedding, $G\cong H$
implies $E\cong F$, completing the converse.  Thus $E\cong F$ if and only if
Equation \ref{eq:iso} holds for both $b_0$ and $b_1$.

Thus, we have effectively reduced the question of isomorphism between $E$ and $F$
to the $\Pi^0_2$ property given above as Equation \ref{eq:iso}, using only finitely much information:
the three elements $b_0,b_1,b_2$, the rationals $q_0,\ldots,q_k$, and the elements $a_0,\ldots,a_k$ of $\omega$.  
This is impossible, since the isomorphism relation on $\FD_r$ is complete effective~$\Sigma_3$. 
\end{proof}

\section{Countable reduction} 
\label{sec:countable}


In \cite{MSEALS} the third author introduced the following definition of (computable) $\mu$-ary reduction,
an extension of a notion originally proposed in \cite{MN16}, by himself and Ng,
in which $\mu$ was assumed to be finite.

\begin{defn}[{\cite[Definition 1.3]{MSEALS}}]
Let $E$ and $F$ be equivalence relations on $S$ and $T$. For any cardinal $\mu < |S|$, we say a function $g:S^\mu \to T^\mu$ is a \emph{$\mu$-ary reduction of $E$ to $F$} if for every $\xvec = (x_\alpha)_{\alpha\in\mu} \in S^\mu$, we have
$$ \forall \alpha < \beta < \mu \ (x_\alpha E x_\beta \Leftrightarrow g_{\alpha}(\xvec) F g_{\beta}(\xvec)) $$
where $g_\alpha$ is the $\alpha$-th component of $g$. 

When $S \subseteq 2^\omega$ and $T\subseteq 2^\omega$, $\mu\leq\omega$, and $g$ is computable, we write $E \le^\mu_0 F$ and call this a \emph{computable $\mu$-ary reduction.} 
\end{defn}

In this section, we focus on computable countable reducibility, i.e., the case $\mu = \omega$.
Notice that when a 
Turing computable reduction exists, we automatically have a computable countable reduction.  For example, the $tc$-reduction $\Phi$ given in Proposition \ref{prop:FD0case} yields a 
computable countable
reduction $g:(\FD_0)^\omega\to (\FD_1)^{\omega}$, where
$$ g(F_0,F_1,F_2,\ldots) = (\Phi(F_0),\Phi(F_1),\Phi(F_2),\ldots).$$
On the other hand, often a computable countable reduction exists even when there is no Turing computable  
reduction.  We will see that under computable countable reducibility, all of
the isomorphism relations on $\tfa_r$ and $\FD_r$ are equivalent for all $r \ge 1$,
whereas Theorem \ref{thm:HjorthThomas} shows that the same fails to hold under 
Turing computable reducibility.
In fact, Theorem \ref{tfab-sigma3} will show that, under computable countable
reducibility, the isomorphism relation on $\TFAb_1$ has
the maximal possible complexity for effective $\Sigma_3$ equivalence relations on $2^\omega$.
Intuitively this suggests that the non-reducibility results of Hjorth and Thomas
(summarized as Theorem \ref{thm:HjorthThomas})
do not stem from mere syntactic complexity, but instead depend intimately on
the uncountable nature of the spaces in question.  (If the universe were collapsed
by a forcing extension, so that the original sets $\TFAb_r$ became countable,
then the extension would contain a full computable reduction from each original
$\TFAb_{r+1}$ to the preceding $\TFAb_r$, given by the procedure in
Theorem \ref{tfab-sigma3} --- although of course, in the extension,
the original set $\TFAb_{r+1}$ would be superseded by a larger collection of
torsion-free abelian groups of rank $r+1$.)

In \cite{MSEALS}, it is shown that the equivalence relations $E_0, E_1, E_2$ are all $\Sigma^0_2$-complete under computable countable reduction.   

\begin{remark}
Since we are working in the computable setting, the representation of structures do change the complexity. In particular, if $G\in \TFAb_r$ are represented as a subset of $\Q^r$, then the divisibility predicate $n \mid g$ becomes computable (checking if $g/n\in G$) and the complexity of the isomorphism becomes $\Sigma^0_2$ (actually complete). However, in this paper, the structure are represented as free-standing structures, i.e., a point in $Mod(L)$, thus the divisibility predicate is $\Sigma^0_1$ and the complexity of isomorphism is $\Sigma^0_3$. 
\end{remark}

We now prove that the isomorphism relation on $\tfa_1$ is $\Sigma^0_3$-complete
under computable countable reduction. As a result, for every $r\ge 1$,
there is a computable countable reduction from $\FD_r$ and $\tfa_r$ to $\tfa_1$. 

\begin{thm}\label{tfab-sigma3}
$\tfa_1$ is $\Sigma^0_3$-complete under computable countable reducibility. More precisely, every $\Sigma^0_3$ equivalence relation $E$ on a subspace of $2^\omega$ is computably countably reducible to $\tfa_1$.
\end{thm}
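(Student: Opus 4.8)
The plan is to build, uniformly and computably, from a single $\Sigma^0_3$ index for an equivalence relation $E$ on a subspace of $2^\omega$, a computable countable reduction into $\TFAb_1$. The natural target is a family of rank-$1$ groups indexed by subsets of $\omega$: for $S\subseteq\omega$, let $G_S\leq\Q$ be generated by $\{\tfrac1{p_n}: n\in S\}$ (with $p_n$ the $n$-th prime), so that $G_S\cong G_{S'}$ exactly when $S=S'$ as subsets of $\omega$ — i.e., when $1$ is divisible by the same primes in each. Thus an isomorphism question in $\TFAb_1$ is, at heart, an equality-of-subsets-of-$\omega$ question. The obstacle is that $E$ is only $\Sigma^0_3$, not $\Sigma^0_1$, so we cannot hope to compute a single such $S$ from each $x_\alpha$; instead we must exploit the countably-many-inputs-at-once feature of computable countable reducibility.

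The key idea is to use the other inputs as oracles/certificates. Given the full countable tuple $\xvec=(x_\alpha)_{\alpha\in\omega}\in S^\omega$, I would first note that each $x_\alpha$ is an element of $2^\omega$, hence an oracle, and the join $\bigoplus_\alpha x_\alpha$ is a well-defined member of $2^\omega$ as well. Using a standard trick (as in the completeness proofs earlier in the excerpt, via the relativized Khisamiev–Ng–style construction from \cite{KS} together with the sets $T_{\Sigma_3}(f)$), from any $f\in 2^\omega$ one gets, $f$-computably, a sequence of rank-$1$ groups $(G_{f,n})_{n}$ with $G_{f,n}\cong G$ iff $n\in Cof^f$, and $Cof^f$ is $\Sigma^0_3(f)$-complete with reductions computed uniformly from $f$. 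The plan is: from the $\Sigma^0_3$ index for $E$, pass uniformly to a $\Sigma^0_3$ index for the relation ``$x_\alpha\,E\,x_\beta$'' relative to $\bigoplus_\gamma x_\gamma$, then to the corresponding $Cof$-membership question, then to an index $n(\alpha,\beta)$ of a group in the $(\bigoplus_\gamma x_\gamma)$-computable family, so that $G_{\bigoplus x,\,n(\alpha,\beta)}\cong G$ iff $x_\alpha\,E\,x_\beta$. But this gives groups indexed by \emph{pairs}, whereas a $\mu$-ary reduction must produce one group $g_\alpha(\xvec)$ per index $\alpha$ with $g_\alpha(\xvec)\cong g_\beta(\xvec)\iff x_\alpha\,E\,x_\beta$.

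To get from pair-data to single-index data, I would encode, into the group $g_\alpha(\xvec)$, enough information to recover $\alpha$'s $E$-class. Concretely, partition the primes into blocks $B_0,B_1,B_2,\ldots$, one block $B_\beta$ per index $\beta$, and let $g_\alpha(\xvec)$ be the subgroup of $\Q$ generated by $\{\tfrac1p : p\in B_\beta,\ \beta\le\text{(running stage)},\ x_\alpha\,E\,x_\beta\ \text{appears true}\}$; more precisely, enumerate into $g_\alpha(\xvec)$ the element $\tfrac1{p}$ for the $n$-th prime of block $B_\beta$ whenever the $\Sigma^0_3(\bigoplus x)$ computation witnessing $x_\alpha\,E\,x_\beta$ has produced its $n$-th ``confirmation'' (using the standard $\Sigma^0_3$-approximation so that infinitely many confirmations appear iff the statement is true). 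Then the set of primes dividing $1$ in $g_\alpha(\xvec)$ is exactly $\bigcup\{B_\beta : x_\alpha\,E\,x_\beta\} = \bigcup\{B_\beta : \beta\in[\alpha]_E\}$, which depends only on the $E$-class of $x_\alpha$; hence $g_\alpha(\xvec)\cong g_\beta(\xvec)$ iff $[\alpha]_E=[\beta]_E$ iff $x_\alpha\,E\,x_\beta$, using reflexivity and symmetry of $E$ to ensure $\alpha\in[\alpha]_E$ so that at least one full block is (or is not) present on each side consistently. Everything is computable in $\xvec$, uniformly in the fixed $\Sigma^0_3$ index for $E$, so $g$ is a computable countable reduction and $E\le^\omega_0\TFAb_1$.

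\textbf{Main obstacle.} The delicate point is the bookkeeping that makes the set of primes dividing $1$ in $g_\alpha(\xvec)$ depend \emph{only} on the class $[\alpha]_E$ and not on, say, the speed of convergence or on which $\beta$'s are enumerated first — and simultaneously keep each $g_\alpha(\xvec)$ a legitimate rank-$1$ group with a computable ($\xvec$-computable) atomic diagram on universe $\omega$. This requires care with the $\Sigma^0_3$ approximation (one must use that a $\Sigma^0_3$ statement $\exists n\,\forall m\,\exists k\,R$ has an approximation producing infinitely many confirmations iff true, and finitely many iff false, uniformly) and with ensuring transitivity of $E$ is respected — if $x_\alpha\,E\,x_\beta$ and $x_\beta\,E\,x_\gamma$ both get confirmed, we need block $B_\gamma$ to end up in $g_\alpha$ too; this is automatic since $x_\alpha\,E\,x_\gamma$ is then also true and will itself be confirmed infinitely often. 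I expect the rest — verifying $g_S\cong g_{S'}\iff S=S'$ for these subgroups of $\Q$, and that the construction is uniform from the single index — to be routine given the machinery already in the paper.
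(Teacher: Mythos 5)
There is a genuine gap, and it sits exactly where you wave it off as bookkeeping. First, the approximation you invoke does not exist: there is no uniform ``$\Sigma^0_3$-approximation producing infinitely many confirmations iff the statement is true'' --- that property (infinitely many expansionary stages iff true) characterizes $\Pi^0_2$ predicates relative to the oracle, and a $\Sigma^0_3(\bigoplus_\gamma x_\gamma)$ relation is not in general $\Pi^0_2(\bigoplus_\gamma x_\gamma)$. The correct normal form, and the one the paper uses, is: $x_\alpha\,E\,x_\beta$ iff \emph{some column} $k$ (i.e., the $\Pi^0_2$ statement $\exists x\le k\,\forall y\,\exists z\,R$) receives infinitely many confirmations. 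With that form, false columns unavoidably dribble finitely many ``junk'' confirmations, and you cannot tell in advance which column will be the true witness. Second, and consequently, your key claim --- that the set of primes dividing $1$ in $g_\alpha(\xvec)$ is exactly $\bigcup\{B_\beta:\beta\in[\alpha]_E\}$ and hence depends only on the $E$-class of $x_\alpha$ --- fails. Each false pair $(\alpha,\beta)$ still puts finitely many primes of $B_\beta$ into $g_\alpha$, and these junk contributions are computed from \emph{different} $\Sigma^0_3$ statements for different representatives: for $E$-equivalent $\alpha,\alpha'$ the approximations to $x_\alpha\,E\,x_\beta$ and $x_{\alpha'}\,E\,x_\beta$ confirm different finite amounts. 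Summed over infinitely many $\beta$ (and over the $k$-columns), the characteristics of $g_\alpha$ and $g_{\alpha'}$ can then disagree at infinitely many primes, and for rank-$1$ groups that forces $g_\alpha\not\cong g_{\alpha'}$ even though $x_\alpha\,E\,x_{\alpha'}$. (Relatedly, your opening claim that $G_S\cong G_{S'}$ iff $S=S'$ is wrong: for these groups isomorphism holds iff $S$ and $S'$ have finite symmetric difference, i.e., this coding reduces $E_0$, not equality, to $\TFAb_1$. Finite disagreements per block are harmless; the danger is precisely the infinite accumulation of finite disagreements, which your argument never controls.)

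This is not a side issue but the heart of the theorem, and it is what the paper's construction is built to solve: primes are indexed by triples $(m,n,k)$, \emph{every} group $G_l$ (not just $G_m,G_n$) is forced to take a position on each such prime, and a synchronization protocol (the $\prec$-order, chip versus non-chip stages, copying the position of the $\prec$-predecessor certified by the most recent small chip) guarantees that whenever $A_m\,E\,A_n$, the groups $G_m$ and $G_n$ end up agreeing on all but finitely many prime powers, while a genuine inequivalence produces infinitely many primes where $G_n$ is permanently one power ahead of $G_m$. Your proposal contains no mechanism playing this role, so the sentence ``I expect the rest to be routine'' conceals exactly the part of the proof that is not. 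To repair your approach you would have to replace the per-index blocks $B_\beta$ with pair-indexed coding and add a synchronization scheme ensuring almost-everywhere agreement of characteristics across each $E$-class --- at which point you have essentially reconstructed the paper's argument.
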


\begin{pf}
We first observe that $\tfa_1$ is defined by the following computable infinitary
$\Sigma^0_3$ formula on a subset of $2^\omega$: For $G,H\in \tfa_1$, $G\cong H$ if and only if
$$ \exists g\in G, h\in H, \forall q\in \Q [(\exists g'\in G~g' = qg) \Leftrightarrow (\exists h'\in H~h' = qh)].$$

Let $E$ be a $\Sigma^0_3$ equivalence relation on a subset of $2^\omega$. We may assume that
 $AEB$ if and only if $\exists x \forall y \exists z R(A,B,x,y,z)$ where $R(A,B,x,y,z)$ is a computable predicate.
 
Recall that our reduction requires a Turing functional $\Phi$ that accepts as an oracle
the join $A_0\oplus A_1\oplus\cdots$ of countably many sets in $2^\omega$ and 
(assuming every $A_n$ is in the field of the equivalence relation $E$)
outputs the join $G_0\oplus G_1\oplus\cdots$ of the atomic
diagrams of countably many groups in $\TFAb_1$, so that
$$ (\forall n<m)~[A_m E A_n \iff G_m\cong G_n].$$

In the construction, we will consider each $G_i$ as a subgroup of $\Q$. In fact, $\Phi$ will first build subgroups $G_i$ of $\Q$, and then turn each of them into its atomic diagram. 

Notice that for a given $m,n,k\in\omega$, the property $\exists x\le k\ \forall y\ \exists z\ R(A_m,A_n,x,y,z)$
is uniformly $\Pi^0_2$.
Thus, we can define chip functions $c_{m,n}$, uniformly for all $m<n$, that award infinitely many chips to $(m,n,k)$
just if $\exists x\le k\ \forall y\ \exists z\ R(A_m,A_n,x,y,z)$, i.e., if there is some $x\le k$ witnessing $A_m E A_n$.
(Saying that $c_{m,n}$ \emph{awards a chip to $(m,n,k)$ at stage $t$} means that $c_{m,n}(m,n,t) = k$.)
We will arrange these chip functions so that each $c_{m,n}$ has domain $\set{\la m,n,t\ra}{t\in\omega}$
and image $\omega$, with each element of $\omega$ lying in the domain of exactly one chip function.
Finally, for convenience, we define $c_{n,m}=c_{m,n}$ whenever $m<n$,
taking advantage of the symmetric nature of $E$.

We now index the set of all prime numbers as $\set{p_{m,n,k}}{(m,n,k)\in\omega^3~\&~m<n}$.
In our construction, we aim to achieve the following:
\begin{itemize}
\item If $(m,n,k)$ receives infinitely many chips, then $1$ is infinitely divisible
by $p_{m,n,k}$ in both $G_m$ and $G_n$.
\item if $(m,n,k)$ receives only finitely many chips, then 
$$(\exists r\in\omega)
\left[\frac1{p_{m,n,k}^{r-1}}\in G_m~\&~\frac1{p_{m,n,k}^r}\notin G_m~\&~\frac1{p_{m,n,k}^r}\in G_n~\&~\frac1{p_{m,n,k}^{r+1}}\notin G_n\right].$$
\end{itemize}

We give the procedure for each triple $(m,n,k)$ individually,
as there is no interaction between any two such procedures, although the
procedure here does involve other chip functions besides $c_{m,n}$.
Fix $(m,n,k)$ with $m<n$, and write $p=p_{m,n,k}$ for simplicity,
and fix $N=n+k$.  Every group $G_l$ contains $1$, hence contains $\Z$.
The entire purpose of this procedure is to determine,
for every group $G_l$ (not just $G_m$ and $G_n$), which negative
powers of $p$ lie in $G_l$.  We will do this in such a way that, if $c_{m,n}$ awards finitely many chips to $(m,n,k)$, then some power $p^{-r}$ with $r>0$ will lie in $G_n$ but not in $G_m$;
moreover, every $G_l$ will contain $p^{-(r-1)}$ and none will contain $p^{-(r+1)}$.
On the other hand, if $c_{m,n}$ awards infinitely many chips to $(m,n,k)$, then every $G_l$ will contain
every negative power of $p$.  Thus this procedure makes sure that $G_m$ will be isomorphic to $G_n$ if and only if there is some $x$ witnessing $A_m E A_n$, but it will also
do right by those $G_l$ with $l\leq N$ and $l\notin\{m,n\}$, as described further down.

At stage $0$, we define $p^{-1}$ to lie in $G_{n,0}$.  No negative power of $p$
lies in any other $G_{l,0}$, including $l=m$.  (Every $G_{l,0}$ contains $1$, however.)

A stage $s+1$ with $c_{m,n}(s)=k$ is called a \emph{chip stage} for $p_{m,n,k}$.
There is always a (least) $r>0$ with $p^{-r}\notin G_{m,s}$:  this $r$ is the \emph{key exponent}
for $p_{m,n,k}$ at stage $s$.  Every $G_{l,s}$ will contain $p^{-(r-1)}$.
The power $p^{-r}$ will lie in $G_{n,s}$ and many other $G_{l,s}$, 
but not $G_{m,s}$. No $G_{l,s}$
will contain $p^{-(r+1)}$.  We define the following linear order on the numbers $\leq N$:
$$ m \prec n \prec 0\prec 1\prec\cdots\prec m-1\prec m+1\prec\cdots\prec n-1\prec n+1\prec\cdots\prec N-1\prec N$$
and follow these instructions at stage $s+1$:
\begin{itemize}
\item
$p^{-r}$ enters every $G_{l,s+1}$, and $p^{-(r+1)}$ enters $G_{n,s+1}$ but not $G_{m,s+1}$.
\item
For every $l>N$, $p^{-(r+1)}$ enters $G_{l,s+1}$.
\item
For each $l\leq N$ with $l\neq m$ and $l\neq n$, we find the greatest $t_l\leq s$
(if any exists) such that $(\exists j\prec l)c_{j,l}(t_l)\leq N$.  We then proceed through the $\prec$ ordering.
Already $p^{-(r+1)}\in G_{n,s+1}$ but $\notin G_{m,s+1}$.  For each subsequent $l$
in turn (under $\prec$), enumerate $p^{-(r+1)}$ into $G_{l,s+1}$ if and only if $p^{-(r+1)}$ was already
enumerated into $G_{j,s+1}$, where $c_{j,l}(t_l)\leq N$.  (If it exists, this $j$ is unique,
because $t_l$ lies in the domain of only one chip function $c_{j,l}$.)  If there was
no such stage $t_l$, leave $p^{-(r+1)}$ out of $G_{l,s+1}$.
\end{itemize}
Thus every $G_{l,s+1}$ now contains $p^{-r}$, but none contains $p^{-(r+2)}$.
At the next stage, $(r+1)$ will have replaced $r$ as the key exponent for this $p$.
This completes the instructions when $c_{m,n}(s)=k$ --- but our procedure also has instructions
to follow at the \emph{non-chip stages}, i.e, those $s+1$ such that $c_{m,n}(s)\neq k$.

At these non-chip stages $s+1$, again we have a key exponent $r>0$ for $p$
at stage $s$, with $p^{-r}\notin G_{m,s}$.
For this $r$, $p^{-r}$ lies in $G_{n,s}$ and also in many other $G_{l,s}$, but none of these
contains $p^{-(r+1)}$.  We use the same order $\prec$ as in the preceding paragraph,
define $t_l$ the same way for each $l\leq N$ except for $m$ and $n$ themselves,
and check whether
$$(\exists j\leq N)(\exists l\leq N)~[j\prec l~\&~l\neq n~\&~
t_l\in\dom{c_{j,l}}~\&~((p^{-r}\notin G_{j,s}~\&~p^{-r}\in G_{l,s})\vee(p^{-r}\notin G_{l,s}~\&~p^{-r}\in G_{j,s}) )].$$
(Notice that neither $m$ nor $n$ can serve as $l$ here, since $j\prec l$ and $l\neq n$.)
If there are no such $j$ and $l$, then we change nothing at this stage, because every
$G_{l,s}$ already is equal to the preceding $G_{j,s}$ for which it seems most likely that
$A_l E A_j$.  However, if such $j$ and $l$ exist, then we repeat the procedure from above:
\begin{itemize}
\item
$p^{-r}$ enters every $G_{l,s+1}$, and $p^{-(r+1)}$ enters $G_{n,s+1}$ but not $G_{m,s+1}$.
\item
For every $l>N$, $p^{-(r+1)}$ enters $G_{l,s+1}$.
\item
For each $l\leq N$ with $l\neq m$ and $l\neq n$, we
proceed through the $\prec$ ordering.
Already $p^{-(r+1)}\in G_{n,s+1}$ but $\notin G_{m,s+1}$.  For each subsequent $l$
in turn, enumerate $p^{-(r+1)}$ into $G_{l,s+1}$ if and only if $p^{-(r+1)}$ was already
enumerated into $G_{j,s+1}$, where $c_{j,l}(t_l)\leq N$.  
If there was no such stage $t_l$, leave $p^{-(r+1)}$ out of $G_{l,s+1}$.
\end{itemize}
In this case, every $G_{l,s+1}$ now contains $p^{-r}$, but none contains $p^{-(r+2)}$.
This completes this stage, and the construction.

Of course, each $G_l$ is the additive subgroup of $\Q$ generated by $\cup_s G_{l,s}$,
understanding that each $G_{l,s}$ will contain powers of many different primes
$p_{m,n,k}$, since we run the construction above for all triples $(m,n,k)$ with $m<n$.
We now prove the relevant facts about the construction.
\begin{lemma}
\label{lemma:iso}
Fix any $(m,n,k)$.  If $c_{m,n}^{-1}(k)$ is infinite (so $k$ received infinitely many chips
from $c_{m,n}$), then $1$ is divisible by every power of $p_{m,n,k}$ in every $G_l$.
\end{lemma}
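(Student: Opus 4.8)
The plan is to observe first that the fate of the negative powers of the single prime $p = p_{m,n,k}$ — across all of the groups $G_l$ — is governed entirely by the sub-procedure attached to the triple $(m,n,k)$; every other sub-procedure handles a different prime and never enumerates a negative power of $p$ into any $G_l$. So I would begin by isolating that sub-procedure and ignoring everything else.

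Next, I would track the \emph{key exponent} $r_s$ for $p$ at stage $s$, i.e., the least $r>0$ with $p^{-r}\notin G_{m,s}$. One checks $r_0 = 1$: at stage $0$ only $G_{n,0}$ receives $p^{-1}$, and $m\neq n$, so $p^{-1}\notin G_{m,0}$. I would then show that $(r_s)_s$ is nondecreasing and that, whenever it changes, it increases by exactly $1$: the only stages at which the sub-procedure enumerates anything are the chip stages for $p$ and those non-chip stages at which the relevant existential search succeeds, and at each such stage the instructions put $p^{-r_s}$ into every $G_{l,s+1}$ (in particular into $G_m$, which lacked it) while keeping $p^{-(r_s+1)}$ out of $G_{m,s+1}$. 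Combined with the fact that $p^{-1},\dots,p^{-(r_s-1)}$ were already in $G_{m,s}$, this pins down $r_{s+1}=r_s+1$.

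I would then invoke the hypothesis: $c_{m,n}^{-1}(k)$ infinite means there are infinitely many chip stages for $p$, and $r_s$ strictly increases at each of them, so $r_s\to\infty$. Since $r_0 = 1$ and the jumps are by $1$, the sequence $(r_s)_s$ realizes every positive integer. For each $j\geq 1$, take the stage $s+1$ at which $r_s = j$ jumps to $j+1$; the instruction that $p^{-r_s}$ enters every $G_{l,s+1}$ then places $p^{-j}$ into every $G_l$, which is exactly the assertion that $1$ is divisible by $p^j$ in every $G_l$. Letting $j$ range over $\omega$ gives the lemma.

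I do not expect a real obstacle, since the content is pure bookkeeping about the construction; the one point to handle carefully is verifying that $p^{-(r_s+1)}$ is never enumerated into $G_m$ at the stage where $r_s$ increments, so that the key exponent increases by exactly one and not more. This forces a close reading of the instructions for the non-chip stages at which a change occurs, which propagate $p^{-(r+1)}$ into various $G_l$ with $l\leq N$ along the order $\prec$ — but one checks that the index $m$ is never among those $l$, so $p^{-(r_s+1)}\notin G_{m,s+1}$ at every such stage.
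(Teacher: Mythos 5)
Your proposal is correct and follows essentially the same line as the paper's (much terser) proof: at each chip stage the current key power $p^{-r}$ is enumerated into every $G_l$, and infinitely many chip stages force the key exponent through all positive integers, so every power of $p_{m,n,k}$ eventually divides $1$ in every $G_l$. Your extra care in checking that $p^{-(r+1)}$ never enters $G_m$ at an active stage (so the key exponent rises by exactly one and no power is skipped) is just a more explicit rendering of the bookkeeping the paper leaves implicit.
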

We will write ``$p_{m,n,k}^{-\infty}\in G_l$'' to denote that every power $p_{m,n,k}^{-r}$
lies in $G_l$.  Of course this is just shorthand:  there is no actual element $p_{m,n,k}^{-\infty}$
(and there is no finite stage $s$ by which all of these powers have entered $G_{l,s}$).
\begin{pf}
Every time $k$ received a chip from $c_{m,n}$, we adjoined new powers of $p_{m,n,k}$
to both $G_m$ and $G_n$, and ensured that the power in $G_m$ also lies in every $G_l$ at that stage.\qed
\end{pf}

Notice, however, that even if $k$ received only finitely many chips from $c_{m,n}$,
it is still conceivable that all powers of $p_{m,n,k}$ lie in every $G_l$ because the second part of the procedure was activated infinitely often.  This could occur if there
exist $l\in\omega$ and $k_0, k_1<N=n+k$ such that $c_{m,l}^{-1}(k_0)$
and $c_{n,l}^{-1}(k_1)$ are both infinite.  In this case, the second part of the procedure
may have been forced to increase the power $r$ with $p^{-r}\in G_{m,s+1}$ at infinitely
many stages $s$, as a new chip for $k_0$ from $c_{m,l}$ appeared, followed by a new chip
for $k_1$ from $c_{n,l}$, and then $k_0$ again, and so on.  However, in this case,
$k_0$ and $k_1$ guarantee (respectively) that $A_m E A_l$ and $A_l E A_n$,
so in fact $A_m E A_n$ in this case.  So, while $k$ itself may have received only
finitely many chips from $c_{m,n}$, some other $k'>k$ must have received infinitely many.
We now state this possibility formally.
\begin{lemma}
\label{lemma:inf}
The following are equivalent, for each $m<n$ and each $k$.
\begin{enumerate}
\item
For some $l$, $p_{m,n,k}^{-\infty}\in G_l$.
\item
For every $l$, $p_{m,n,k}^{-\infty}\in G_l$.
\end{enumerate}
Moreover, if these hold, then $A_m E A_n$.
\end{lemma}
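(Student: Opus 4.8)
The plan is to prove Lemma~\ref{lemma:inf} by analyzing, for a fixed prime $p=p_{m,n,k}$, exactly how the key exponent for $p$ can grow. First I would establish the equivalence $(1)\iff(2)$. The direction $(2)\Rightarrow(1)$ is trivial. For $(1)\Rightarrow(2)$, I would observe that at \emph{every} stage $s+1$ at which the key exponent for $p$ increases (whether a chip stage or a non-chip stage), the instructions \emph{uniformly} enumerate the previous power $p^{-r}$ into \emph{every} $G_{l,s+1}$. Hence the exponent $r_l(s)$ with $p^{-r_l(s)}\in G_{l,s}$ but $p^{-(r_l(s)+1)}\notin G_{l,s}$ satisfies $r_l(s)\ge r_m(s)$ for all $l$ and all $s$ (using that $p^{-r}\notin G_{m,s}$ defines the key exponent $r=r_m(s)+1$, so $r_m(s)$ is one less than the key exponent, and $p^{-r_m(s)}$ has been placed everywhere). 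So if the key exponent for $p$ grows without bound (which is exactly what ``$p^{-\infty}\in G_l$ for some $l$'' forces, via the same uniform-enumeration observation applied in reverse: a single $G_l$ can only accumulate unboundedly many negative powers of $p$ if the key exponent itself is unbounded), then every $G_l$ receives every negative power of $p$. I should be slightly careful here: I need that $p^{-\infty}\in G_l$ implies the key exponent is unbounded, which holds because the only way any $p^{-r}$ enters any group is through one of these exponent-increasing stages, and each such stage increases the key exponent by exactly one.

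Next I would prove the ``Moreover'' clause: if the key exponent for $p=p_{m,n,k}$ is unbounded, then $A_m \mathrel{E} A_n$. The key exponent increases only at (a) chip stages for $p_{m,n,k}$, i.e.\ stages $s+1$ with $c_{m,n}(s)=k$, or (b) non-chip stages where the displayed existential condition on some $j\prec l$ (with $l\le N$, $l\ne n$) holds. If infinitely many increases are of type (a), then $c_{m,n}^{-1}(k)$ is infinite, so by the design of the chip function $c_{m,n}$ there is some $x\le k$ with $\forall y\,\exists z\, R(A_m,A_n,x,y,z)$, hence $A_m\mathrel{E}A_n$ directly. So I may assume only finitely many increases are of type (a), hence infinitely many are of type (b). Now I would run a finitary pigeonhole over the pairs $(j,l)$ with $j\prec l\le N$: at each type-(b) stage the witnessing $(j,l)$ has $t_l$ lying in the domain of the (unique) chip function $c_{j,l}$, and the condition says $p^{-r}$ lies in exactly one of $G_{j,s},G_{l,s}$. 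The only mechanism that later forces $p^{-(r+1)}$ into $G_l$ matched with $G_j$ uses $c_{j,l}(t_l)\le N$, and for the ``mismatch'' to recur infinitely often some fixed pair $(j,l)$ (or a short $\prec$-chain of such pairs terminating at $m$ and at $n$) must be responsible infinitely often, forcing the corresponding $t_l$'s to be unbounded and hence $c_{j,l}^{-1}(k')$ infinite for the relevant value $k'=c_{j,l}(t_l)\le N$. As explained in the paragraph preceding the lemma statement, an infinite fiber of $c_{j,l}$ yields $A_j\mathrel{E}A_l$; chaining these equivalences from $m$ up through the $\prec$-order to $l$ and separately to $n$ (both of which sit at the bottom of $\prec$) and using transitivity and symmetry of $E$ gives $A_m\mathrel{E}A_n$.

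The main obstacle I expect is making the pigeonhole in the second paragraph precise: the non-chip instructions propagate divisibility through the $\prec$-ordering in a way that couples many of the $G_l$ together, so I must argue that unboundedly many type-(b) increases cannot happen ``by accident'' but must be traceable to a genuine infinite chip fiber $c_{j,l}^{-1}(k')$ with $k'\le N$. Concretely, I would set this up as follows: let $P$ be the finite set of pairs $(j,l)$ with $j\prec l$, $j,l\le N$; each type-(b) stage is charged to at least one $(j,l)\in P$ together with a value $k' = c_{j,l}(t_l)\le N$ (finitely many choices); by pigeonhole some $(j,l,k')$ is charged infinitely often; since $t_l\in\dom{c_{j,l}}$ and distinct type-(b) stages charged to $(j,l)$ must use distinct $t_l$ (as $t_l$ is the \emph{greatest} index $\le s$ with the relevant property, it is non-decreasing in $s$ and cannot stabilize if the condition keeps being triggered with growing $s$), the value $k'$ is awarded chips at infinitely many stages, i.e.\ $c_{j,l}^{-1}(k')$ is infinite, giving $A_j\mathrel{E}A_l$. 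Finally I would note $m$ and $n$ both appear at the bottom of $\prec$, so any pair $(j,l)\in P$ that is ever relevant has its information eventually propagated to both $G_m$ and $G_n$ along $\prec$, and a short induction along the $\prec$-chain (using that each propagation step is justified by an infinite chip fiber hence an $E$-relation) closes the argument. I would remark that the case analysis also covers the degenerate possibility that the relevant chain has length zero, namely $\{j,l\}=\{m,n\}$, which is exactly the type-(a) situation already handled.
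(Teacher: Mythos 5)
Your equivalence $(1)\iff(2)$ is correct and runs along the same lines as the paper's: every exponent-increasing stage inserts $p^{-r}$ into every $G_{l,s+1}$ simultaneously, so at all stages every group is within one power of $p$ of every other, and if one accumulates all negative powers so do they all.

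For the ``Moreover'' clause, your setup is a genuinely different route (direct rather than the paper's contrapositive), and the pigeonhole is a good observation: you correctly identify that the key exponent grows only at type-(a) chip stages and type-(b) non-chip trigger stages, that type-(a) infinitely often gives $A_m E A_n$ outright, and that infinitely many type-(b) stages charged to a fixed $(j^*,l^*,k^*)$ force $t_{l^*}$ to take infinitely many distinct values, each lying in $c_{j^*,l^*}^{-1}(k^*)$ with $k^*\le N$, whence $A_{j^*} E A_{l^*}$. The argument that $t_{l^*}$ cannot stabilize if $(j^*,l^*)$ keeps witnessing is also right, though it relies on the observation that between trigger stages $G_{l^*}$ is re-set at every re-alignment to agree with its $t_{l^*}$-determined partner, so a stable partner forces permanent agreement.

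The gap is in the final chaining step, and it is not a small one. The pigeonhole delivers one equivalence $A_{j^*} E A_{l^*}$ for a pair with $j^*\prec l^*\le N$ and $l^*\ne n$; that single relation does not give $A_m E A_n$. To conclude, you would need to show that the $\prec$-chains linking $j^*$ back to $m$ and $l^*$ back to $n$ each consist of genuine $E$-related pairs, so that transitivity closes the circuit. But each link in those chains is witnessed by a $t$-value pointing to some chip $c_{j',j''}(t)\le N$, and nothing in your argument rules out that some of these are ``false chips'' (chips for $k'$ with $c_{j',j''}^{-1}(k')$ finite) awarded early in the construction. A link based on a false chip does not encode an $E$-relation, and if $t_{j''}$ never increases past the false chip, the link stays fixed forever. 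Your remark that ``each propagation step is justified by an infinite chip fiber'' is exactly what needs to be proved, and it is the crux: the paper handles it by arguing contrapositively, fixing a stage $s_0$ after which no false chips with value $\le N$ are given for pairs $\le N$, then choosing stages $s_1<s_2<\cdots<s_N$ marking full ``rounds'' of true chips and running an induction along the $\prec$-order showing that each $j_i$ settles permanently on the $m$-side or the $n$-side by stage $s_{i+1}$. Without some analogue of that analysis, the phrase ``a short induction along the $\prec$-chain'' hides the real content of the lemma.
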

\begin{pf}
(2) trivially implies (1), and (1)$\implies$(2) is clear from the construction for $(m,n,k)$:
at every stage $s$, $G_{n,s}$ is ``ahead of'' $G_{m,s}$ by one power of $p=p_{m,n,k}$
(meaning that there is some $r>0$ with $p^{-r}\in G_{n,s}-G_{m,s}$, while
$p^{-(r-1)}\in G_{m,s}$ and $p^{-(r+1)}\notin G_{n,s}$).  Moreover, for every other $l$,
$G_{l,s}$ is either ``behind'' with respect to powers of this $p$, i.e.,
$G_{l,s}\cap\set{p^q}{q>0}=G_{m,s}\cap\set{p^q}{q>0}$, or else ``ahead''
with respect to those powers, i.e.,
$G_{l,s}\cap\set{p^q}{q>0}=G_{n,s}\cap\set{p^q}{q>0}$.  So, if any $l$ at all
has $p_{m,n,k}^{-\infty}\in G_l$, then so do $G_n$ and $G_m$ and every other $G_j$.

Suppose now that $A_m\centernot{E}A_n$.
Say that a chip $c_{m,n}(s)$ is \emph{false} if $c_{m,n}^{-1}(c_{m,n}(s))$ is finite.
(That is, this chip has the potential to mislead us.)  Fixing $m$, $n$, $k$, and $N=n+k$,
we see that there is some stage $s_0$ such that no $c_{j,l}$ with $j < l \leq N$
ever gives a false chip $c_{j,l}(s)$ to some $k' \le N$ at any stage $s\geq s_0$, because there can only
be finitely many such false chips given at all. 
Now let $s_1>s_0$ be a stage such that,
for each $j<l \leq N$ and each $i\leq N$ with $c_{j,l}^{-1}(i)$ infinite, there is some
$s$ between $s_0$ and $s_1$ with $c_{j,l}(s)=i$.  Let $s_2>s_1$ be so large that
this has happened again between $s_1$ and $s_2$, and define $s_3<s_4<\cdots$
similarly.  (One might say that the chip functions have dealt out a full round of true chips
between each $s_q$ and $s_{q+1}$.)

Since $A_m\centernot E A_n$, we know that $c_{m,n}(s)>N$ (if $c_{m,n}(s)$ is defined)
for every $s\geq s_0$.  Thus all subsequent stages in the procedure for $(m,n,k)$
are non-chip stages.  We consider $l=0$ first (assuming $m\neq 0$).  If any
$c_{0,m}^{-1}(i)$ with $i\leq N$ is infinite, then $c_{0,m}(s)=i$ for some $s$
between $s_0$ and $s_1$.  But then $A_0E A_m$, so $A_0\centernot E A_n$.
It follows that every $i'$ is a false chip for $c_{0,n}$, and thus $c_{0,n}(s)>N$ for every $s\geq s_0$.
Therefore, by stage $s_1$ we will have $p_{m,n,k}^{-(r+1)}\notin G_{m,s_1}\cup G_{0,s_1}$,
according to the construction at the non-chip stage $s+1$ at which $c_{0,m}(s)=i$,
and $G_{0,s}$ will stay even with $G_{m,s}$ forever after (in terms of powers of $p_{m,n,k}$),
because $c_{0,n}(s)>N$ for every $s\geq s_0$, as shown above.

A similar argument shows that if any $c_{0,n}^{-1}(i)$ with $i\leq N$ is infinite,
then $p_{m,n,k}^{-(r+1)}\in G_{n,s_1}\cap G_{0,s_1}$, and $G_{0,s}$ will stay
even with $G_{n,s}$ forever after (in terms of powers of $p_{m,n,k}$).  And if
no $c_{m,0}^{-1}(i)$ nor any $c_{0,n}^{-1}(i)$ with $i\leq N$ is infinite, then
both $c_{0,m}(s)>N$ and $c_{0,n}(s)>N$ for every $s\geq s_0$, in which case
either $G_{0,s}$ will stay even forever with $G_{m,s}$ (in case $G_{m,s_0}=G_{0,s_0}$),
or else it will stay even forever with $G_{n,s}$ (since then $G_{n,s_0}=G_{0,s_0}$).

Finally, notice that if $m=0$, then this same argument would hold with $1$ in place of $0$
(or with $2$ in place of $0$, in case $n=1$).  It really shows that by stage $s_1$, the next element $j_0$
after $n$ in the $\prec$-order must have linked its $G_{j_0}$ either to $G_m$ or to $G_n$ permanently
(as far as powers of $p_{m,n,k}$ are concerned).

But now the same argument applies to the subsequent element $j_1$ in the $\prec$-order,
between the stages $s_1$ and $s_2$.  Once $j_0$ has ``settled down'' this way, $j_1$
will either select (by stage $s_2$) which of $m$, $n$ and $j_0$ to link to, or else it
will never link to any of them but will keep the same position that it holds
at stage $s_1$.  In any case, $j_1$ never changes its position after stage $s_2$.
Continuing by induction, we see that after stage $s_N$, the final element $j_{N-1}$
in the $\prec$-order $m\prec n\prec j_0\prec\cdots\prec j_{N-1}$ on $\{0,1,\ldots,N\}$
will have never change its position again.  Thus, from stage $s_N$ on, no new powers
of $p_{m,n,k}$ are ever added to $G_m$ or $G_n$, or to any other $G_l$.
This proves the final claim of the lemma.
\qed\end{pf}

\begin{lemma}
\label{lemma:noniso}
If $A_m \centernot E A_n$, then $G_m\not\cong G_n$.
\end{lemma}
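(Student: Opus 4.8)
The plan is to extract, from the way the primes $p_{m,n,k}$ are used, a disagreement between the characteristics of $G_m$ and $G_n$ at infinitely many primes. Recall that a subgroup $G\subseteq\Q$ with $1\in G$ is determined by the function $\chi_G$ sending a prime $p$ to $\sup\set{r\geq 0}{p^{-r}\in G}\in\omega\cup\{\infty\}$, and that by Baer's classification of rank-$1$ torsion-free abelian groups two such groups are isomorphic exactly when $\chi_G$ and $\chi_H$ agree at all but finitely many primes and, wherever they disagree, both values are finite. So it is enough to exhibit infinitely many primes at which $\chi_{G_m}$ and $\chi_{G_n}$ differ.

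First I would record the invariant that the construction keeps for the powers of a single prime $p=p_{m,n,k}$. Because the procedures for distinct triples do not interact, only the procedure for $(m,n,k)$ ever adjoins negative powers of $p$ to any $G_l$. A routine induction on stages then shows that at every stage $s$, writing $r>0$ for the key exponent of $p$ at stage $s$, we have $G_{m,s}\cap\set{p^{-j}}{j\geq 0}=\set{p^{-j}}{0\leq j\leq r-1}$, while $p^{-r}\in G_{n,s}$ and no $G_{l,s}$ contains $p^{-(r+1)}$. Indeed, at stage $0$ the key exponent is $1$ and $1=p^{0}$ is the only power of $p$ in any $G_{l,0}$; and whenever the key exponent is raised --- at a chip stage, or at a non-chip stage meeting the activation condition --- the step puts $p^{-r}$ into every $G_l$ and $p^{-(r+1)}$ into $G_n$ but not $G_m$, which is precisely what preserves the invariant with $r+1$ as the new key exponent. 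At all remaining non-chip stages the powers of $p$ are left untouched.

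Now suppose $A_m\centernot E A_n$. By Lemma \ref{lemma:inf}, $p_{m,n,k}^{-\infty}\notin G_l$ for every $l$ and every $k$; in particular $a:=\chi_{G_m}(p_{m,n,k})$ is finite. Since $p^{-a}$ belongs to $G_m$, which is the increasing union of the $G_{m,s}$, we have $p^{-a}\in G_{m,s_0}$ for some $s_0$, so the invariant forces the key exponent $r_{s_0}$ at that stage to satisfy $r_{s_0}\geq a+1$. Then $p^{-r_{s_0}}\in G_{n,s_0}\subseteq G_n$, so $\chi_{G_n}(p_{m,n,k})\geq r_{s_0}\geq a+1>a=\chi_{G_m}(p_{m,n,k})$. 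Hence $\chi_{G_m}$ and $\chi_{G_n}$ disagree at $p_{m,n,k}$, for every $k\in\omega$. Since $\set{p_{m,n,k}}{k\in\omega}$ is infinite, the characteristics of $G_m$ and $G_n$ differ at infinitely many primes, and therefore $G_m\not\cong G_n$.

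The step I expect to require the most care is pinning down the invariant: one must check that it survives every stage, in particular that a non-chip stage either changes nothing about the powers of $p_{m,n,k}$ or executes exactly the same incrementing step as a chip stage, and that the procedures attached to other triples never disturb powers of $p_{m,n,k}$. With the invariant established, the conclusion is immediate from Lemma \ref{lemma:inf} and Baer's classification.
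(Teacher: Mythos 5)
Your proof is correct and follows essentially the same route as the paper's: invoke Lemma \ref{lemma:inf} to get finiteness of the powers of each $p_{m,n,k}$, use the construction's invariant that $G_{n,s}$ stays exactly one power of $p_{m,n,k}$ ahead of $G_{m,s}$ to conclude $\chi_{G_n}(p_{m,n,k})>\chi_{G_m}(p_{m,n,k})$ for every $k$, and then deduce non-isomorphism from disagreement at infinitely many primes. You are merely more explicit than the paper about verifying the stage-by-stage invariant and about citing Baer's classification, which is a fine elaboration rather than a different argument.
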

\begin{pf}
By hypothesis, for every $k\in\omega$, $c_{m,n}^{-1}(k)$ is finite.
Lemma \ref{lemma:inf} shows that there are only finitely many powers
of each $p_{m,n,k}$ in each $G_l$, so (for a single fixed $k$)
let $r$ be maximal with $p_{m,n,k}^{-r}\in G_m$,
and fix the stage $s+1$ at which $p_{m,n,k}^{-r}$ was adjoined to $G_{m,s+1}$.
By the construction, we have $p_{m,n,k}^{-(r+1)}\in G_{n,s+1}$.  Thus,
for every~$k$, $1$ is divisible by $p_{m,n,k}^{(r+1)}$ in $G_n$
(for the $r$ corresponding to this $k$), but not in $G_m$.
It follows that $G_m\not\cong G_n$.
\qed\end{pf}

\begin{lemma}
\label{lemma:final}
If $A_m E A_n$, then $G_m\cong G_n$.
\end{lemma}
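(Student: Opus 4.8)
The plan is to reduce the whole statement to the classification of rank-$1$ torsion-free abelian groups by type (Baer). For a subgroup $G_l\subseteq\Q$ and a prime $p$, write $h_p(G_l)=\sup\set{r\geq 0}{p^{-r}\in G_l}\in\N\cup\{\infty\}$. Since every $G_l$ contains $1$, we have $G_m\cong G_n$ exactly when $h_p(G_m)=h_p(G_n)$ for all but finitely many primes $p$ and, at the finitely many exceptional primes, both values are finite. So the proof will consist of checking, prime by prime, that this condition holds. We may assume $m\neq n$, and after swapping we may assume $m<n$ as natural numbers; fix the least $x_0$ witnessing $\forall y\,\exists z\,R(A_m,A_n,x_0,y,z)$, so that $c_{m,n}$ awards infinitely many chips to $(m,n,k)$ precisely when $k\geq x_0$.

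First I would dispose of the easy primes. If $p=p_{m,n,k}$ with $k\geq x_0$, then Lemma~\ref{lemma:iso} gives $h_p(G_m)=h_p(G_n)=\infty$. For the finitely many $p=p_{m,n,k}$ with $k<x_0$, Lemma~\ref{lemma:inf} shows that either every power of $p$ lies in every $G_l$ (so both heights are $\infty$) or only finitely many do, in which case the construction keeps $G_{n,s}$ exactly one power of $p$ ahead of $G_{m,s}$, so both heights are finite; either way these finitely many primes do no harm. Now consider $p=p_{m',n',k'}$ with $(m',n')\neq(m,n)$, and put $N'=n'+k'$. There are only finitely many such triples with $N'<\max(m,n,x_0)$, and for those I would not bother computing heights: the $(m',n',k')$-procedure either diverges, making $h_p(G_l)=\infty$ for every $l$, or stabilizes, making every $h_p(G_l)$ finite — and finitely many primes with finite heights are exactly what the type criterion tolerates. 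So assume $N'\geq\max(m,n,x_0)$. If the $(m',n',k')$-procedure updates infinitely often, its key exponent is unbounded and $p^{-r}$ enters every $G_{l,s+1}$ at the update with key exponent $r$, so $h_p(G_l)=\infty$ for all $l$ and we are done.

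The remaining case — the $(m',n',k')$-procedure stabilizes, with $N'\geq\max(m,n,x_0)$ — is the heart of the argument. Let $S$ be a stage past the last update; after $S$ the key exponent is a fixed value, $G_{m'}$ and $G_{n'}$ have final $p$-heights differing by $1$ (say $r_0$ and $r_0+1$), and each $G_l$ with $l\leq N'$ has final height $r_0$ or $r_0+1$. For $l\leq N'$ with $l\notin\{m',n'\}$ put
$$J_l=\set{j\prec l}{c_{j,l}\text{ awards infinitely many chips whose value is}\leq N'},$$
and recall that $c_{j,l}$ does this exactly when $A_jEA_l$ with least witness $\leq N'$; in particular $A_jEA_l$ for every $j\in J_l$. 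The key claim I would prove is: \emph{if the procedure stabilizes, then for every such $l$ with $l\neq n'$ and every $j\in J_l$ one has $h_p(G_j)=h_p(G_l)$.} The idea is that otherwise $p^{-(r_0+1)}$ separates $G_j$ from $G_l$ at cofinally many stages at which the most recent chip $\leq N'$ awarded to $l$ comes from $c_{j,l}$, and at such a non-chip stage the update condition fires, contradicting stabilization. Making this precise is the main obstacle: because several $j\in J_l$ may compete to be the source of the ``most recent chip $\leq N'$ to $l$'', one has to argue by induction down $\prec$ that all of these $G_j$ share a single final height, so that $G_l$ cannot avoid disagreeing with one of them infinitely often unless it matches them all — a somewhat more delicate version of the stabilization bookkeeping already carried out in the proof of Lemma~\ref{lemma:inf}.

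Granting the claim, the conclusion is quick. Let $j$ and $\ell$ be the $\prec$-smaller and $\prec$-larger of the two indices I need among $\{m,n\}$: if $m,n\notin\{m',n'\}$ then $j=m$ and $\ell=n$; if $m=m'$ then $j=m'$ and $\ell=n$; if $m=n'$ then $j=n'$ and $\ell=n$ (the case $\{m,n\}=\{m',n'\}$ is excluded, since $(m',n')\neq(m,n)$ and $m<n$, $m'<n'$). In every case $\ell=n\neq n'$ and $\{G_j,G_\ell\}=\{G_m,G_n\}$, and $c_{j,\ell}$ is the chip function of the pair $\{m,n\}$, which awards infinitely many chips $\leq N'$ because $A_mEA_n$ with least witness $x_0\leq N'$; hence $j\in J_\ell$, so the claim gives $h_p(G_j)=h_p(G_\ell)$, i.e.\ $h_p(G_m)=h_p(G_n)$. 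Assembling the cases, $h_p(G_m)=h_p(G_n)$ at every prime except possibly the finitely many $p_{m,n,k}$ with $k<x_0$ and the finitely many $p_{m',n',k'}$ with $(m',n')\neq(m,n)$ and $n'+k'<\max(m,n,x_0)$, and at each of those both heights are finite; by Baer's classification, $G_m\cong G_n$. The step I expect to fight with is the emphasized claim about stabilized procedures — everything else is either a direct appeal to Lemmas~\ref{lemma:iso} and~\ref{lemma:inf} or a finiteness count.
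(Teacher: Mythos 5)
Your proposal is correct and runs on the same engine as the paper's proof: the infinitely many stages at which $c_{m,n}$ hands out a chip of value $\le N'$ are non-chip stages for the $(m',n',k')$-procedure, at which that procedure is forced to keep $G_m$ and $G_n$ even with respect to powers of $p_{m',n',k'}$. The differences are organizational. The paper does not split into the cases ``infinitely many updates'' versus ``stabilizes,'' and it does not need your general claim about all pairs $j\in J_l$: it tracks only the single pair $(m,n)$, observing that at each stage $s+1$ with $c_{m,n}(s)=k$ (for the least $k$ receiving infinitely many chips, your $x_0$) one has $t_n=s\in\dom{c_{m,n}}$, so whether or not the comparison condition fires, the stage ends with $G_{m,s+1}$ and $G_{n,s+1}$ agreeing on the current key power; since by the construction's invariant the groups can only ever disagree at the key power, agreement at cofinally many stages yields agreement on every power of $p_{m',n',k'}$, with no stabilization dichotomy required. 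Relatedly, the step you expected to fight with is in fact immediate and needs no induction along $\prec$: if $j\in J_l$, then at any stage $s+1$ after stabilization at which $c_{j,l}(s)\le N'$, the definition of $t_l$ gives $t_l=s\in\dom{c_{j,l}}$, so non-firing of the comparison condition at that stage directly compares $G_{j,s}$ with $G_{l,s}$ at the final key exponent $r$, and since all final heights are $r-1$ or $r$, this is exactly equality of final heights. Competition among several $j\in J_l$ is a non-issue, because you need only wait for a stage at which $c_{j,l}$ itself awards the most recent small chip.

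One genuine (but easily repaired) slip: in your closing case analysis you only consider collisions of $m$ with $m'$ or $n'$ and assert that in every case $\ell=n\neq n'$. This fails when $n=n'$, and when $n=m'$ your key claim does not apply with $l=n$ at all, since it requires $l\notin\{m',n'\}$. In those cases apply the claim with $l=m$ and $j=n$: then $n\prec m$ in the $(m',n',k')$-order, $m\notin\{m',n'\}$ (the swap $\{m,n\}=\{m',n'\}$ being impossible because $m<n$ and $m'<n'$), and $c_{n,m}=c_{m,n}$ still awards infinitely many chips of value $\le N'$. (The paper's own write-up glosses this same corner case.) The remaining ingredients---the type criterion for rank-$1$ groups, the treatment of the primes $p_{m,n,i}$, and discarding the finitely many triples with $n'+k'<\max(m,n,x_0)$---match the paper's argument.
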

\begin{pf}
For every $m'<n'$ and every $i$, Lemma \ref{lemma:iso} shows that
$$ p_{m',n',i}^{-\infty}\in G_m \iff p_{m',n',i}^{-\infty}\in G_n.$$
In particular, if $k$ is the least such that $c_{m,n}^{-1}(k)$ is infinite,
then for every $i\geq k$, $p_{m,n,i}^{-\infty}$ lies in both $G_m$ and $G_n$.
For the remaining finitely many $i$, there will be powers $r$ (likely distinct for different $i$)
such that $p_{m,n,i}^{-r}$ lies in $G_n$ but not $G_m$ (assuming without loss of generality
that $m<n$).  Recall that, to show $G_m\cong G_n$, we need to show that there are only
finitely many prime powers $p^{-r}$ that lie in one of $G_m$ and $G_n$
but not in the other, so these finitely many values $i<k$ do not upset us.
(The initial result made it clear that $p_{m,n,i}^{-\infty}$ lies in neither $G_m$ nor $G_n$,
so these $i$ really do yield only finitely many such prime powers.)

However, there are many more primes to be considered.  We claim that
for those primes $p_{m',n',k'}$ with $(m',n')\neq(m,n)$ and $n'+k'=N'\geq \max(n,k)$,
each power $p_{m',n',k'}^r$ will lie in $G_m$ if and only if it lies in $G_n$. 
Recall our convention that $m'<n'$ in all these triples, so there
are only finitely many triples $(m',n',k')$ with $n'+k'<\max(n,k)$.
Consequently, this claim, once proven, will suffice to show that $G_m\cong G_n$.

The claim holds because there are infinitely many $s$ with $c_{m,n}(s)=k$.
Each such stage $s+1$ is a non-chip stage in the procedure for $(m',n',k')$,
because $\dom{c_{m,n}}\cap\dom{c_{m',n'}}=\emptyset$ when $(m,n)\neq(m',n')$.
With $m<n\leq N'$ and $k\leq N'$, the procedure for $(m',n',k')$ at 
stage $s+1$ will set $t_n=s$, the greatest stage $\leq s$ at which
some $j\prec n$ (namely $m$) has $c_{j,n}(s)\leq N'$.
Using the key exponent $r$ at this stage, this procedure will ask whether
$$(p_{m',n',k'}^{-r}\in G_{m,s} \iff p_{m',n',k'}^{-r}\in G_{n,s}).$$
If not, it will increase the key power by $1$, to $r+1$, and ensure that 
$$(p_{m',n',k'}^{-(r+1)}\in G_{m,s} \iff p_{m',n',k'}^{-(r+1)}\in G_{n,s}).$$
(Whether this power is in both these sets or out of them both depends on $t_m$;
this is irrelevant to our argument here.)
If $G_{m,s}$ and $G_{n,s}$ were ``even'' with respect to this prime,
it is still possible that the procedure will increase the key power on account
of $t_l$ for some other $l\leq N'$, but even if it does so, it will still keep
$$(p_{m',n',k'}^{-(r+1)}\in G_{m,s} \iff p_{m',n',k'}^{-(r+1)}\in G_{n,s}).$$
Since this holds at infinitely many stages $s$ (namely, those in $c_{m,n}^{-1}(k)$),
it is clear that for every power $r$, $p_{m',n',k'}^{-r}$ lies in $G_m$
if and only if it lies in $G_n$.  As noted above, this completes the proof.
\qed\end{pf}
By Lemma \ref{lemma:noniso} and \ref{lemma:final}, our procedure $\Phi$
does indeed compute a countable reduction from $E$ to $\TFAb_1$.
\qed\end{pf}

\begin{cor}
Uniformly for each $d>0$,
there is a computable countable reduction from the space $\TD_d$ of fields
of transcendence degree $d$ over $\Q$ to the space $\TFAb_1$ of torsion-free
abelian groups of rank $1$.
\end{cor}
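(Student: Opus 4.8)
The plan is to deduce this corollary at once from Theorem~\ref{tfab-sigma3}. The only input needed is that, for each $d>0$, the isomorphism relation $\cong$ on $\TD_d$ is a $\Sigma^0_3$ equivalence relation on the subspace $\TD_d\subseteq 2^\omega$, with a presentation uniform in $d$. This is already recorded in Section~\ref{sec:intro} and restated at the start of Section~\ref{sec:fieldstogroups}: two fields $E,F\in\TD_d$ are isomorphic iff each has an algebraically independent $d$-tuple realizing the same existential type, and this is expressed by a computable $\Sigma_3$ formula in $E\oplus F$. Unwinding that formula, and contracting quantifiers into the normal form $\exists x\,\forall y\,\exists z$, presents $\cong$ on $\TD_d$ as $\exists x\,\forall y\,\exists z\,R_d(E,F,x,y,z)$ with $R_d$ computable; moreover an index for $R_d$ can be produced computably from $d$, since $d$ enters only as a numerical bound on the length of the tuples involved. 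So the family $(R_d)_{d>0}$ is uniform.

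Next I would apply Theorem~\ref{tfab-sigma3} with $E$ instantiated as $\cong$ on $\TD_d$. The theorem then supplies a Turing functional $\Phi_d$ which, on oracle $F_0\oplus F_1\oplus\cdots$ with each $F_n\in\TD_d$, outputs $G_0\oplus G_1\oplus\cdots$ with each $G_n\in\TFAb_1$ and $(\forall n<m)\,[F_m\cong F_n\iff G_m\cong G_n]$. This is exactly a computable countable reduction of $\cong$ on $\TD_d$ to $\cong$ on $\TFAb_1$, which is the assertion of the corollary for that fixed $d$.

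For the uniformity in $d$, I would inspect the construction in the proof of Theorem~\ref{tfab-sigma3}: it depends on the input equivalence relation only through an index for the computable matrix $R$. The chip functions $c_{m,n}$ are built uniformly from that index (awarding infinitely many chips to $(m,n,k)$ exactly when $\exists x\le k\,\forall y\,\exists z\,R(A_m,A_n,x,y,z)$), while the enumeration of primes $p_{m,n,k}$, the orders $\prec$, and the stagewise assignment of powers into the $G_l$ are all fixed independently of the relation. Feeding in the index for $R_d$ from the first paragraph therefore yields $\Phi_d$ uniformly in $d$, giving the stated uniform reduction. I do not expect a genuine obstacle here; the one point requiring a moment's care is making sure the effective $\Sigma_3$ bound on isomorphism for $\TD_d$ is read as an honest $\Sigma^0_3$ predicate on the product space, rather than merely a computable $\Sigma_3$ axiomatization of the \emph{class} of isomorphic pairs, but this is immediate from the explicit formula just described.
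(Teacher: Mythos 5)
Your proposal matches the paper's proof: both observe that isomorphism on $\TD_d$ is uniformly $\Sigma^0_3$ (via the existence of algebraically independent $d$-tuples realizing the same existential type) and then invoke Theorem~\ref{tfab-sigma3}. Your extra remarks about tracing the uniformity in $d$ through the chip-function construction are a correct and slightly more explicit justification of the word ``uniformly,'' which the paper leaves implicit, but the argument is essentially identical.
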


\begin{proof}
Note that the isomorphism problem on $\TD_d$ is uniformly $\Sigma^0_3$ via
\begin{align*}
F \cong E \Leftrightarrow \exists \bar a\in F, \bar b \in E~[ & (\text{$\bar a$ is an algebraically independent $d$-tuple})\\
&\wedge(\text{$\bar b$ is an algebraically independent $d$-tuple})\\
&\wedge(\text{$\bar a\mapsto \bar b$ extends to an isomorphism $F \to E$})].
\end{align*}
Thus, by the previous Theorem \ref{tfab-sigma3}, there is a uniform computable countable reduction from $\TD_d$ to $\tfa_1$. 
\end{proof}

Since the isomorphism relation on $\TFAb_r$ is similarly $\Sigma^0_3$, uniformly
in $r$, we have a similar corollary for groups, which contrasts with Theorem
\ref{thm:HjorthThomas} of Hjorth and Thomas.

\begin{cor}
Uniformly for each $r\geq 0$,
there is a computable countable reduction from the space $\TFAb_r$ of torsion-free
abelian groups of rank $r$ to the space $\TFAb_1$.
\end{cor}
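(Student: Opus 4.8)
The plan is to deduce this directly from Theorem~\ref{tfab-sigma3}, once we check that isomorphism on $\TFAb_r$ is a $\Sigma^0_3$ equivalence relation on a subspace of $2^\omega$, uniformly in $r$. For $G,H\in\TFAb_r$ we have
\begin{align*}
G\cong H \Leftrightarrow \exists\,\bar a\in G,\ \bar b\in H~\bigl[ & (\text{$\bar a$ is a linearly independent $r$-tuple})\\
&{}\wedge(\text{$\bar b$ is a linearly independent $r$-tuple})\\
&{}\wedge(\text{$\bar a\mapsto\bar b$ extends to an isomorphism $G\to H$})\bigr].
\end{align*}
Being a linearly independent $r$-tuple is $\Pi^0_1$ (the conjunction, over integer vectors $(m_1,\ldots,m_r)\neq 0$, of the quantifier-free conditions $m_1a_1+\cdots+m_ra_r\neq 0$), while the last conjunct asserts, for every $\Q$-linear combination $\lambda$, that $\lambda(\bar a)$ is present in $G$ if and only if $\lambda(\bar b)$ is present in $H$. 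Here ``$\lambda(\bar a)$ is present'' unpacks, for $\lambda=(m_1/m,\ldots,m_r/m)$, to the $\Sigma^0_1$ statement that some domain element $x$ satisfies $m\cdot x=m_1a_1+\cdots+m_ra_r$; a biconditional of $\Sigma^0_1$ statements lies in $\Delta^0_2$, and the universal quantifier over $\lambda$ makes the whole matrix $\Pi^0_2$, so the displayed characterization is $\Sigma^0_3$. Every enumeration involved (of integer vectors, of $\Q$-linear combinations, of the arity $r$) is computable uniformly in $r$, so we obtain a $\Sigma^0_3$ index for isomorphism on $\TFAb_r$ uniformly in $r$. (For $r=0$ the tuples are empty and the relation degenerates to the trivial relation on $\TFAb_0$, which is of course $\Sigma^0_3$.)

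First I would note that this characterization exhibits isomorphism on $\TFAb_r$ as an equivalence relation on the subspace $\TFAb_r\subseteq 2^\omega$ --- reflexivity, symmetry, and transitivity are immediate. Then I would apply Theorem~\ref{tfab-sigma3}, which says that every $\Sigma^0_3$ equivalence relation on a subspace of $2^\omega$ is computably countably reducible to $\TFAb_1$; running the proof of that theorem on the uniform $\Sigma^0_3$ index just produced yields a Turing functional $g$ with $g(G_0,G_1,\ldots)=(\Phi(G_0),\Phi(G_1),\ldots)\in(\TFAb_1)^\omega$ realizing the reduction, and this functional is obtained uniformly from $r$.

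There is essentially no obstacle here beyond bookkeeping; the one point worth stating explicitly is the uniformity in $r$, which is clear since $r$ enters only through the arity of $\bar a,\bar b$ and through the uniformly computable enumerations used above. This stands in sharp contrast with Theorem~\ref{thm:HjorthThomas}: while there is no Borel reduction from $\TFAb_{r+1}$ to $\TFAb_r$, under computable countable reducibility all of the $\TFAb_r$ collapse onto $\TFAb_1$.
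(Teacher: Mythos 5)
Your proposal is correct and follows essentially the same route as the paper: it verifies that isomorphism on $\TFAb_r$ is (uniformly in $r$) an effective $\Sigma^0_3$ equivalence relation, witnessed by the existence of linearly independent $r$-tuples realizing the same set of present $\Q$-linear combinations, and then invokes Theorem~\ref{tfab-sigma3}. The paper's own justification is exactly this observation (cf.\ the $\Sigma_3$ description of isomorphism in the introduction and the parallel corollary for $\TD_d$), so there is nothing further to add.
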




\end{document}